\newtheorem{theorem}{Theorem}[section]
\newtheorem{lemma}[theorem]{Lemma}
\newtheorem{proposition}[theorem]{Proposition}
\newtheorem{corollary}[theorem]{Corollary}
\newtheorem{example}[theorem]{Example}
\newtheorem{question}[theorem]{Question}
\newtheorem{condition}[theorem]{Condition}
\theoremstyle{remark}
\newtheorem{remark}[theorem]{Remark}
\numberwithin{equation}{section}
\newcommand{\R}{\mathbb{R}}
\newcommand{\Z}{\mathbb{Z}}
\def\P{\mathbb{P}} 
\newcommand{\E}{\mathbb{E}}
\newcommand{\id}{\mathds{1}}
\newcommand{\eps}{\varepsilon}
\newcommand{\cross}{\text{\textup{Cross}}}
\newcommand{\arm}{\text{\textup{Arm}}}
\DeclareMathOperator*{\esssup}{ess\,sup}
\begin{document}

\title[Percolation of the excursion sets of planar symmetric shot noise fields]{Percolation of the excursion sets of \\ planar symmetric shot noise fields}
\author{Raphael Lachieze-Rey$^1$}
\address{$^1$Universit\'{e} Paris Descartes / Universit\'e de Paris}
\email{raphael.lachieze-rey@parisdescartes.fr}
\author{Stephen Muirhead$^2$}
\email{smui@unimelb.edu.au}
\address{$^2$School of Mathematical Sciences, Queen Mary University of London (Current address: School of Mathematics and Statistics, University of Melbourne)}
\begin{abstract}
We prove the existence of phase transitions in the global connectivity of the excursion sets of planar symmetric shot noise fields. Our main result establishes a phase transition with respect to the level for shot noise fields with symmetric log-concave mark distributions, including Gaussian, uniform, and Laplace marks, and kernels that are positive, symmetric, and have sufficient tail decay. Without the log-concavity assumption we prove a phase transition with respect to the intensity of positive marks.
\end{abstract}
\thanks{The authors would like to thank Michael McAuley and Hugo Vanneuville for helpful discussions, and an anonymous referee for detailed feedback that greatly improved the presentation of the paper.}
\date{\today}

\maketitle

{\bf Keywords:} Percolation, excursion sets, shot noise fields, phase transition

{\bf AMS:}  60G60, 60K35

\section{Introduction}

Let $g(x) \in L^1(\mathbb{R}^2)$ be a continuous function and let $\mu$ be a distribution on~$\mathbb{R}$ with finite mean. The \textit{planar shot noise field} with \textit{kernel} $g$ and \textit{mark distribution} $\mu$ is defined as 
\begin{equation}
\label{e:ssn}
f(x) = \sum_{i \in \mathcal{P}} Y_i \, g(x-i)
\end{equation}
where $\mathcal{P}$ is a Poisson point process on $\mathbb{R}^2$ with unit intensity, and $(Y_i)_{i \in \mathcal{P}}$ are i.i.d.\ random variables drawn from the distribution $\mu$; the sum in \eqref{e:ssn} is well-defined almost surely by the integrability of $g$ and $\mu$. For example, one could take $g(x) = (1+|x|)^{-\alpha}, \alpha > 2$, and $\mu$ to be a normal distribution, a uniform distribution, or the Rademacher distribution. If $\mu$ is symmetric we call $f$ a \textit{planar symmetric shot noise field}.

\smallskip
The shot noise field in \eqref{e:ssn} is the Euclidean case of a general abstract class of infinitely divisible fields obtained by convoluting a kernel over a Poisson random measure. Such fields were introduced over a century ago by Campbell \cite{campbell} to model thermionic noise, and since then have been used, under several different names, in diverse fields such as image analysis \cite{BieDes,BieDesEuler} and telecommunications networks \cite{Bac10,BB15}. In the latter case, the points of the underlying Poisson process can be seen as emitters of an electromagnetic signal, while the field represents the total signal at every location of the space; see \cite{Bac10} for a detailed mathematical study of theory and applications of such models. At high frequency, i.e.\ when the density of points is high compared to the scale of the kernel, shot noise fields are also a good approximation of Gaussian fields with the same covariance structure. 

\smallskip
In this paper we are interested in the global connectivity of the (upper-)excursion sets
\[  \{f +\ell \ge 0\}  = \{ x \in \mathbb{R}^2 : f(x) + \ell \ge 0 \}  \ , \quad \ell \in \mathbb{R} , \]
of planar symmetric shot noise fields. In the analysis of telecommunications networks, the percolative properties of $ \{f + \ell \ge 0\} $ are of high importance in analysing the connectivity of the network \cite{BB15}. More generally, the geometric properties of $ \{f + \ell \ge 0\} $ have been the focus of many other studies \cite{BieDesEuler,Lr19,BST}. 

\smallskip By analogy with other planar percolation models (e.g.\ Bernoulli percolation on the square lattice \cite{har60,kes80}, level set percolation of planar stationary Gaussian fields \cite{alex96,bg17, rv17b,mv18}), it is natural to expect that the connectivity of $ \{f + \ell \ge 0\} $ undergoes a phase transition in the level $\ell$, from a subcritical phase $\ell < \ell_c$ in which all the components are bounded, to a supercritical phase $\ell > \ell_c$ in which $ \{f + \ell \ge 0\} $ has a unique unbounded connected component. It is furthermore natural to expect that $\ell_c = 0$ since the excursion sets are \textit{self-dual} at this level (i.e.\ $ \{f \ge 0\} $ is equal in law to $\{f \le 0 \}$). Our main result verifies this under general conditions on the kernel $g$ and symmetric mark distribution~$\mu$.

\subsection{The phase transition with respect to the level}
Let us begin by introducing the assumptions under which we work (see Section \ref{s:optimal} for remarks on their optimality). We say that a distribution $\mu$ has a \textit{sub-exponential tail} if there exists a $c > 0$ such that $\mu ( (-\infty,-x] \cup [x, \infty )) < e^{-cx}$ for sufficiently large $x$.
\begin{condition} $\,$
\label{c:1}
\begin{itemize}
\item (Mark) The mark distribution $\mu$ is symmetric and has a sub-exponential tail.
\item (Kernel; depends on a parameter $\alpha > 3$) The kernel $g$ is positive, is in $C^2(\mathbb{R}^2)$, and is symmetric with respect to reflection in the coordinate axes and rotation by $\pi/2$. Moreover, there exists a parameter $\alpha >3$ and a constant $c > 0$ such that, for every multi-index $k$ such that $|k| \le 2$ and $x \in \R^2$, 
\[ |\partial^{k} g(x)| < c(1+|x|)^{-\alpha-|k|} .\]
\end{itemize}
\end{condition}

Among other things, Condition \ref{c:1} implies certain regularity properties of the shot noise field $f$. In particular, since $g$ is in $C^2(\R^2)$, $f$ can be viewed as an (almost surely) absolutely convergent series in the Sobolev space $W^{2,1}$ on compact sets, which ensures that the sample paths of $f$ are (almost surely) almost everywhere twice differentiable. Then, since $x\mapsto \sup_{y:|y-x| \le 1} |\partial^{k} g(y)|$ is integrable for $|k|\le 2$,  $f$ is actually $C^2$-smooth \cite[Proposition 2.2.3]{Bac10a}. 

\smallskip
On the other hand, these conditions do not on their own guarantee that $f$ and its derivatives have bounded density. We shall impose this as an additional condition:
\begin{equation}
\label{a:bd}
\tag{BD}
\text{The random vector $(f(0),\nabla f(0))$ has a bounded density.}
\end{equation}
An obvious necessary condition for \eqref{a:bd} is that $\mu$ is non-zero and $g$ has unbounded support.

\smallskip
Finally, we introduce a crucial log-concavity assumption:
\begin{equation}
\label{a:lc}
\tag{LC}
\text{The mark distribution $\mu$ is log-concave, i.e.\ its density exists and is log-concave.}
\end{equation}

\noindent If $\mu$ is symmetric, then $\eqref{a:lc}$ implies that $\mu$ has a sub-exponential tail (Lemmas \ref{l:mr} and \ref{l:ed})

\smallskip
We are now ready to state our main result, establishing the existence of a phase transition in the connectivity of the excursion sets with respect to the level:

\begin{theorem}
\label{t:main1}
Suppose that $f$ satisfies Condition \ref{c:1} and~\eqref{a:bd}. Then
\begin{enumerate}
\item For every $\ell \le 0$, $ \{f + \ell \ge 0\} $ contains only bounded connected components almost surely. In particular, for every $\ell \in \mathbb{R}$, the $\ell$-level lines (i.e.\ the connected components of $\{f = \ell \}$) are almost surely bounded. 
\end{enumerate}
Suppose in addition \eqref{a:lc} holds. Then
\begin{enumerate}\addtocounter{enumi}{1}
\item For every $\ell > 0$, $ \{f + \ell \ge 0\} $ contains a unique unbounded connected component almost surely.
\end{enumerate}
Moreover, under Condition \ref{c:1} and~\eqref{a:bd} we have the following quantitative estimates on the connectivity of $\{f \ge 0 \}$ (formal statements are given in Section \ref{s:zero}):
\begin{enumerate}\addtocounter{enumi}{2}
\item For all $\rho >0$ there are $0<c_{-} \le c_{+}<1 $ such that, for all $R \ge 1$,
\begin{align*}
c_{-} \le \mathbb{P} \big( \{f \ge 0\} \text{\rm{ crosses horizontally the rectangle }}[0,R]\times [0,\rho R] \big) \le c_{+}.
\end{align*}
\item There are $c,c_{\text{\rm{Arm}}}>0$ such that, for all $1 \le r \le R$,
\begin{align*}
\mathbb{P} \big( \{ f \ge 0 \} \text{\rm{ connects $B(r)$ to $\partial B(R)$}} \big) \le c\left(
\frac{r}{R} \right)^{c_{\text{\rm{Arm}}}}
\end{align*}
where $B(r)$ denotes the closed ball of radius $r > 0$ centred at the origin.
\end{enumerate}
\end{theorem}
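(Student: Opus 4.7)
The plan is to run the classical planar percolation programme. The core ingredient is a Russo--Seymour--Welsh (RSW) box-crossing estimate for $\{f \ge 0\}$. Since $f$ is not finitely dependent --- $g$ has only polynomial decay --- I would first approximate $f$ by a truncated field $f_L$ receiving contributions only from the Poisson points within distance $L$ of the box, so that $f_L$ is genuinely $O(L)$-dependent, and use Condition~\ref{c:1} together with the sub-exponential mark tail to show that $\|f - f_L\|_\infty$ on a bounded region is small with high probability. The assumption \eqref{a:bd} lets sign events for $f$ be compared with those for $f_L$. The upper bound $c_{+} < 1$ is then essentially a duality fact: by the symmetry of $\mu$ and the $\pi/2$-symmetry of $g$, a horizontal crossing of $[0,R]^2$ by $\{f \ge 0\}$ and a vertical crossing by $\{f \le 0\}$ are equiprobable and incompatible. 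The lower bound $c_{-} > 0$ is the substantive RSW step, which I would run on $f_L$ using FKG for the i.i.d.\ marks (positivity of $g$ makes crossings monotone in each mark) together with a gluing argument, and then transfer back to $f$ via the truncation estimate.

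\textbf{Parts (1) and (4).} Given the crossing estimates, absence of an unbounded component of $\{f \ge 0\}$ follows from a Zhang-type argument: if $\{f \ge 0\}$ had an unbounded component with positive probability then by symmetry so would $\{f \le 0\}$, and FKG combined with the crossing upper bound $c_{+} < 1$ on a large annulus then produces a contradiction of the usual four-arm type. Monotonicity in $\ell$ upgrades this to all $\ell \le 0$, since $\{f \ge -\ell\} \subset \{f \ge 0\}$ when $\ell \le 0$. The statement on $\ell$-level lines reduces to observing that $\{f = \ell\} \subset \{f \ge \ell\} \cap \{f \le \ell\}$, and both of the latter sets have only bounded components for any $\ell$ (using symmetry of $f$ to cover positive and negative levels). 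Part (4) is then a routine RSW iteration: combining annulus-crossing estimates and FKG on dyadic scales yields a polynomial bound $(r/R)^{c_{\text{\rm{Arm}}}}$, with the exponent extracted from~$c_{-}$.

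\textbf{Supercriticality (part (2)) and main obstacle.} To obtain an unbounded component for $\ell > 0$ under \eqref{a:lc}, I would derive a sharp-threshold differential inequality for the map $\ell \mapsto \P\bigl(\{f+\ell \ge 0\}\text{ crosses }[0,R]\times[0,\rho R]\bigr)$, of Russo/OSSS type. Log-concavity of $\mu$ enters through Pr\'ekopa--Brascamp--Lieb-type inequalities, which provide quantitative influence bounds for the continuous i.i.d.\ marks; positivity of $g$ supplies the required monotonicity. Combining such an inequality with the crossing lower bound of (3) and a multi-scale bootstrap forces crossing probabilities to tend to one on all scales for every $\ell > 0$, after which a renormalisation argument produces an unbounded component and uniqueness follows from a Burton--Keane argument using ergodicity of the driving Poisson process. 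I expect the two main obstacles to be the RSW step in the truncated setting --- where the absence of Gaussian comparison tools forces a quasi-independence argument tailored to shot noise --- and extracting a usable quantitative influence inequality from log-concavity so as to run the sharp-threshold step in the continuum Poisson setting.
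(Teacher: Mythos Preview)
Your programme is essentially the paper's --- truncate to a finite-range field, positive association from $g\ge 0$, RSW at the self-dual level, then an OSSS/Russo sharp-threshold step --- but several implementation choices differ. For RSW the paper applies Tassion's criterion directly to $f$, verifying quasi-independence by comparing $f$ with the truncated $f_r$ on the high-probability event $\|f-f_r\|_\infty$ small and using \eqref{a:bd} (via a critical-point-in-a-window estimate) to show crossing events are continuity sets; this sidesteps running RSW on $f_L$ and transferring. For (1) the paper does not run a Zhang argument but simply deduces absence of percolation from the one-arm decay (4), which is proved first by iterating RSW plus quasi-independence over dyadic annuli. For (2) the use of log-concavity is narrower and more elementary than Pr\'ekopa--Brascamp--Lieb: the paper only needs that a symmetric log-concave law has bounded Mills ratio $\bar F_\mu(x)/\mu(x)$, which is precisely what converts the derivative of a crossing probability in a shift of each mark into a lower bound by the resampling influence of that mark (the threshold-crossing picture); this then feeds into OSSS with a randomised exploration algorithm as you anticipate. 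Finally, uniqueness is obtained by planar duality rather than Burton--Keane: two unbounded components of $\{f+\ell\ge 0\}$ would be separated by an unbounded component of $\{f+\ell<0\}$, contradicting (1) applied to $-f$. Your route would also work, but the paper's choices are each a bit shorter: Tassion packages the RSW step, arm-decay gives (1) for free, Mills ratio is the minimal consequence of \eqref{a:lc} actually needed, and the duality argument for uniqueness avoids the amenability machinery.
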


Let us give some concrete examples of shot noise fields to which Theorem \ref{t:main1} applies:

\begin{example}
If one takes the kernels
\[  g(x) = (1 + |x|^{2})^{-\alpha/2} , \ \alpha > 3 ,  \qquad \text{or} \qquad g(x) = \exp( -  ( 1+|x|^{2}) ^{\beta/2}) , \  \beta \in (0, 1) , \]
then the conclusions of Theorem \ref{t:main1} hold for any symmetric log-concave mark distribution (e.g.\ a centred normal, uniform, or Laplace distribution).
\end{example}
Since symmetric log-concave distributions have a sub-exponential tail (Lemmas \ref{l:mr} and \ref{l:ed}), it is immediate that Condition \ref{c:1} is satisfied by these examples, and see Appendix \ref{sec:density-power} for a proof that \eqref{a:bd} is satisfied.

\begin{remark}
\label{rk:kernel-motiv}
These examples are chosen as  the most natural parametric families of smooth kernels used in applications, and   also illustrate the boundaries of the validity of the assumptions, i.e.\ when the kernel decays too slowly and does not offer sufficient asymptotic independence ($ ( 1+ | x |^{2}) ^{-\alpha /2}$ for $\alpha \le 3$) or too fast to have bounded density ($\exp(- (1+ | x |^{2} )^{\beta/2 })$ for $\beta \ge 1$). The proof  in Appendix \ref{sec:density-power} can be applied to kernels  of the form $h( | x | )$ if the decays of $h$ and $h'$ are not too fast or not too slow, as above.
\end{remark}

\subsection{The phase transition with respect to the intensity of positive marks}
The log-concavity assumption \eqref{a:lc} is not satisfied in many examples of interest, such as the case of Rademacher marks ($\mu = (\delta_{-1} + \delta_1)/2$), and so Theorem \ref{t:main1} does not settle whether $\ell_c = 0$ in these examples.

\begin{question}
\label{q:1}
Does $ \{f + \ell \ge 0\} $ contain a unique unbounded connected component almost surely for every $\ell > 0 $ assuming only Condition \ref{c:1} and \eqref{a:bd}?
\end{question}

Nevertheless, the arguments we use to establish Theorem~\ref{t:main1} do imply the existence of a phase transition at $\ell = 0$ with respect to \textit{increasing the intensity of positive marks}. This holds for arbitrary mark distribution, and requires only slightly stronger assumptions on the decay of the kernel.

\smallskip
To state this result, observe that we can represent the shot noise field $f$ in \eqref{e:ssn} as
\[  f(x) \stackrel{d}{=} \sum_{i \in \mathcal{P}}  \Big( |Y_i| \id_{\{ \mathcal{U}_i \le 1/2 \}}  -  |Y_i|  \id_{\{ \mathcal{U}_i \ge  1/2 \}} \Big) g(x-i)   \]
where $(\mathcal{U}_i)_{i \in \mathcal{P}}$ are independent $[0,1]$-uniform random variables. Then for $\eta \in \mathbb{R}$ define the shot noise field
\begin{equation}
\label{e:feta}
 f^\eta(x) = \sum_{i \in \mathcal{P}} \Big( |Y_i| \id_{\{ \mathcal{U}_i \le 1/2 + \eta  \}}  -  |Y_i|  \id_{\{ \mathcal{U}_i \ge  1/2 + \eta \}} \Big) g(x-i) .
 \end{equation}
By the thinning property of Poisson point processes, $\eta$ can be viewed as parameterising the \textit{intensity of the positive marks}. If we assume $g \ge 0$, then $f^\eta$ is increasing in $\eta$.

\begin{theorem}
\label{t:main2}
Suppose that $f$ satisfies Condition \ref{c:1} for a parameter $\alpha > 4$. Moreover, suppose there exists $\eta_0 > 0$ such that \eqref{a:bd} holds for the field $f^\eta$ for all  $\eta \in [0,\eta _{0}]$. Then:
\begin{enumerate}
\item For $\eta \le 0$, $ \{f^\eta \ge 0\} $ contains only bounded connected components almost surely. 
\item For $\eta > 0$, $ \{f^\eta \ge 0\} $ contains a unique unbounded connected component almost surely.
\end{enumerate}
\end{theorem}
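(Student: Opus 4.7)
Part (1) is an immediate consequence of Theorem \ref{t:main1}(1). At $\eta = 0$ the field $f^0$ coincides in law with the symmetric shot noise field $f$ of \eqref{e:ssn}, so Theorem \ref{t:main1}(1) applied at $\ell = 0$ yields that $\{f^0 \ge 0\}$ has only bounded components almost surely. For $\eta < 0$, the representation \eqref{e:feta} combined with $g \ge 0$ gives the pointwise coupling $f^\eta(x) \le f^0(x)$, whence $\{f^\eta \ge 0\} \subseteq \{f^0 \ge 0\}$ inherits the bounded-components property.

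For part (2), the plan is to upgrade the two-sided RSW crossing estimate of Theorem \ref{t:main1}(3) — valid at $\eta = 0$ — to the statement that horizontal crossing probabilities of $[0,R] \times [0, \rho R]$ by $\{f^\eta \ge 0\}$ tend to $1$ for any fixed $\eta > 0$, via a sharp threshold argument in the parameter $\eta$. The advantage of varying $\eta$ rather than the level $\ell$ (as in Theorem \ref{t:main1}(2)) is that the family $\{f^\eta\}_{\eta \in [0, \eta_0]}$ comes with a canonical monotone coupling: increasing $\eta$ by $d\eta$ flips each negative mark to a positive one independently by Bernoulli thinning of the uniforms $(\mathcal{U}_i)$. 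This sidesteps the log-concavity assumption used in Theorem \ref{t:main1}(2) to compare $f + \ell$ with a Gaussian-like perturbation, and reduces the problem to Bernoulli-percolation-style reasoning at the level of the signs of the marks.

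Concretely, writing $A_R$ for the event that $\{f^\eta \ge 0\}$ crosses $[0,R] \times [0, \rho R]$ horizontally, the sharp threshold step proceeds via a Russo-type differential identity
\[
\frac{d}{d\eta} \mathbb{P}(A_R) \;=\; \mathbb{E}\bigg[ \sum_{i \in \mathcal{P}} 2|Y^\mu_i| \, \mathds{1}_{\{i \text{ is sign-pivotal for } A_R\}} \bigg],
\]
combined with a lower bound on the right-hand side via an OSSS-type inequality for a randomized decision algorithm that reveals the Poisson configuration and marks box by box, along the lines developed for Gaussian fields in \cite{mv18, rv17b}. The output is a differential inequality
\[
\frac{d}{d\eta} \mathbb{P}(A_R) \;\ge\; c \, \psi(R) \, \mathbb{P}(A_R)(1 - \mathbb{P}(A_R)), \qquad \psi(R) \to \infty,
\]
where $\psi(R)$ is controlled by the one-arm estimate of Theorem \ref{t:main1}(4). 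Integrating from $0$ to any $\eta > 0$, with the RSW upper bound $c_+ < 1$ at $\eta = 0$ as initial data, forces $\mathbb{P}(A_R) \to 1$. The strengthened kernel decay $\alpha > 4$ enters precisely to quantify the spatial decorrelation needed by the OSSS algorithm, and requiring \eqref{a:bd} uniformly in $\eta \in [0, \eta_0]$ ensures that pivotality is well-behaved along the coupling.

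Once crossing probabilities of large rectangles tend to $1$, the existence of an unbounded connected component of $\{f^\eta \ge 0\}$ follows by a standard Borel-Cantelli argument over disjoint annular crossings, and uniqueness follows from a continuum Burton-Keane argument using ergodicity of the marked Poisson process. The main obstacle I expect is the OSSS step: setting up a suitable randomized algorithm for the continuum Poisson input and establishing the revealment bounds uniformly in $\eta \in [0,\eta_0]$, for marks of potentially unbounded support, is where the stronger kernel decay $\alpha > 4$ and the uniform hypothesis \eqref{a:bd} must do the real work.
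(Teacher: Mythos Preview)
Your overall strategy matches the paper's: part (1) by monotonicity in $\eta$ and the $\eta=0$ result, part (2) by a Russo-type derivative in $\eta$ combined with an OSSS inequality and the one-arm estimate at the zero level, integrated to force crossing probabilities to $1$. That is exactly Section~\ref{s:c2}.

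A few points where your sketch drifts from what actually has to happen:

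\textbf{The Russo formula.} The factor $2|Y^\mu_i|$ is spurious. Conditioning on everything but the sign at $i$, there is a threshold $\omega$ and the crossing occurs iff the mark exceeds $\omega$; differentiating in $\eta_i$ gives $\eps^2\,\mathbb{P}(|Y^\mu_i| > |\omega|)$ (in the discretised setting), with no $|Y^\mu_i|$ prefactor. The paper's Proposition~\ref{p:russo2} records this as $\partial^-_\eta\,\mathbb{P}(\cdot)\ge \tfrac12\sum I_i$ with $I_i$ the resampling influences.

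\textbf{Why $\alpha>4$.} It is not the OSSS algorithm that needs the extra decay. OSSS is applied to the truncated discretised field $f^{\eps,\eta}_r$, which is finite-range; no decorrelation is needed there. The stronger assumption enters when you transfer the conclusion from $f^\eta_r$ back to $f^\eta$. Because $f^\eta$ is \emph{not} symmetric for $\eta>0$, only the weaker truncation bound (first item of Lemma~\ref{l:trunc}, giving $r^{2-\alpha}$ rather than $r^{1-\alpha}$) is available, and you cannot absorb the error by a sprinkling in the level as in Section~\ref{s:c1}. Lemma~\ref{l:sprink2} makes this comparison directly, and needs $\alpha>4$ for the error $R^2 r^{2-\alpha}$ to be summable with $r$ a small power of $R$.

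\textbf{Why \eqref{a:bd} for $f^\eta$.} It is used not for pivotality but in that same comparison step: to invoke the level-line regularity (Proposition~\ref{p:regcrit}) for $f^\eta$, so that a small uniform perturbation does not change the crossing event.

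\textbf{Uniqueness.} Burton--Keane works, but the paper's argument is shorter: two unbounded components of $\{f^\eta\ge 0\}$ would be separated by an unbounded component of $\{f^\eta<0\}\subset\{f^{-\eta}\le 0\}$, contradicting part (1) applied to $-f^{-\eta}\stackrel{d}{=}f^\eta$ (equivalently, the already-established boundedness at the zero level).
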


\begin{example}
If one takes the kernels
\[  g(x) = (1 + |x|^{2})^{-\alpha/2} , \ \alpha > 4,  \qquad \text{or} \qquad g(x) = \exp( -  ( 1+|x|^{2}) ^{\beta/2}) , \  \beta \in (0, 1) , \]
then the conclusions of Theorem \ref{t:main2} hold for any (non-zero) mark distribution with a sub-exponential tail, and in particular any with compact support (e.g.\ a Rademacher distribution).
\end{example}

Condition \ref{c:1} is obviously satisfied by this example, and see Appendix \ref{sec:density-power} for a proof that \eqref{a:bd} is satisfied for $f^\eta$ for all $\eta \in \mathbb{R}$.

\begin{remark}
Instead of demanding that \eqref{a:bd} holds for $f^\eta$ for all $\eta \in [0, \eta_0]$, one could ask that \eqref{a:bd} holds for the \textit{thinned} shot noise field
\[    f^{(\lambda)}(x) =    \sum_{i \in \mathcal{P}}  Y_i \id_{\{ \mathcal{U}_i \le \lambda  \}}   g(x-i) \]
for some $\lambda < 1$, i.e.\ the symmetric shot noise defined as in \eqref{e:ssn} except with a Poisson point process of intensity $\lambda < 1$. This is since 
\begin{align*}
    f^{\eta}(x)   \stackrel{d}{=}   \sum_{i \in \mathcal{P}}  \Big( Y_i \id_{\{ \mathcal{U}_i \le 1 -  2\eta  \}}  +   |Y_i|  \id_{\{ \mathcal{U}_i \ge  1  - 2\eta \}}  \Big) g(x-i)  
\end{align*}
and hence, by the thinning property of Poisson point processes, $f^\eta \stackrel{d}{=} f^{(1-2\eta)} + \tilde{f}$ where $\tilde{f}$ is an independent shot noise field. Therefore, since convolution does not increase the sup-norm of a function, the density of $(f^\eta(0), \nabla f^\eta(0))$ is bounded by the density of $(f^{(1-2\eta)}(0), \nabla f^{(1-2\eta)}(0))$, and, by a further application of the thinning property, also bounded by the density of $(f^{(\lambda)}(0), \nabla f^{(\lambda)}(0))$ for any $\lambda \le 1 - 2\eta$.
\end{remark}

\subsection{Relationship to previous work}
Although all the statements in Theorems \ref{t:main1} and~\ref{t:main2} are new for planar shot noise fields, some of them could be deduced from previous work on related models. Indeed, the fact that $ \{f \ge 0\}$ has bounded connected components could be deduced from general results on dependent planar percolation models that satisfy positive associations \cite{gkr88, alex96}. The fact that $ \{f + \ell \ge 0\} $ has an infinite connected component at high levels $\ell \gg 1$ could also be deduced using the arguments of \cite{molchanov1983percolationii,bm17} (these works relate to the slightly different setting of non-negative shot noise fields, although the arguments go through). Hence the main novelty of Theorems \ref{t:main1} and \ref{t:main2} are (i) the statements that~$ \{f + \ell \ge 0\}$ (resp.\ $\{f^\eta \ge 0\}$) has an infinite connected component for every $\ell > 0$ (resp.\ $\eta > 0$), and (ii) the quantitative estimates for~$\{f \ge 0\}$. 

\smallskip
Our proof of Theorem \ref{t:main1} is largely inspired by \cite{mv18}, which proved an analogous result for planar Gaussian fields, following \cite{bg17,rv17b}. Nevertheless, since the marginals of shot noise fields are only accessible through their characteristic functions, and in general no expression is available for their density, many of the Gaussian techniques fail in the shot noise setting. Moreover, our emphasis on the role played by log-concavity of the mark distribution is novel (although, of course, the Gaussian distribution is log-concave), and deducing the phase transition at this level of generality requires more care. The phase transition in Theorem \ref{t:main2} (with respect to the intensity) is similar to that established for Poisson-Boolean percolation \cite{att18}, although we use different techniques.

\smallskip
Adapting other arguments in \cite{mv18}, one could extract more precise information on the phase transition than is contained in Theorem \ref{t:main1}. First, one could prove that the phase transition is \textit{sharp}, meaning that in the subcritical phase $\ell < 0$ the excursion set $ \{f + \ell \ge 0\} $ crosses large domains with exponentially small probability (see Remark \ref{r:sharp}). Second, one could prove that the `near-critical window' (see, e.g., \cite[Theorem 1.15]{mv18}) is polynomially small in the scale. For brevity we have not included these results.

\subsection{Optimality of the assumptions}
\label{s:optimal}
While the assumptions in Theorem \ref{t:main1} are more-or-less optimal for the proof, we do not expect them to be necessary for its conclusions, and examining the phase transition under more general conditions is an interesting direction for future work (see Question \ref{q:2} below).

\smallskip
Nevertheless, at least two assumptions are essential: (i) $\mu$ is symmetric, and (ii) $g$ has unbounded support. Indeed, if $\mu$ is not symmetric then the shot noise field is not \textit{self-dual} at the zero level, and so there is no reason to expect that $\ell_c = 0$ in general. On the other hand, it is likely the conclusions of Theorem \ref{t:main1} are true with respect to the critical level~$\ell_c$ (which depends on the field). Moreover, if $g$ has bounded support then the statements in Theorem~\ref{t:main1} about $ \{f \ge 0\} $ may be false. Indeed, the theory of Poisson-Boolean percolation (see \cite[Chapter 8]{bo06} or \cite[Chapter 3]{mr96}) demonstrates that $ \{f \ge 0\} $ contains an unbounded connected component whenever the support of $g$ is contained within a ball of sufficiently small radius. On the other hand, it is plausible that the conclusions of Theorem~\ref{t:main1} are true as soon as the support of $g$ contains a sufficiently large compact set.

\smallskip 
As for the assumption of log-concavity \eqref{a:lc}, while it plays an essential role in the proof of Theorem \ref{t:main1} (although we could also formulate a weaker version involving the boundedness of the \textit{Mills ratio}; see Section \ref{s:mills}) we do not believe it to be essential for the result. We use log-concavity to compare the effect of adding a constant to the mark distribution to the effect of \textit{resampling} this distribution; see Proposition~\ref{p:di}. 

\smallskip
Similarly, while the assumptions that the kernel $g$ is positive, symmetric, and decays rapidly are also essential to the proof, we do not believe them to be necessary for the result. We use $g \ge 0$ to guarantee that the shot noise field is increasing with respect to the marks, which among other things implies that the field is positively associated; see Proposition \ref{p:pa}. The symmetry of $g$ allows us to deduce box-crossing estimates at the zero level; see Proposition~\ref{p:sc} and Theorem~\ref{t:rsw}. Finally, the rapid decay of $g$ is used to compare $f$ to an approximation that is finite-range dependent (see Section \ref{ss:pert}). We require stronger decay in Theorem~\ref{t:main2} compared to Theorem \ref{t:main1} because `sprinkling' arguments are not available (c.f.\ Lemmas \ref{l:sprink} and \ref{l:sprink2}).

\smallskip
The remainder of the assumptions are mostly imposed for technical reasons.

\begin{question}
\label{q:2}
Do the conclusions of Theorems \ref{t:main1} and \ref{t:main2} hold for a continuous symmetric shot noise field \eqref{e:ssn} assuming only that the kernel $g$ has unbounded support?  What if the support of $g$ merely contains a sufficiently large compact set?
\end{question}

\subsection{Outline of the rest of the paper}
In Section \ref{s:prelim} we collect preliminary results, including a study of approximations of the shot noise field that have desirable properties. In Section~\ref{s:zero} we establish the `critical' nature of the zero level $\{f \ge 0\}$, including the quantitative estimates in Theorem~\ref{t:main1}. In Section \ref{s:c1} we complete the proof of the phase transition with respect to the level in Theorem~\ref{t:main1}. In Section \ref{s:c2} we show how the arguments in Section \ref{c:1} can be adapted to deduce the phase transition with respect to the intensity in Theorem~\ref{t:main2}. In Appendix \ref{s:a} we collect some technical results about shot noise fields, including concentration inequalities and verifying \eqref{a:bd} in concrete examples.

\medskip
\section{Preliminary results}
\label{s:prelim}

In this section we collect preliminary results on the shot noise field $f$, and various approximations of this field.

\subsection{Log-concavity and bounded Mills ratio}
\label{s:mills}

We first note that the log-concavity assumption \eqref{a:lc} implies that the \textit{Mills ratio} of the mark distribution $\mu$ is bounded; in fact this weaker condition could replace \eqref{a:lc} in all our results.

\smallskip
Recall that, for a continuous random variable, the \textit{Mills ratio} is defined as the ratio of the survival function to the density. Similarly, for a symmetric distribution $\mu$ define 
\begin{equation}
\label{e:mills}
 c_\text{Mills}(\mu) =  \esssup_{x \ge 0} \frac{\bar F_\mu(x)}{\mu(x)} \in [0, \infty] ,
 \end{equation}
where $\mu(x)$ is the density of $\mu$, $\bar F_\mu(x) =  \int_{s \ge x} d \mu(s)$ is the survival function, and where we use the conventions $1/0 = \infty$ and $0/0 = 0$. In particular, these conventions ensure that $c_\text{Mills}(\mu)$ is finite if $\mu$ has a density which is bounded away from zero on its support.

\begin{lemma} 
\label{l:mr}
If $\mu$ is symmetric and log-concave then $c_\text{Mills}(\mu) < \infty$.
\end{lemma}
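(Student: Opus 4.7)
The plan is to exploit the standard fact that for a log-concave density the hazard rate $\mu(x)/\bar F_\mu(x)$ is non-decreasing on the support, so that the Mills ratio $\bar F_\mu(x)/\mu(x)$ is non-increasing on the positive half-line; combined with symmetry this will pin the essential supremum at $x = 0$ and express it as $1/(2\mu(0))$.

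In more detail, let $I = (-a, a)$ with $a \in (0, \infty]$ denote the interior of the support of $\mu$ (symmetry forces $I$ to be a symmetric interval containing $0$, and $a > 0$ because $\mu$ has a density). On $I$ log-concavity gives $\mu(x) = e^{-\phi(x)}$ for some convex function $\phi : I \to \R$, and a standard result of Prékopa and Borell implies that $\mu$ can be taken continuous on $I$ (so the essential supremum of any continuous function coincides with the pointwise supremum over $I$). First I would write, for $x \in [0, a)$,
\[ \frac{\bar F_\mu(x)}{\mu(x)} = \int_0^{a - x} \frac{\mu(x+s)}{\mu(x)} \, ds = \int_0^{a-x} e^{-(\phi(x+s) - \phi(x))} \, ds, \]
and observe that for each fixed $s > 0$ convexity of $\phi$ makes $x \mapsto \phi(x+s) - \phi(x)$ non-decreasing. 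Monotone convergence/integration over $s$ then yields that $x \mapsto \bar F_\mu(x)/\mu(x)$ is non-increasing on $[0, a)$.

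Next I would evaluate this ratio at $x = 0$. Symmetry of $\mu$ gives $\bar F_\mu(0) = 1/2$, and log-concavity together with symmetry forces $\mu(0) = \max_{x} \mu(x) > 0$ (a symmetric log-concave density attains its maximum at the centre of symmetry, and cannot be identically $0$). Hence the monotonicity above yields
\[ \esssup_{x \in [0, a)} \frac{\bar F_\mu(x)}{\mu(x)} = \frac{\bar F_\mu(0)}{\mu(0)} = \frac{1}{2 \mu(0)} < \infty. \]
For $x \ge a$ both numerator and denominator vanish, and by the convention $0/0 = 0$ in \eqref{e:mills} such $x$ contribute $0$ to the essential supremum. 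Combining the two ranges gives $c_{\text{Mills}}(\mu) = 1/(2\mu(0)) < \infty$, as required.

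I do not anticipate a serious obstacle: the core inequality is the convexity of $\phi(x+s) - \phi(x)$ in $x$, which is elementary. The only care needed is at the boundary of the support and in the handling of the essential supremum, and these are dealt with by the continuity of log-concave densities on the interior of their support and by the $0/0 = 0$ convention. Note that the finiteness constant $1/(2\mu(0))$ also gives a clean quantitative version, should it be needed elsewhere in the paper.
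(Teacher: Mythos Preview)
Your proof is correct and follows essentially the same approach as the paper: both establish that the Mills ratio $\bar F_\mu(x)/\mu(x)$ is non-increasing on $[0,a)$ and then evaluate at $x=0$ using symmetry to get $\bar F_\mu(0)=1/2$ and $\mu(0)>0$. The only cosmetic difference is in how the monotonicity is derived: the paper invokes the preservation of log-concavity under marginalisation to conclude that $\bar F_\mu$ itself is log-concave (whence $-1/(\log\bar F_\mu)'=\bar F_\mu/\mu$ is non-increasing), whereas you write out the integral $\int_0^{a-x} e^{-(\phi(x+s)-\phi(x))}\,ds$ and use convexity of $\phi$ directly; your route is slightly more self-contained and also yields the explicit value $c_{\text{Mills}}(\mu)=1/(2\mu(0))$.
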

\begin{proof}
Since log-concavity is preserved under multiplication and marginalisation, the survival function $\bar{F}_\mu(x) = \int \id_{s \ge x} d\mu(s)$ is log-concave, and hence
\[ \frac{-1}{(\log \bar{F}_\mu(x))'} = \frac{\bar{F}_\mu(x)}{\mu(x)} \]
is non-increasing. Since the symmetry and log-concavity of $\mu$ imply that $\bar F_\mu(0) = 1/2$ and $\mu(0) > 0$, we deduce that $\bar F_\mu(x)/\mu(x)$ is bounded over $x \ge 0$.
\end{proof}

We also observe that if $\mu$ has bounded Mills ratio it has a sub-exponential tail:

\begin{lemma}
\label{l:ed}
If $\mu$ is symmetric and $c_\text{Mills}(\mu) < \infty$ then, for all $x \ge 0$,
\[ \bar F_\mu(x) \le F_\mu(0) e^{- x/c_\text{Mills}(\mu)} . \]
\end{lemma}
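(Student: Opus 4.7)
The plan is to convert the Mills ratio bound into a pointwise differential inequality for $\log \bar F_\mu$ and integrate it, i.e., a Gronwall-type argument.

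First I would note that since $c_\text{Mills}(\mu)<\infty$ forces $\mu$ to have a density (the convention $1/0=\infty$ would otherwise make the supremum infinite wherever $\bar F_\mu>0$), the survival function $\bar F_\mu$ is absolutely continuous on $[0,\infty)$ with $\bar F_\mu'(x)=-\mu(x)$ almost everywhere. The defining inequality $\bar F_\mu(x)/\mu(x)\le c_\text{Mills}(\mu)$ can then be rewritten, at every $x\ge 0$ with $\bar F_\mu(x)>0$, as
\[
\frac{d}{dx}\log \bar F_\mu(x) \;=\; -\frac{\mu(x)}{\bar F_\mu(x)} \;\le\; -\frac{1}{c_\text{Mills}(\mu)}.
\]

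Next I would integrate this on $[0,x]$. To handle the possibility that $\bar F_\mu$ vanishes at some finite point, I would set $x^{*}=\inf\{x\ge 0:\bar F_\mu(x)=0\}\in(0,\infty]$ and integrate only over $[0,x^{*})$, obtaining
\[
\log \bar F_\mu(x)-\log \bar F_\mu(0)\;\le\;-\frac{x}{c_\text{Mills}(\mu)},
\qquad 0\le x<x^{*},
\]
so that $\bar F_\mu(x)\le \bar F_\mu(0)\,e^{-x/c_\text{Mills}(\mu)}$ on this range; on $[x^{*},\infty)$ the bound is trivial since the left-hand side is $0$. Finally, the symmetry of $\mu$ yields $\bar F_\mu(0)=\mu([0,\infty))=\mu((-\infty,0])=F_\mu(0)$, which gives the stated inequality.

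There is no real obstacle here; the only care needed is the standard one of tracking where $\bar F_\mu$ can vanish and working with almost-everywhere derivatives, both addressed by the $x^{*}$ splitting above.
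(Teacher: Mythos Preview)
Your proposal is correct and follows essentially the same approach as the paper: both convert the Mills ratio bound into the inequality $(\log \bar F_\mu)'(x) \le -1/c_\text{Mills}(\mu)$ and integrate, with the paper phrasing the restriction as ``$x$ in the support of $\mu$'' where you use the cutoff $x^*$. Your version is slightly more explicit about the absolute continuity and the symmetry step $\bar F_\mu(0)=F_\mu(0)$, but the argument is the same.
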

\begin{proof}
It is sufficient to prove the result for $x \ge 0$ in the support of $\mu$, for which
\begin{equation*}
\log \bar F_\mu(x)   =   \log \bar F_\mu(0)  -  \int_{0}^x  \frac{\mu(s)}{  \bar F_\mu(s) }  ds   \le \log \bar F_\mu(0) -  x/c_\text{Mills}(\mu) . \qedhere
\end{equation*}
\end{proof}

\subsection{Approximations of the shot noise field}
\label{ss:pert}
We next introduce three approximations of the shot noise field $f$ that have desirable properties, based respectively on (i) truncating the kernel, (ii) spatial discretisation, and (iii) adding a constant to the mark. 

\smallskip
The first will be used to compare the field to one which is finite-range dependent. The second will be used, together with the first, to compare the field on compact sets to one which is measurable with respect to a finite set of independent random variables. The third will be used as a proxy for raising the mean of the field.

\subsubsection{Truncation of the kernel}
We first introduce approximations $(f_r)_{r > 0}$ of $f$ that are $r$-range dependent, meaning that $f_r(x)$ and $f_r(y)$ are independent for $|x-y| \ge r$. Fix an infinitely differentiable cut-off function $\chi : [0, \infty) \to [0, 1]$ such that $\chi(x) = 1$ for $x \le 1/4$, and $\chi(x) = 0$ for $x \ge 1/2$. For $r >0$ define the truncated kernel
\begin{equation}
\label{e:gr}
g_r(x) =  g(x) \chi(|x|/r) 
\end{equation}
whose support is contained in the ball $B(r/2)$, and the $r$-truncated shot noise field
\[ f_r =   \sum_{i \in \mathcal{P}} Y_i g_r(x-i) .\]
Clearly $f_r$ is $r$-range dependent as claimed, since $f_r(x)$ depends on the Poisson point process $\mathcal{P}$ restricted to the ball of radius $r/2$ centred at $x$.

\smallskip
We can identify $f$ with the limiting case $r = \infty$ since $f_r$ converges, as $r \to \infty$, to $f$ in probability in the uniform topology on compact sets (we quantify the speed of convergence in Lemma \ref{l:trunc} below).

\subsubsection{Spatial discretisation}
Our second approximation is based on a spatial approximation of the Poisson point process $\mathcal{P}$, and allows us to compare $f$ to a field $f_r^\eps$ that is measurable on compact sets with respect to a finite set of independent random variables. Unlike for $f_r$, we do not define $f_r^\eps$ via an explicit coupling to $f$.

\smallskip
For $\eps > 0$, let $(\textrm{Ber}^{\eps^2}_i)_{i \in \eps \Z^2}$ be i.i.d.\ Bernoulli random variables with parameter $\eps^2$, and define the point set
 \begin{equation}
 \label{e:peps}
 \mathcal{P}^\eps =  \{ i \in \eps \Z^2 : \textrm{Ber}^{\eps^2}_i = 1 \} . 
 \end{equation}
 Then define the $\eps$-discretised shot noise field 
\[  f^\eps(x) =  \sum_{i \in \eps \Z^2} \textrm{Ber}^{\eps^2}_i Y_i g(x-i)  = \sum_{i \in \mathcal{P}^\eps}  Y_i g(x-i) \]
where $(Y_i)_{i \in \eps \Z^2}$ are, as before, i.i.d.\ random variables drawn from $\mu$. Since $g$ is in $C^2$ and $Y_i$ has finite mean, this series converges absolutely in the Sobolev space $W^{2,1}$ on compact sets. For $r > 0$ define $f_r^\eps$ analogously by replacing $g$ with $g_r$. Then for a compact set $D \subset \mathbb{R}$, $f_r^\eps$ is measurable with respect to the finite set of independent random variables $(\textrm{Ber}^{\eps^2}_i, Y_i)_{i \in \Lambda}$ where $\Lambda \subset \eps \Z^2$ is the set of all points within distance $r/2$ of $D$.

\smallskip
We verify in Lemma \ref{l:dis} below that $f_r^\epsilon$ converges, as $\epsilon \to 0$, to $f_r$ in law in the uniform topology on compact sets. Hence it is natural to include $f_r$ in the set $(f_r^\epsilon)_{\epsilon \ge 0}$ by identifying $f_r$ with the limiting case $f_r^0$.
 
\subsubsection{Adding a constant to the mark}
Finally we introduce an approximation of $f$ in which a constant is added to each mark; we use this as a proxy for raising the mean of the field.

\smallskip
Let $h = (h_i)_{i \in \epsilon \mathbb{Z}^2}$, $h_i \in \R$. Then for $\eps > 0$ we construct $f^{\epsilon,h}$ from $f^\epsilon$ by adding $h_i$ to the mark $Y_i$, i.e.\ 
\[ f^{\epsilon,h} = \sum_{i \in \eps \Z^2} \textrm{Ber}^{\eps^2}_i (Y_i + h_i) g(x-i) . \]
In the case $\eps = 0$, we instead let $h = (h_i)_{i \in \mathcal{P}}$ and define 
\[ f^h =  f^{0, h} = \sum_{i \in \mathcal{P}}  (Y_i + h_i) g(x-i) . \]
We also define $f_r^{\epsilon,h}$ analogously (i.e.\ replacing $g$ with $g_r$). When $h \in \mathbb{R}$, we define $f_r^{\eps, h}$ by  setting $h_i \equiv h$. 

\smallskip
If we assume $g \ge 0$, the field $f_r^{\eps,h}$ is increasing in $h_i$, which will be important in the proof of Proposition \ref{p:di} below.

\subsection{Analysis of the approximations}
\label{ss:an}

We next derive bounds on the distance between the shot noise field $f$ and the approximations introduced above. For this we use general concentration inequalities for shot noise fields which, since they are rather technical, we state and prove in Appendix \ref{s:a}.

\smallskip
We begin by analysing the effect of the truncation. Recall that $B(r)$ denotes the ball of radius $r$ centred at the origin. For a function $s:\mathbb{R}^d \to \mathbb{R}$, $d\ge 1$, we use $\|s\|_p$ to denote the standard $L^p$ norm, and for a compact domain $D \subset \R^2$ we define $\|s\|_{D, \infty} = \sup_{x \in D} |s(x)|$. Rather than work under Condition \ref{c:1}, in this section we prefer to isolate the conditions that are relevant:
\begin{condition}
\label{a:c2}
The mark distribution $\mu$ has a sub-exponential tail. Moreover, the kernel $g$ is in $C^2(\R^2)$ and there exist $\alpha > 2$ and $c > 0$ such that
\[  |\partial^{k} g(x)| < c(1+|x|)^{-\alpha-|k|}  \]
for $|k| \le 2$ and $x \in \R^2$.
\end{condition}

\begin{lemma}
\label{l:trunc}
Suppose Condition \ref{a:c2} holds for a parameter $\alpha > 2$. Then:
\begin{enumerate}
\item There exist $c_1, c_2 > 0$ such that, for all $r,s,t \ge 6$,
\[ \mathbb{P}(\|f - f_r\|_{B(s), \infty} > c_1 t^2 r^{2-\alpha} (\log r)^2) < c_1 s^2 e^{-c_2 t} .\]
\item If moreover $\mu$ is symmetric, there exist $c_1, c_2 > 0$ such that, for all $r,s,t \ge 6$,
\[ \mathbb{P}(\|f - f_r\|_{B(s), \infty} > c_1 t^2 r^{1-\alpha} (\log r)^2) < c_1 s^2 e^{-c_2 t} .\]
\end{enumerate}
\end{lemma}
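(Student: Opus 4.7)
The plan is to recognize $f-f_r$ as itself a shot noise field with residual kernel $\tilde g_r := g - g_r = g(x)(1-\chi(|x|/r))$, which by construction vanishes on the ball $\{|x|\le r/4\}$, and then pass from a pointwise concentration estimate to a uniform estimate over $B(s)$ by a chaining/net argument. The analysis splits cleanly into (i) controlling the moments of $\tilde g_r$, (ii) a single-point concentration step, and (iii) extending to the sup-norm.

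First I would use the decay hypothesis $|\partial^k g(x)|\le c(1+|x|)^{-\alpha-|k|}$ for $|k|\le 2$ and the fact that $\tilde g_r$ is supported in $\{|x|\ge r/4\}$ to estimate, for $\alpha > 2$,
\begin{align*}
\|\tilde g_r\|_{L^1} &\le C\, r^{\,2-\alpha}, \qquad \|\tilde g_r\|_{L^2}^2 \le C\, r^{\,2-2\alpha}, \qquad \|\tilde g_r\|_{\infty} \le C\, r^{-\alpha},
\end{align*}
with analogous bounds for the derivatives $\partial^k \tilde g_r$, $|k|\le 2$. For any fixed $x\in\R^2$, the random variable $Z(x) := (f-f_r)(x) = \sum_{i\in\mathcal P} Y^\mu_i \tilde g_r(x-i)$ is a single-point shot noise functional whose mean is $\E[Y^\mu]\int\tilde g_r$ (which is $O(r^{2-\alpha})$ in general and identically $0$ in the symmetric case), and whose variance is $\E[(Y^\mu)^2]\|\tilde g_r\|_{L^2}^2 = O(r^{2-2\alpha})$. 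Applying the Bernstein-style concentration inequality for shot noise fields with sub-exponential marks from Appendix~\ref{s:a}, this yields, for every $t\ge 1$,
\begin{equation*}
\P\big(|Z(x)-\E Z(x)| > t\,r^{1-\alpha}\big) \le C e^{-c t},
\end{equation*}
so that $|Z(x)| \lesssim t\, r^{2-\alpha}$ in the general case (mean-dominated) and $|Z(x)|\lesssim t\, r^{1-\alpha}$ in the symmetric case (variance-dominated).

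Next I would upgrade this pointwise bound to a bound on $\|f-f_r\|_{B(s),\infty}$ by a standard discretisation plus gradient interpolation. Fix an $\eps$-net $\mathcal N\subset B(s)$ with $|\mathcal N|\le C s^2\eps^{-2}$; applying the previous pointwise estimate at each net point and a union bound gives
\begin{equation*}
\max_{x\in\mathcal N}|Z(x)| \le C\big(t+\log(s^2/\eps^2)\big) r^{1-\alpha} \quad\text{with probability }\ge 1 - C s^2\eps^{-2} e^{-ct}
\end{equation*}
(with $r^{1-\alpha}$ replaced by $r^{2-\alpha}$ in the non-symmetric case). For points in $B(s)\setminus\mathcal N$ I use $\|f-f_r\|_{B(s),\infty}\le \max_{x\in\mathcal N}|Z(x)| + \eps\|\nabla(f-f_r)\|_{B(s),\infty}$, and iterate the same concentration reasoning one level further on $\nabla(f-f_r)$, whose relevant $L^2$-norm scales as $\|\nabla\tilde g_r\|_{L^2}=O(r^{-\alpha})$ (one gradient gains an extra power of $r^{-1}$ by the decay assumption on $\partial^k g$). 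Choosing $\eps = r^{-1}$ so that both the net-maximum term and the gradient-interpolation term balance absorbs a factor $\log r$ at each of the two levels, which produces the $(\log r)^2$ factor; rewriting the resulting deviation as $t^2 r^{1-\alpha}(\log r)^2$ (resp.\ $t^2 r^{2-\alpha}(\log r)^2$) with probability $s^2 e^{-c_2 t}$ by absorbing the $s^2 r^{O(1)}$ union-bound prefactor into $t$ (after possibly replacing $t$ by $t/2$) gives the claimed estimates.

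The principal obstacle is the bookkeeping in the chaining step, namely matching the right scaling of the two terms (the net maximum of $Z$ and the sup of $\nabla Z$) so that both simultaneously saturate at the stated deviation scale; this is where the slightly loose $t^2(\log r)^2$ appears in place of a sharper $t\log r$, but the looser form is perfectly adequate since only its smallness in $r$ matters for subsequent applications. The distinction between the two cases of the lemma enters only at the pointwise step: the symmetric assumption kills the $O(r^{2-\alpha})$ mean and leaves the $O(r^{1-\alpha})$ standard deviation as the driving scale, while in the general case the non-zero mean dominates.
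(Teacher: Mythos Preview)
Your approach is essentially the paper's: recognise $f-f_r$ as a shot noise field with residual kernel $g^r=g(1-\chi(|\cdot|/r))$ supported outside $B(r/4)$, then feed it into the concentration inequalities of Appendix~\ref{s:a}. The paper simply invokes the packaged sup-norm bounds \eqref{eq:concentr-sup} and \eqref{eq:concentr-grad} of Proposition~\ref{p:ldb2} directly rather than unpacking them into an explicit pointwise-plus-chaining argument as you do.

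Two points where your account of the mechanism is off. First, the $(\log r)^2$ factor does not come from two levels of chaining; it arises from the weight $[x]=(1+\log|x|)^{2/\beta}$ (here $\beta=1$ since $\mu$ has sub-exponential tail) built into the auxiliary functions $\tilde g,\hat g$ in Proposition~\ref{p:ldb2}, which is the cost of truncating unbounded marks. Second, for the symmetric statement the paper does not iterate a variance-based bound on $\nabla(f-f_r)$. The inequality \eqref{eq:concentr-grad} controls the sup over a unit cube by the value at one point (variance-based via symmetry, contributing $\|\tilde g^r\|_{L^2}\sim r^{1-\alpha}(\log r)^2$) plus the sup of the gradient, and the latter is bounded via the crude $L^1$-based inequality \eqref{eq:concentr-sup} applied to the kernel $\nabla g^r$; it is $\|\widehat{\nabla g^r}\|_{L^1}\sim r^{1-\alpha}(\log r)^2$, one power of $r$ better than $\|g^r\|_{L^1}$ because of the extra derivative decay, that matches the pointwise term. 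Your ``iterate one level further using $\|\nabla\tilde g_r\|_{L^2}=O(r^{-\alpha})$'' does not terminate as written (you would still need the sup of $\nabla^2(f-f_r)$), whereas switching to the $L^1$ bound on the gradient closes the argument in a single step.
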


\begin{proof}
Observe that $f^r = f-f_r$ is a shot noise field with mark distribution $\mu$ and kernel 
\[ g^r(x) = g(x) - g_r(x) = g(x)(1 - \chi(|x|/r)) \]
where $\chi$ is, as in \eqref{e:gr}, an infinitely differentiable cut-off function such that $\chi(x) = 1$ for $x \le 1/4$, and $\chi(x) = 0$ for $x \ge 1/2$. To control the deviations of $g^r$ we apply the concentration inequalities in Proposition \ref{p:ldb2}, noting that we may take $\beta =1$ in this proposition since $\mu$ has a sub-exponential tail.

To prove the first statement define the auxiliary function
\[ \widehat{g^r}(x) = \sup_{y\in [-1/2,1/2]^{2}} (1+\log(|x+y|))^2 |g^r(x+y)| .\]
According to \eqref{eq:concentr-sup} (recall that we take $\beta = 1$), for all $r,s,t \ge 6$,
\[ \mathbb{P} \Big(\|f -f_r\|_{B(s), \infty} \ge   t  \big(\|\widehat{g^r}\|_1+ \sqrt{2t} \|\widehat{g^r}\|_2 + \frac{t}{3}  \|\widehat{g^r}\|_{\infty}  \big) \Big) \le  c_1 s^2 e^{-c_2 t}  \]
for some $c_1,c_2 > 0$. By the decay of $g$ in Condition \ref{a:c2}, there is a $c_3 > 0$ such that, for $r \ge 6$,
\[ |g^r(x)| \le c_3 |x|^{-\alpha} \id_{|x| \ge r/4}  \quad \text{and} \quad  |\widehat{g^r}(x)|  \le c_3 |x|^{-\alpha} (\log |x|)^2 \id_{|x| \ge r/4} . \]
Hence there exists a $c_4 > 0$ such that, for $r \ge 6$,
\[ \|\widehat{g^r}\|_1 \le  c_4 r^{2-\alpha} (\log r)^2    \ , \quad  \|\widehat{g^r}\|_2 \le c_4 r^{1-\alpha} (\log r)^2  \quad \text{and} \quad   \|\widehat{g^r}\|_\infty \le c_4 r^{-\alpha} ( \log r)^2  , \]
and so  the bound reduces to 
\[ \mathbb{P} \big(\|f-f_r\|_{B(s), \infty} \ge  c_5 t^{2}  r^{2-\alpha} (\log r)^2  \big) \le  c_1 s^2 e^{-c_2 t} \]
for some $c_5 > 0$.

For the second statement, define the auxiliary functions
\[  \widetilde{g^r}(x) =  (1+\log(|x|))^2 |g^r(x)|  \quad \text{and} \quad \widehat{\nabla g^r}(x) = \sup_{y\in [-1/2,1/2]^{2}} (1+\log(|x+y|))^2 |\nabla g^r(x+y)|   .\]
According to \eqref{eq:concentr-grad} (again recall that we take $\beta = 1$), for all $r,s,t \ge 6$,
\begin{align*}
& \mathbb{P} \Big(\|f -f_r\|_{B(s), \infty} \ge  2 {t}  \big(\sqrt{2t} \|\widetilde{g^r}\|_2+\frac{t}{3}\|\widetilde{g^r}\|_\infty + \|\widehat{\nabla g^r}\|_1 + \sqrt{2t} \|\widehat{\nabla g^r}\|_2  + \frac{t}{3}  \|\widehat{\nabla g^r}\|_\infty  \big) \Big)  \\
& \qquad \le  c_1 s^2 e^{-c_2 t} 
\end{align*}
for some $c_1,c_2 > 0$.  By the decay of $g$ and its derivatives in Condition \ref{a:c2}, and since the derivatives of $\chi(|x|/r)$ are uniformly bounded for $r \ge 1$, there is a $c_3 > 0$ such that, for $r \ge 6$,
\[ |\widetilde{g^r}(x)| \le c_3 |x|^{-\alpha} (\log |x|)^2  \id_{|x| \ge r/4}  \quad \text{and} \quad   |\widehat{\nabla g^r}(x)|  \le c_3 |x|^{-\alpha-1} (\log |x|)^2 \id_{|x| \ge r/4}  .\]
Hence there exists a $c_4 > 0$ such that, for $r \ge 6$,
\[ \|\widetilde{g^r}\|_2 \le c_4 r^{1-\alpha}(\log r)^2 \ , \quad   \|\widetilde{g^r}\|_\infty \le c_4 r^{-\alpha}(\log r)^2  \ ,    \]
\[ \|\widehat{\nabla g^r}\|_1 \le c_4  r^{1-\alpha} (\log r)^2 \ , \quad \|\widehat{\nabla g^r}\|_2 \le c_4 r^{ -\alpha} (\log r)^2  \quad \text{and} \quad   \|\widehat{\nabla g^r}\|_\infty \le c_4 r^{-1-\alpha} ( \log r)^2  , \]
and so the bound reduces to
\[ \mathbb{P} \big(\|f-f_r\|_{B(s), \infty} \ge  c_5 t^{2}  r^{1-\alpha} (\log r)^2  \big) \le  c_1 s^2 e^{-c_2 t} \]
for some $c_5 > 0$.
\end{proof}

We next analyse the effect of adding a constant to the mark:  

\begin{lemma}
\label{l:h}
Suppose Condition \ref{a:c2} holds for a parameter $\alpha > 2$. Then there exist $c_1, c_2 > 0$ such that, for all $h > 0$ and $s,t \ge 1$,
\[ \mathbb{P}(\|f - f^h\|_{B(s), \infty} > c_1 t h ) < c_1 s^2 e^{-c_2 t} .\]
\end{lemma}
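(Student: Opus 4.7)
The plan is to observe that for constant $h \in \mathbb{R}$, the identity $f^h(x) - f(x) = h \sum_{i \in \mathcal{P}} g(x-i) = h \, S(x)$ holds, where $S$ is itself a shot noise field with kernel $g$ and constant unit mark (i.e.\ mark distribution $\delta_1$). Since $\delta_1$ trivially has sub-exponential tail and the kernel $g$ satisfies Condition \ref{a:c2}, the concentration inequality \eqref{eq:concentr-sup} from Proposition \ref{p:ldb2} applies directly to $S$.

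I would follow essentially the same strategy as the proof of Lemma \ref{l:trunc}(1), but with a crucial simplification: the relevant auxiliary function
\[ j_1(x) = \sup_{y\in[-1/2,1/2]^{2}} (1+\log(|x+y|))^2 |g(x+y)| \]
has $L^1$, $L^2$, and $L^\infty$ norms that are finite constants depending only on $g$ (with no truncation-parameter dependence as there was in Lemma \ref{l:trunc}). Applying \eqref{eq:concentr-sup} gives an estimate of the form $\mathbb{P}(\|S-\mathbb{E}S\|_{B(s),\infty} > c_1 t) \le c_2 s^2 e^{-c_3 t}$ for all $s,t \ge 1$. Since $\mathbb{E}S(x) = \int g$ is a fixed finite constant independent of $x$, the deterministic contribution $|\mathbb{E}S|$ is absorbed into $c_1 t$ for $t \ge 1$, yielding $\mathbb{P}(\|S\|_{B(s),\infty} > c_1' t) \le c_2 s^2 e^{-c_3 t}$. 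Multiplying through by $h$ and using $\|f^h - f\|_{B(s),\infty} = h\|S\|_{B(s),\infty}$ gives the stated bound.

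The only technical point requiring care is confirming that the concentration estimate gives linear (rather than polynomial) dependence on $t$ in the deviation. Because the marks of $S$ are bounded, one is effectively in the Bernstein regime of the concentration inequality, where the dominant term in the bracket $(\|j_1\|_{L^1} + \sqrt{2t}\|j_1\|_{L^2} + (t/3)\|j_1\|_\infty)$ can be absorbed into an overall constant once $h$ and $g$ are fixed; alternatively, one may invoke the linear-tail part of the Bernstein-type estimate directly. This is the only step where the proof's form meaningfully differs from Lemma \ref{l:trunc}, and is essentially the entirety of the nontrivial work.
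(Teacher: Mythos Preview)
Your proposal is correct and follows essentially the same route as the paper. The only streamlining the paper makes is to invoke \eqref{eq:concentr-sup} directly in the case $\beta = \infty$ (since the mark $\delta_1$ is supported in $[-1,1]$): then the auxiliary function is simply $j(x)=\sup_{y\in[-1/2,1/2]^2}|g(x+y)|$ with no logarithmic weight, the prefactor $t^{1/\beta}$ equals $1$, and the threshold $\|j\|_{L^1}+\sqrt{2t}\|j\|_{L^2}+(t/3)\|j\|_\infty$ is immediately bounded by a constant times $t$ for $t\ge 1$. Your auxiliary $j_1$ with the log factor corresponds to $\beta=1$, which on its own would give a $t^2$ threshold; but you correctly flag that bounded marks put you in the $\beta=\infty$ regime, which is exactly what the paper exploits. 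One further simplification: the centering step is unnecessary, since \eqref{eq:concentr-sup} bounds $\|S\|_{sQ,\infty}$ itself, with the mean already absorbed into the $\|j\|_{L^1}$ term.
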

\begin{proof}
Observe that $(f - f^h)/h$ is a shot noise field with kernel $g$ and (degenerate) mark distribution $\mu'$ with an atom at $1$.  Define the auxiliary function
\[  \widehat{g}(x) = \sup_{y\in [-1/2,1/2]^{2}} |g(x+y)| . \]
 According to the concentration inequality in equation \eqref{eq:concentr-sup} of Proposition \ref{p:ldb2} (we may take $\beta = \infty$ in this proposition since the support of $\mu'$ is contained in $[-1,1]$), for all $h > 0$ and $s,t \ge 1$,
\[ \mathbb{P} \Big(\|f -f^h\|_{B(s), \infty} \ge  2 h \big( \|\widehat{g}\|_1 + \sqrt{2t} \|\widehat{g}\|_2   + \frac{t}{3}  \|\widehat{g}\|_\infty  \big) \Big) \le  c_1 s^2 e^{-c_2 t} \]
for some $c_1, c_2 > 0$. By the decay of $g$ in Condition \ref{a:c2}, each of $\|\widehat{g}\|_1$, $\|\widehat{g}\|_2$ and $\|\widehat{g}\|_\infty $ are finite, so the bound reduces to
\[ \mathbb{P} \Big(\|f -f^h\|_{B(s), \infty} \ge  c_3 h t  \Big) \le  c_1 s^2 e^{-c_2 t} \]
for some $c_3 > 0$.
\end{proof}

Finally we consider the spatial discretisation, for which we need only the qualitative fact that $f^\eps$ converges to $f$ in law in the uniform topology on compact sets:

\begin{lemma}
\label{l:dis}
Suppose Condition \ref{a:c2} holds for a parameter $\alpha > 2$. Then $f^\eps \Rightarrow f$ in law in the uniform topology on compact sets.
\end{lemma}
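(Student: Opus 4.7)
I would establish weak convergence of $f^\eps$ to $f$ in $C(D)$ for each compact $D \subset \R^2$ (which is equivalent to convergence in law in the uniform topology on compact sets) by combining convergence of finite-dimensional distributions with tightness of the family $(f^\eps)_{\eps > 0}$.

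\textbf{Finite-dimensional marginals.} Fix $x_1, \ldots, x_n \in \R^2$ and $t_1, \ldots, t_n \in \R$, and set $\sigma(y) := \sum_k t_k g(x_k - y)$. By the independence of the pairs $(\textrm{Ber}_i^{\eps^2}, Y_i^\mu)_{i \in \eps \Z^2}$, the characteristic function of $\sum_k t_k f^\eps(x_k)$ factorises; writing $\hat{\mu}$ for the characteristic function of $\mu$, one gets
\[
\log \mathbb{E}\bigl[e^{i \sum_k t_k f^\eps(x_k)}\bigr] = \sum_{i \in \eps \Z^2} \log\bigl(1 + \eps^2 (\hat{\mu}(\sigma(i)) - 1)\bigr).
\]
Condition \ref{a:c2} gives $\sigma \in L^1 \cap L^\infty(\R^2)$, and since $\mu$ has finite mean one has $|\hat{\mu}(t) - 1| \le c |t|$. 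A Riemann-sum argument together with the elementary bound $|\log(1+z) - z| \le |z|^2$ for $|z| \le 1/2$ then yields, as $\eps \to 0$, the limit $\int_{\R^2}(\hat{\mu}(\sigma(y)) - 1)\, dy$, which by Campbell's formula equals $\log \mathbb{E}[e^{i \sum_k t_k f(x_k)}]$. The Cram\'er--Wold device closes this step.

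\textbf{Tightness.} Since $g \in C^2$, $f^\eps$ is $C^2$-smooth by the same series argument sketched for $f$ in Section \ref{ss:pert}, so Arzel\`a--Ascoli reduces tightness in $C(D)$ to showing that $\sup_{\eps \in (0,1]} \mathbb{E}[\|f^\eps\|_{D,\infty}]$ and $\sup_{\eps \in (0,1]} \mathbb{E}[\|\nabla f^\eps\|_{D,\infty}]$ are finite. Because $f^\eps$ is a sum of \emph{independent} cell-indexed contributions with sub-exponential marks and a rapidly decaying kernel, the Bernstein-type proof of Proposition \ref{p:ldb2} adapts to the Bernoulli setting with constants uniform in $\eps$; integrating the resulting tail bounds for $\|f^\eps\|_{D,\infty}$ and $\|\nabla f^\eps\|_{D,\infty}$ gives the required moment estimates.

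\textbf{Main obstacle.} The principal technical step will be verifying that the concentration estimates of Proposition \ref{p:ldb2} transfer to $f^\eps$ with $\eps$-uniform constants. An alternative that sidesteps this is a direct coupling: realise a unit-intensity Poisson process on $\R^2$ by placing a Poisson$(\eps^2)$-distributed number of uniform points in each $\eps$-cell, couple with $(\textrm{Ber}_i^{\eps^2})_{i \in \eps \Z^2}$ at total-variation cost $O(\eps^4)$ per cell, and then estimate $\|f - f^\eps\|_{D,\infty}$ using the mean-value theorem, the decay bound $|\nabla g(x)| \lesssim (1+|x|)^{-\alpha-1}$, and the sub-exponential tail of $\mu$. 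Either route delivers the desired weak convergence.
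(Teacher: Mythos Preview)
Your proposal is correct, and both of the routes you sketch would work. The paper, however, takes precisely the \emph{direct coupling} route you describe at the end as an alternative, not the finite-dimensional-distributions-plus-tightness route that forms the body of your plan. Concretely, the paper fixes a compact $D$, truncates both fields to points in a large box $[-c,c]^2$ (making the tail contribution small in probability), invokes the standard binomial--Poisson approximation to couple the Bernoulli point set $\mathcal{P}^\eps \cap [-c,c]^2$ to $\mathcal{P} \cap [-c,c]^2$ so that the cardinalities match with high probability and the points are close, and then bounds $\|\tilde f - \tilde f^\eps\|_{D,\infty}$ by the mean-value theorem times $\max_i |Y_i^\mu|$, which is bounded in probability.

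Comparing the two: the coupling argument is shorter and avoids any reworking of the concentration machinery; it also yields convergence in probability under a coupling, which is slightly more than weak convergence. Your characteristic-function/tightness route is the classical alternative and is perfectly sound, but you are overestimating the difficulty of the tightness step. You do not need to port Proposition~\ref{p:ldb2} to the Bernoulli setting: a crude first-moment bound already gives
\[
\mathbb{E}\bigl[\|\nabla f^\eps\|_{D,\infty}\bigr] \le \mathbb{E}|Y^\mu| \, \eps^2 \sum_{i \in \eps\Z^2} \sup_{x \in D} |\nabla g(x-i)|,
\]
which is a Riemann sum for a finite integral (since $|\nabla g(x)| \lesssim (1+|x|)^{-\alpha-1}$ with $\alpha > 2$) and hence uniformly bounded in $\eps$; the same works for $\|f^\eps\|_{D,\infty}$. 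This, together with Markov's inequality, gives the Arzel\`a--Ascoli tightness immediately, so the ``main obstacle'' you flag essentially dissolves.
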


\begin{proof}
Fix a compact set $D \subset \R^2$. Recall the point set  $\mathcal{P}^\eps =  \{ i \in \eps \Z^2 : \textrm{Ber}^{\eps^2}_i = 1 \} $ defined in \eqref{e:peps}. Fix $c > 0$ and define the point sets $\mathcal{P}_c = \mathcal{P} \cap [-c,c]^2$ and $\mathcal{P}^\eps_c = \mathcal{P}^\eps \cap [-c,c]^2$. Then
\[  \tilde{f} = \! \! \sum_{i \in \mathcal{P}_c} Y_i^{\mu} g(x-i) \quad \text{and} \quad  \tilde{f}^\eps = \! \! \sum_{i \in \mathcal{P}^\eps_c} Y_i^{\mu}  g(x-i)  \]
are the contributions to $f$ (resp.\ $f^\eps$) from the Poisson points (resp.\ Bernoulli points) inside~$[-c,c]^2$.  Since $f$ and $f^\eps$ are defined as absolutely convergence series in the Sobolev space $W^{2,1}$ on compact sets, the differences
\[  \|  f -   \tilde{f} \|_{D,\infty}  \quad \text{and} \quad  \|f^\eps -  \tilde{f}^\eps\|_{D,\infty}  \]
can be made arbitrarily small with arbitrarily high probability by taking $c > 0$ large enough. Hence it is sufficient to show that one can couple the (ordered) point sets $\mathcal{P}_c$ and $\mathcal{P}^\eps_c$ so that, as $\eps \to 0$, $\| \tilde{f} -  \tilde{f}^\eps    \|_{D, \infty} \to 0$ in probability.

Let $(p_i)_{i \ge 1}$ and $(q_i)_{i \ge 1}$ be enumerations of $\mathcal{P}_c$ and $\mathcal{P}^\eps_c$ respectively. By the standard binomial approximation of a Poisson point process, one can couple $(p_i)$ and $(q_i)$ so that the event $C_\eps = \{ |\mathcal{P}_c| = |\mathcal{P}^\eps_c|\}$ occurs with probability tending to one as $\eps \to 0$, and further, on the event $C_\eps$, $d_0 = \sum_{i} | p_i - q_i| \to 0$ in probability as $\eps \to 0$. Then since $|\mathcal{P}_c|$ and $|Y_i|$ are bounded in probability, on the event $C_\eps$, 
\begin{align*}
  \|\tilde{f} -  \tilde{f}^\eps \|_{D, \infty}  & =   \sup_{x \in D} \Big| \sum_i |Y_i^{\mu}| ( g(x-p_i)  - g(x-q_i) )  \Big| \\
  & \le d_0 \| |\nabla g| \|_\infty   \max_i  |Y_i|    
  \end{align*}
 tends to $0$ in probability as $\eps \to 0$, as required.
 \end{proof}

\subsection{Regularity of critical points}
To complete the section we prove that the critical points of $f$ are unlikely to be close to any fixed level. 

\begin{proposition}[Critical points in a narrow window]
\label{p:regcrit}
Suppose Condition \ref{a:c2} holds for a parameter $\alpha > 2$ and also \eqref{a:bd} holds. Then there exist $c, \delta_0 > 0$ such that, for every $\ell \in \mathbb{R}$, $\delta \in (0, \delta_0)$ and $s \ge 1$,  
\[ \mathbb{P}( \exists  x \in B(s) : \nabla f(x) = 0, |f(x)-\ell| < \delta) < c s^2 \delta  |\!\log \delta|^2  , \]
and moreover, for every direction $v$ on the unit circle, 
\[ \mathbb{P}( \exists  x \in s L_v : \partial_v f(x) = 0, |f(x)-\ell| < \delta) < c s \delta  |\!\log \delta|  , \]
where $L_v$ denotes a unit line segment in direction $v$.
\end{proposition}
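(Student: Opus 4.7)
The plan is to establish both bounds by a discretisation argument that couples a Taylor expansion near any putative critical point with the bounded-density assumption~\eqref{a:bd} and a concentration estimate for the Hessian of~$f$.

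For the 2D statement, I would cover $B(s)$ by an $\eta$-net of $O(s^2/\eta^2)$ grid points, with $\eta > 0$ to be optimised, and set $M = \|\nabla^2 f\|_{B(s+1), \infty}$. If $x^*$ is a critical point of $f$ in $B(s)$ with $|f(x^*)-\ell| < \delta$, then for any grid point $x_0$ within distance $\eta$ of $x^*$, a second-order Taylor expansion at $x^*$ gives
\[
|\nabla f(x_0)| \le M \eta \quad \text{and} \quad |f(x_0) - \ell| \le \delta + \tfrac{1}{2} M \eta^2 .
\]
Thus, for any $T > 0$, the event we wish to bound is contained in $\{M > T\} \cup \bigcup_{x_0}\{|\nabla f(x_0)| \le T\eta,\ |f(x_0)-\ell| \le \delta + T\eta^2\}$. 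The first event, by Proposition~\ref{p:ldb2} applied to the second derivatives of $g$ (whose decay is controlled under Condition~\ref{a:c2}), has probability at most $c_1 s^2 e^{-c_2 T}$. On $\{M \le T\}$, stationarity and~\eqref{a:bd} give that $(f(x_0), \nabla f(x_0))$ has a density bounded by some $D$, so each grid event has probability at most $C D T^2 \eta^2 (\delta + T\eta^2)$ by a direct volume calculation in the $(u, \nabla f)$-space.

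Summing over the grid yields a main contribution of $C D s^2 T^2 (\delta + T \eta^2)$. Optimising with $\eta^2 = \delta/T$ collapses this to $O(s^2 T^2 \delta)$, and choosing $T = c_2^{-1}|\log\delta|$ matches the Hessian-tail contribution with the main term, yielding $c s^2 \delta |\log \delta|^2$. For the 1D statement I would run the same argument along the segment $sL_v$, using $O(s/\eta)$ sub-intervals of length $\eta$ and the 1D Taylor expansion $|\partial_v f(x_0)| \le M\eta$, $|f(x_0)-\ell| \le \delta + \tfrac{1}{2} M\eta^2$. The per-interval probability now involves the density of the 2D projection $(f(0), \partial_v f(0))$, whose boundedness I would derive from~\eqref{a:bd} by integrating the full joint density against a window $|\partial_{v^\perp} f(0)| \le R$ (contributing $\lesssim RD$ by the pointwise bound) and absorbing the complement through the sub-exponential tail of $\partial_{v^\perp} f(0)$ (from Proposition~\ref{p:ldb2} applied to $\nabla g$); with $R$ of logarithmic size the marginal density is bounded up to a harmless constant. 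The per-interval bound is then $\lesssim T\eta(\delta + T\eta^2)$, and the same optimisation $\eta^2 = \delta/T$, $T \sim |\log\delta|$ delivers the claimed $c s \delta |\log\delta|$.

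The principal obstacle is that~\eqref{a:bd} only supplies a density bound for $(f(0), \nabla f(0))$, not for the Hessian, so a direct Kac-Rice approach (which would want joint bounded density of $(f, \nabla f, \nabla^2 f)$) is unavailable. The truncation $\{M \le T\}$, made effective by the sub-exponential concentration of $\|\nabla^2 f\|_{B(s+1), \infty}$ through Proposition~\ref{p:ldb2}, is exactly what bridges the Taylor step with the density step and is the source of the logarithmic factors; the required balance between the discretisation scale $\eta$ (small enough for Taylor accuracy, large enough for a tractable union bound) is resolved at $\eta^2 = \delta/T$, and the extra subtlety in the 1D case is handled by the marginalisation just described.
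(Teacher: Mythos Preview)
Your approach is essentially the paper's: discretise the domain, use a Taylor expansion near a putative critical point to convert the event into a pointwise small-box estimate for $(f,\nabla f)$ (respectively $(f,\partial_v f)$), apply the bounded density \eqref{a:bd}, and control the truncation of the derivatives via the concentration bounds of Proposition~\ref{p:ldb2}. The only cosmetic differences are that the paper's truncation event $\Omega_\delta$ also bounds the first derivatives (so it uses a first-order Taylor estimate for $f$, whereas you exploit $\nabla f(x^*)=0$ and go to second order), and that you spell out the marginalisation needed to control the $(f(0),\partial_v f(0))$-density in the one-dimensional case, which the paper leaves implicit.
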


\begin{corollary}[Regularity of level lines]
\label{c:rll}
Suppose Condition \ref{a:c2} holds for a parameter $\alpha > 2$ and also \eqref{a:bd} holds. Then for every $\ell \in \mathbb{R}$, the level set $ \{ f = \ell\}$ almost surely consists of a collection of simple closed $C^2$-smooth curves. Moreover, for every line segment~$L$, $\{ f = \ell\}$ almost surely has no points of tangency with $L$.\end{corollary}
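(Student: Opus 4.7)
The plan is to deduce both claims directly from Proposition \ref{p:regcrit} combined with the $C^2$-smoothness of $f$: the first statement follows from the implicit function theorem once critical points of $f$ at level $\ell$ have been ruled out, and the second follows by the analogous argument applied to directional derivatives.

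For the smooth structure of $\{f = \ell\}$, I would first show that almost surely $\nabla f$ does not vanish on $\{f = \ell\}$. For each integer $s \ge 1$, the event $A_s = \{\exists x \in B(s) : \nabla f(x) = 0, \ f(x) = \ell\}$ is the decreasing intersection, as $\delta \to 0$, of the events appearing in the first inequality of Proposition \ref{p:regcrit}. Since those probabilities are bounded by $c s^2 \delta |\log \delta|^2$, which tends to $0$, continuity of probability gives $\mathbb{P}(A_s) = 0$; a countable union over $s \in \mathbb{N}$ then yields $\mathbb{P}(\exists x : \nabla f(x) = 0, f(x) = \ell) = 0$. On this full-probability event, the $C^2$-smoothness of $f$ (noted after Condition \ref{c:1}) and the implicit function theorem imply that $\{f = \ell\}$ is a $C^2$-smooth embedded one-dimensional submanifold of $\mathbb{R}^2$ without boundary, whose connected components are simple $C^2$ curves. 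That these are genuine simple closed (Jordan) curves is either read off from Theorem \ref{t:main1}(1) when it applies, or follows from a direct localisation argument in the present setting.

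For the tangency claim, fix a line segment $L$ in direction $v$. A point $x \in L \cap \{f = \ell\}$ is a point of tangency of $\{f = \ell\}$ with $L$ if and only if the tangent line to $\{f = \ell\}$ at $x$ is parallel to $L$, which, since $\nabla f(x) \neq 0$ by the first part, is equivalent to $\partial_v f(x) = 0$. I would cover $L$ by $\lceil |L| \rceil$ translates of the unit segment $L_v$, apply the second inequality of Proposition \ref{p:regcrit} together with translation invariance of the law of $f$, and sum the resulting estimates to obtain that the probability of a point $x \in L$ with $\partial_v f(x) = 0$ and $|f(x) - \ell| < \delta$ existing is at most $c'|L| \delta |\log \delta|$. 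Sending $\delta \to 0$ concludes the argument.

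I do not anticipate any significant obstacle: the heavy lifting is done by Proposition \ref{p:regcrit}, and the remainder is a textbook application of the implicit function theorem. The only delicate step is the interchange of the $\delta \to 0$ limit with the relevant events, which is handled by monotonicity of the events in $\delta$ and continuity of probability.
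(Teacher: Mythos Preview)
Your approach is essentially identical to the paper's: take $\delta \to 0$ and then $s \to \infty$ in Proposition~\ref{p:regcrit} to rule out critical points (respectively, directional critical points) at level $\ell$, and invoke the implicit function theorem using the $C^2$-smoothness of $f$. One caution: appealing to Theorem~\ref{t:main1}(1) for the closedness of the level curves would be circular, since that theorem is proved via Section~\ref{s:zero}, which in turn relies on this corollary through Lemma~\ref{l:prop}; the paper itself simply attributes the ``simple closed curves'' claim to the implicit function theorem without further comment, and in any case only the local $C^2$-manifold structure and the absence of tangencies are actually used downstream.
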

\begin{proof}
Taking $\delta \to 0$ and then $s \to \infty$ in Proposition \ref{p:regcrit}, we deduce that almost surely $f$ has no critical points at level $\ell$. Then since $f$ is $C^2$-smooth, the components of $ \{ f = \ell\}$ are simple closed $C^2$-smooth curves by the implicit function theorem. Similarly, the restriction of $f$ to $L$ almost surely has no critical points at level $\ell$, which implies that $ \{ f = \ell\}$ has no points of tangency with $L$.
\end{proof}

\begin{proof}[Proof of Proposition \ref{p:regcrit}]
For each multi-index $k$ such that $|k| \le 2$, the random field $\partial^k f$ is a symmetric shot noise field with kernel $\partial^k g$ and mark distribution $\mu$. Define the auxiliary functions
\[  \widehat{\partial^k g}(x) = \sup_{y\in [-1/2,1/2]^{2}} (1+\log(|x+y|))^2 |\partial^k g(x+y)| .\]
Recalling that 
\[  |\partial^{k} g(x)| < c(1+|x|)^{-\alpha-|k|}  \]
for all $|k| \le 2$, each of $\|\widehat{\partial^k g}\|_1$, $\|\widehat{\partial^k g}\|_2$ and $\|\widehat{\partial^k g}\|_\infty$ are finite. Applying equation \eqref{eq:concentr-sup} of Proposition \ref{p:ldb2} (we may take $\beta =1$ in this proposition since $\mu$ has a sub-exponential tail), there exist constants $c_1, c_2 > 0$ such that, for all $|k| \le 2$ and $t \ge 1$,
\begin{equation}
\label{e:regcrit1}
\mathbb{P}(\|\partial^k f\|_{[0,1]^{2}, \infty} \ge c_1 t^2 ) \le c_1 e^{- c_2 t} .
\end{equation}
Now define $t_\delta = |\log \delta|^{1/2}$, and let $\Omega_{\delta }$ be the event that
\[ \max_{|k| = 1,2}\|\partial^k f\|_{[0,1]^{2},\infty} \le  t_\delta^2  \]
which, by \eqref{e:regcrit1}, has probability greater than $1 - c_1 \exp(- c_3 t_\delta)$ for some $c_3 > 0$ and all sufficiently small $\delta > 0$. Choose an integer $n \ge  t_{\delta}^2/\delta $, and cover $B(s)$ with $6 s^2 n^2$ squares $S_{1},\dots ,S_{6 s ^2 n^2}$ of side-length $1/n$. Let $x_{k}$ be the centre of the square $S_{k}$. Then we have, for some $c_4, c_5 > 0$,
\begin{align*}
& \mathbb{P}(\exists x\in B(s) :\nabla f(x)=0 ,|f(x)-\ell|\le \delta ) \\
& \quad \le \sum_{k=1}^{6 s ^2 n^2} \mathbb{P}(\Omega _{\delta },\;\exists x\in S_{k}:\nabla f(x)=0 , |f(x)-\ell|\le \delta )+\mathbb{P}(\Omega _{\delta }^{c})\\
 & \quad \le \sum_{k=1}^{6 s ^2 n^2}\mathbb{P}( |\nabla f(x_{k})|\le t_\delta^{2} /n , |f(x_{k}) - \ell|\le \delta + t_\delta^{2} /n)+\mathbb{P}(\Omega _{\delta }^{c}) \\ 
 & \quad \le c_4 s^2 n^{2} \times (t_\delta^2/n)^2 \times (\delta + t_\delta^{2}/n) +\mathbb{P}(\Omega _{\delta}^{c})  \\
 & \quad \le  2 c_4 s^2 \delta t_\delta^4 +   c_1 \exp(- c_3 t_{\delta}) \le c_5 s^2 \delta t_\delta^4,
\end{align*}
with the third inequality following from the bound on the density of $(f(0), \nabla f(0))$, and the final inequality holding for all sufficiently small $\delta > 0$. 

The proof of the second statement is analogous, except we choose an $n \ge  t_{\delta}/\delta$ and cover $s L_v$ with $2 s  n$ line segments of length $1/n$.
\end{proof}

\medskip

\section{Criticality of the zero level}
\label{s:zero}

In this section we study the `critical' properties of the excursion set $ \{f \ge 0\}$. In particular, we establish (i) positive association for crossing events, (ii) quasi-independence for crossing events, (iii) `box-crossing' estimates, (iv) one-arm decay estimates, and finally (v) the absence of percolation. Together this completes the proof of the first, third and fourth statements of Theorem \ref{t:main1}. Since many of the arguments are standard in percolation theory, we emphasise the aspects that are specific to the shot noise setting.

\smallskip
Throughout this section we suppose that Condition \ref{c:1} holds for a parameter $\alpha > 3$, and also \eqref{a:bd}, but we stress that the log-concavity assumption \eqref{a:lc} plays no role. Moreover, only the first approximation $f_r$ in Section \ref{ss:pert} will be used in this section (except briefly in the proof of Proposition \ref{p:pa} where we use the discretisation $f^\eps$ as a technical tool).

\subsection{Crossing events}
We begin by introducing `crossing events' for rectangles and annuli. Let $s$ be a continuous planar function (which will later stand for realisations of the fields $f$, $f_r$ etc.) and let $\ell \in \mathbb{R}$ be a level. For $\rho_1,\rho_2 > 0$, define $R[\rho_1,\rho_2] = [0, \rho_1] \times [0, \rho_2]$, and let $\{s \in \cross_\ell(\rho_1,\rho_2)\}$ denote the event that there exists a path in $\{s + \ell \ge 0\} \cap R[\rho_1,\rho_2]$ that intersects both the `left' and `right' sides of $R[\rho_1,\rho_2]$, i.e.\ intersects both $\{0\} \times [0, \rho_2]$ and $\{\rho_1\} \times [0, \rho_2]$. Moreover, let $\{s \in \cross^\ast_\ell(\rho_1,\rho_2)\}$ denote the event that there exists a path in $\{s + \ell \le 0\} \cap R[\rho_1, \rho_2]$ that intersects both the `top' and `bottom' sides of $R[\rho_1, \rho_2]$; we show in Lemma \ref{l:prop} below that, in our setting, $\{s \in \cross^\ast_\ell(\rho_1,\rho_2)\}$ is the complement of $\{s \in \cross_\ell(\rho_1,\rho_2)\}$ up to a null set.

\smallskip
Similarly, for $0 < \rho_1 < \rho_2$ define $A[\rho_1, \rho_2] = \{ [-\rho_2, \rho_2]^2 \setminus [-\rho_1, \rho_1]^2 \}$, and let $\{s \in \arm_\ell(\rho_1, \rho_2)\}$ denote the event that there exists a path in  $\{s + \ell \ge 0 \} \cap A[\rho_1,\rho_2]$ that intersects both $[-\rho_1, \rho_1]^2$ and $\partial [-\rho_2, \rho_2]^2$.

\smallskip
Collectively we shall refer to the events $\cross_\ell$, $\cross^\ast_\ell$ and $\arm_\ell$ as `crossing events'. To each crossing event we will associate its \textit{level} $\ell$, and also its \textit{supporting domain}, being the compact domain $D \subset \mathbb{R}^2$ on which the event is defined (i.e.\ either the rectangle $R[\rho_1, \rho_2]$ or the annulus $A[\rho_1, \rho_2]$). Notice that the crossing events $\{s \in \cross_\ell\}$ and $\{s \in \arm_\ell\}$ are increasing in both the function $s$ and the level $\ell$ (i.e.\ $\{s \in \cross_\ell\}$ implies that $\{s' \in \cross_{\ell'}\}$ for any $s' \ge s$ and $\ell' \ge \ell$), whereas the event $\cross^\ast_\ell$ is decreasing in both the field and the level. Henceforth we will refer to these simply as increasing (resp.\ decreasing) crossing events.

\smallskip
Let us state some basic properties of these events:

\begin{lemma}
\label{l:prop} $\,$
\begin{enumerate}
\item Let $E$ be a crossing event. Then $\{f \in E\}$ is a continuity set in the uniform topology on the supporting domain $D$ of $E$, i.e.\ if $\{f \in E\}$ occurs then almost surely there exists a $\delta > 0$ such that $\{f + s  \in E \}$ for all functions $s$ such that $\|s\|_{D, \infty} < \delta$.  
\item The events
\[ \{ f \in  \cross_\ell(\rho_1,\rho_2)  \} \quad \text{and} \quad \{f \in  \cross^\ast_\ell(\rho_1,\rho_2) \} \]
form a partition of the probability space up to a null set.
\end{enumerate}
\end{lemma}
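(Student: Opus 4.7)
The plan for both parts is to deduce them from the almost-sure regularity of the level set $\{f=-\ell\}$ supplied by Corollary \ref{c:rll}, combined with monotonicity/Morse arguments for (1) and a planar duality argument for (2). For (1), let $E$ be a crossing event at level $\ell$ supported on a domain $D$, and restrict attention to the almost-sure event on which $\{f=-\ell\}$ is a disjoint union of smooth simple closed curves, contains no critical points of $f$ in $D$, and meets each line segment of $\partial D$ transversally. Consider first an increasing event $E_\ell \in \{\cross_\ell, \arm_\ell\}$. I claim that on this event, if $f \in E_\ell$ then $f \in E_{\ell-\eps}$ for some $\eps>0$. This suffices: if $\gamma \subset \{f + \ell - \eps \ge 0\} \cap D$ is a realising path, then $f + \ell \ge \eps$ on $\gamma$, so for any continuous $s$ with $\|s\|_{D,\infty} < \eps/2$ one has $(f+s) + \ell \ge \eps/2 > 0$ on $\gamma$, so $\gamma$ realises $E_\ell$ for $f+s$. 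For the claim, it suffices by monotonicity in $\ell$ to exclude the borderline $\ell = \inf\{\ell' : f \in E_{\ell'}\}$; at such a borderline the topology of $\{f+\ell' \ge 0\} \cap D$ relevant for the event changes as $\ell'$ crosses $\ell$, and by standard Morse theory applied to $f|_D$ such a change requires either an interior critical point of $f$ at level $-\ell$ or a critical point of $f$ restricted to some line segment of $\partial D$ at level $-\ell$ -- both excluded on the good event. The decreasing event $\cross^\ast_\ell$ is handled symmetrically by raising the level instead of lowering it.

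For (2), disjointness up to a null set is immediate from (1): on the good event, $\cross_\ell$ coincides with its strict analogue $\{\exists \text{ left--right path in } \{f+\ell>0\}\cap R\}$, and $\cross_\ell^\ast$ with $\{\exists \text{ top--bottom path in } \{f+\ell<0\}\cap R\}$ (by applying the persistence claim at $\ell \mp \eps$); if both strict events occurred, the two paths, being a left--right and a top--bottom crossing of the same rectangle, would have to intersect at a point $p$ at which $f(p)+\ell$ would be simultaneously positive and negative. For exhaustion, work again on the regularity event: $\{f=-\ell\}\cap R$ is a finite disjoint union of smooth closed loops and arcs with endpoints transversally on $\partial R$, partitioning $R$ into finitely many open cells, each labelled positive or negative by the sign of $f+\ell$. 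A standard planar-duality argument on this cell decomposition (analogous to square-lattice Bernoulli percolation duality) then shows that either the positive cells chain together along common arcs into a left--right chain in $R$ (realising $\cross_\ell$), or the negative cells chain together into a top--bottom chain (realising $\cross_\ell^\ast$).

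The main obstacle I anticipate is making the Morse step in (1) fully precise -- specifically, verifying that left--right (or arm) connectivity of $\{f+\ell'\ge 0\}\cap D$ is locally constant in $\ell'$ at regular values of $f|_D$, including along $\partial D$. A clean route is to note that away from critical points the gradient flow of $f$ provides a diffeomorphism between nearby super-level sets, with appropriate modifications near $\partial D$ justified by the transversality of $\{f=-\ell\}$ to $\partial D$; this makes the required topological invariance manifest. The combinatorial duality in (2) is classical and its only subtle point is reducing to the generic smooth configuration, which is exactly what Corollary \ref{c:rll} provides.
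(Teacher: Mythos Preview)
Your proposal is correct and follows essentially the same route as the paper: both reduce (1) to the absence of (stratified) critical points of $f$ at level $-\ell$ via Corollary~\ref{c:rll}, and both reduce (2) to (1) plus a planar duality/intersection argument. Two small differences worth noting: the paper packages your gradient-flow sketch by invoking the stratified Morse lemma directly (citing \cite[Theorem~7]{han02}), and it explicitly includes the corners of $D$ as $0$-dimensional strata in the good event $\Omega_\delta$ --- you should add the condition $f(\text{corner}) \neq -\ell$ to your regularity event, since a level curve passing through a corner can change the crossing topology without producing an interior or boundary-segment critical point.
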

\begin{proof}
These are simple consequence of the regularity of the level lines in Corollary~\ref{c:rll}. We assume that $\ell = 0$, since the proof is identical for all $\ell \in \mathbb{R}$. 

For the first statement, we prove that $\{f \in \cross_0(\rho_1, \rho_2)\}$ is a continuity event (the proof for the other crossing events is similar). We consider $R[\rho_1, \rho_2]$ to be a set stratified by its four boundary line segments and its four corners. For $\delta > 0$ define the event $\Omega_\delta$ that $f$ has no stratified critical points such that $|f(x)| \le \delta$. More precisely, $\Omega_\delta$ occurs if
\begin{itemize}
\item $f$ has no critical point $x \in R[\rho_1, \rho_2]$ such that $|f(x)| \le \delta$;
\item For each boundary line segment $L \subset \partial R[\rho_1, \rho_2]$, $f|_L$ has no critical point $x \in L$ such that $|f(x)| \le \delta$;
\item None of the corners of $R[\rho_1,\rho_2]$ have $|f(x) | \le \delta$.
\end{itemize}
By the (stratified) Morse lemma \cite[Theorem 7]{han02}, if $\Omega_\delta$ occurs then, restricted to $R[\rho_1, \rho_2]$, the (stratified) diffeomorphism class (i.e.\ with respect to diffeomorphims that fix the boundary sides) of $\{f + t \ge 0\}$ is constant for $|t| \le \delta$. Since $\{f  + t \in \cross_0(\rho_1, \rho_2) \}$ depends only on this diffeomorphism class, we deduce that $\{f - t \in \cross_0(\rho_1, \rho_2) \}  \cap \Omega_\delta$ agree for all $|t| \le \delta$.  To conclude remark that, by Corollary~\ref{c:rll}, there is almost surely a $\delta > 0$ such that $\Omega_\delta$ occurs; since $\cross_0$ is increasing, this yields the continuity in the uniform topology.

For the second statement, we assume $\{ f \in  \cross_0(\rho_1,\rho_2)  \}$ and $\{ f \in  \cross^\ast_0(\rho_1,\rho_2)  \}$ both occur and derive a contradiction (the proof that their complements cannot both occur is similar). By the first statement of the lemma, there is almost surely a $\delta > 0$ such $\{ f - \delta \in  \cross_{0}(\rho_1,\rho_2)  \}$ and $\{ f + \delta \in  \cross^\ast_0(\rho_1,\rho_2)  \}$ occur, which implies that there is a path in $\{f - \delta \ge 0\}$ between the left and right sides of $R[\rho_1, \rho_2]$ and a path in $\{f + \delta \le 0\}$ between the top and bottom sides of $R[\rho_1, \rho_2]$. Since any such paths must intersect, we have a contradiction.
\end{proof}

\begin{remark}
\label{r:trunc1}
Note that the conclusions of Lemma \ref{l:prop} are not true for the truncated fields $f_r$. Indeed, there is a positive probability that the zero-level set $\{f_r = 0\}$ covers $R[\rho_1,\rho_2]$, which means that the crossing events are discontinuous in the uniform topology, and moreover,
\[ \P(f_r \in  \cross_0(\rho_1,\rho_2) ) + \P(f_r \in  \cross^\ast_0(\rho_1,\rho_2) ) > 1 .    \]
\end{remark}

We observe a simple consequence of the `self-duality' of the zero level:

\begin{proposition}[Square crossings]
\label{p:sc}
Let $\rho > 0$. Then
\[ \mathbb{P}(f \in \text{Cross}_0(\rho, \rho) ) = 1/2 . \] 
\end{proposition}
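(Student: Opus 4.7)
The plan is to combine the self-duality of the field at the zero level with the rotational symmetry of the kernel, and then invoke the partition property from Lemma~\ref{l:prop}(2).

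First I would use that the mark distribution $\mu$ is symmetric, so that $-f \stackrel{d}{=} f$. Applying this to the decreasing crossing event yields
\[ \mathbb{P}(f \in \cross_0^\ast(\rho,\rho)) = \mathbb{P}(-f \in \cross_0^\ast(\rho,\rho)) = \mathbb{P}(f \in \widetilde{\cross}_0(\rho,\rho)), \]
where $\widetilde{\cross}_0(\rho,\rho)$ denotes the \emph{vertical} crossing event, i.e.\ the existence of a path in $\{f \ge 0\} \cap R[\rho,\rho]$ joining the top and bottom sides of the square.

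Next I would use the rotational and reflection symmetries of $g$ assumed in Condition~\ref{c:1}, together with the translation invariance of the Poisson process $\mathcal{P}$. Together these imply that $f$ is invariant in law under rotation by $\pi/2$ about any fixed point (here we can rotate about the centre of the square $[0,\rho]^2$). Since this rotation maps the square to itself and swaps horizontal with vertical crossings, we get
\[ \mathbb{P}(f \in \widetilde{\cross}_0(\rho,\rho)) = \mathbb{P}(f \in \cross_0(\rho,\rho)). \]

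Finally, Lemma~\ref{l:prop}(2) states that $\{f \in \cross_0(\rho,\rho)\}$ and $\{f \in \cross_0^\ast(\rho,\rho)\}$ partition the probability space up to a null set, so their probabilities sum to $1$. Combining the two displays above gives $2\mathbb{P}(f \in \cross_0(\rho,\rho)) = 1$, hence the result.

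No serious obstacle is expected here: the three ingredients (symmetry of $\mu$, rotational symmetry of $g$, and the duality lemma) have already been established, and the argument is the standard square-crossing self-duality identity. The only mild subtlety is making sure that the rotation by $\pi/2$ is applied about the correct centre so that it preserves the square while swapping the roles of the two pairs of sides, which is immediate for $R[\rho,\rho]$.
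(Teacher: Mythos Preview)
Your proof is correct and essentially identical to the paper's: both use the partition property of Lemma~\ref{l:prop}(2) together with the sign symmetry $f \stackrel{d}{=} -f$ (from the symmetry of $\mu$) and a spatial symmetry of $g$ that swaps horizontal and vertical crossings. The only cosmetic difference is that the paper invokes reflection in the diagonal $\{y=x\}$ rather than rotation by $\pi/2$, but both symmetries are guaranteed by Condition~\ref{c:1} and serve the same purpose.
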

\begin{proof}
By Lemma \ref{l:prop}, $\{ f \in  \cross_0(\rho,\rho)  \}$ and $\{f \in  \cross^\ast_0(\rho,\rho) \}$ partition the probability space. On the other hand, by the symmetry of the mark $\mu$ and the symmetry of the kernel $g$ under reflection in the line $\{y=x\}$, these events have equal probability.
\end{proof}

\subsection{Positive associations}
We next verify the crucial `positive association' property for crossing events (this is one of two places in the proof of Theorem \ref{t:main1} that we use the positivity of the kernel $g \ge 0$, the other being in the proof of Propositions \ref{p:di}):

\begin{proposition}[Positive associations]
\label{p:pa}
Let $E_1$ and $E_2$ be crossing events that are either both increasing or both decreasing. Then
\[ \mathbb{P}( \{f  \in E_1 \} \cap \{f  \in E_2 \} )\ge \mathbb{P}(f  \in E_1 ) \mathbb{P}(f  \in E_2) . \] 
\end{proposition}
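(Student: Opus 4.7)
The plan is to establish the inequality first for the discretised truncated approximation $f_r^\eps$, for which the crossing events become monotone events in finitely many i.i.d.\ real-valued variables, and then pass to the limit as $(\eps, r) \to (0, \infty)$. To implement the reduction, fix $r, \eps > 0$ and set $Z_i := \textrm{Ber}^{\eps^2}_i \cdot Y^\mu_i$ for $i \in \eps \Z^2$, so that
\[  f_r^\eps(x) = \sum_{i \in \eps \Z^2} Z_i \, g_r(x-i) \]
and the $Z_i$ are i.i.d.\ real random variables. On any compact set $D$, the restriction $f_r^\eps|_D$ depends only on the finitely many $Z_i$ with $i$ within distance $r/2$ of $D$, and, because $g_r \ge 0$, the value $f_r^\eps(x)$ is non-decreasing in each $Z_i$ for every $x \in D$. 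Consequently, if $E$ is an increasing (resp.\ decreasing) crossing event with supporting domain $D$, then $\{f_r^\eps \in E\}$ is an increasing (resp.\ decreasing) event in the finite i.i.d.\ family $(Z_i)$.

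With this monotonicity in hand, the classical Harris/FKG inequality for product measures on $\R^n$ (\cite{har60}) yields that bounded coordinate-wise non-decreasing (or non-increasing) functionals of independent real random variables are positively correlated; applied to the indicators of $\{f_r^\eps \in E_1\}$ and $\{f_r^\eps \in E_2\}$, this gives
\[  \P(f_r^\eps \in E_1 \cap E_2) \ge \P(f_r^\eps \in E_1) \, \P(f_r^\eps \in E_2) . \]
To transfer this bound to $f$, I would combine Lemma \ref{l:dis} ($f_r^\eps \Rightarrow f_r$ as $\eps \to 0$) with Lemma \ref{l:trunc} ($f_r \to f$ in probability as $r \to \infty$, hence in law) to extract a diagonal sequence $(\eps_n, r_n) \to (0, \infty)$ along which $f_{r_n}^{\eps_n} \Rightarrow f$ in the uniform topology on compact sets. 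By Lemma \ref{l:prop} each crossing event $\{f \in E_j\}$ is a continuity set for $f$, and the inclusion $\partial(A \cap B) \subseteq \partial A \cup \partial B$ ensures $\{f \in E_1 \cap E_2\}$ is as well, so the Portmanteau theorem lets one pass to the limit in the above inequality and obtain the required positive association.

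The main obstacle I anticipate is the subtlety flagged in Remark \ref{r:trunc1}: crossing events are \emph{not} continuity sets for the truncated field $f_r$ itself, since with positive probability $f_r$ vanishes on regions of positive area, rendering the crossings discontinuous in the uniform topology. This rules out a naive two-stage limit in which one first sends $\eps \to 0$ to establish FKG for $f_r$ and then $r \to \infty$ to deduce it for $f$; instead one must pass directly from $f_r^\eps$ to $f$ via the diagonal limit above, which relies on the regularity of level lines from Corollary \ref{c:rll} (used in Lemma \ref{l:prop}) to guarantee that the crossings are continuity sets for the limit field $f$.
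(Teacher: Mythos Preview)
Your argument is correct, but it takes a more circuitous route than the paper's. The paper does \emph{not} truncate: it works directly with the discretised field $f^\eps$ (full kernel $g$), notes that $\{f^\eps \in E_j\}$ are monotone events in the countable i.i.d.\ family $(Z_i)_{i\in\eps\Z^2}$, applies the Harris/FKG inequality for countable product measures, and then passes to the single limit $\eps\to 0$ via Lemma~\ref{l:dis} and the continuity of crossing events for $f$ (Lemma~\ref{l:prop}). Your detour through the truncation $f_r^\eps$ buys you a reduction to \emph{finitely} many i.i.d.\ coordinates, which makes the FKG step entirely elementary, but at the cost of the diagonal limit and the need to sidestep the discontinuity issue of Remark~\ref{r:trunc1}. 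In short, the paper trades a slightly less elementary FKG statement (countable products) for a one-step limit; you trade a two-parameter limit for a finite-dimensional FKG. Both are valid; the paper's version is cleaner, and the obstacle you anticipated simply does not arise there because truncation is never introduced.
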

\begin{proof}
Without loss of generality suppose the events are both increasing. Recall the spatially discretised field $f^\eps$, and for $i \in \eps \Z^2$ define $Z_i = \textrm{Ber}^{\varepsilon^2}_i Y_i$ so that
\[ f^\eps(x) = \sum_{i \in \eps \Z^2} Z_i g(i-x)  .   \]
Since $g \ge 0$, $\id_{\{f^\eps  \in E_1 \}}$ and $\id_{\{f^\eps  \in E_2 \}}$ can be viewed as increasing functions of the i.i.d.\ sequence $(Z_i)_{i \in \eps \Z^2}$. Hence by the classical Harris/FKG inequality for product measures (see \cite[Section 2.2]{gr99} for the case of Bernoulli random variables, and the proof is identical in the general case),
 \[ \mathbb{P}(\{f^\eps  \in E_1 \} \cap \{f^\eps  \in E_2 \}) \ge \mathbb{P}(f^\eps  \in E_1 ) \mathbb{P}(f^\eps \in E_2 ) . \] 
Since $f^\eps \Rightarrow f$ in law in the uniform topology on compact sets (Lemma \ref{l:dis}), and crossing events are continuity events for $f$ in this topology (Lemma \ref{l:prop}), we conclude by taking $\eps \to 0$.
\end{proof}

\begin{remark}
\label{r:trunc2}
It is natural to expect that the conclusion of Proposition \ref{p:pa} also holds for the truncated field~$f_r$, however the proof does not go through because crossing events are not continuity events for $f_r$ (see Remark \ref{r:trunc1}).
\end{remark}

\subsection{Quasi-independence}
We next show that crossing events that are supported on well-separated domains are approximately independent. 

\smallskip
As a preliminary, we state a general result that bounds the effect of truncation on the probability of crossing events. Recall the constant $\alpha > 3$ from Condition \ref{c:1}.

\begin{proposition}
\label{p:perturbation}
There exist $c_1, c_2> 0$ such that, for every $r,R,t \ge 6$, every compact set $D \subset \mathbb{R}^2$ of diameter at most $R$, and every collection $(E_i)_{i \le n}$ of crossing events whose supports are contained in $D$,
\begin{equation}
\label{e:pqi}
\left| \P \left(f \in E   \right) - \P \left( f_r \in E  \right)  \right| < c_1 n R^2  t^2 r^{1-\alpha} (\log r)^2  \big( (\log r)^2 + (\log t)^2  \big) +  c_1 R^2 e^{-c_2 t}  ,
\end{equation}
where $E = \cap_{i \le n} E_i$.
\end{proposition}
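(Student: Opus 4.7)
The plan is to combine a sprinkling argument with the critical point estimate of Proposition~\ref{p:regcrit}. By stationarity, we may translate so that $D \subset B(R)$, and by adjusting constants we may assume $R \ge 6$. Set
\[
 \delta \;=\; C t^2 r^{1-\alpha}(\log r)^2,
\]
with $C$ the constant from the second statement of Lemma~\ref{l:trunc}, so that the ``bad event''
\[
 \Omega_{\text{bad}} \;=\; \bigl\{ \|f - f_r\|_{D,\infty} > \delta \bigr\}
\]
satisfies $\P(\Omega_{\text{bad}}) \le c_1 R^2 e^{-c_2 t}$ for some $c_1,c_2>0$.

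Assume the crossing events are increasing (the decreasing case is symmetric), and write $E^{\pm}$ for the event obtained from $E = \bigcap_{i\le n} E_i$ by shifting every level $\ell_i$ to $\ell_i \pm \delta$. On $\Omega_{\text{bad}}^c$ we have $f_r(x) + \ell_i \ge 0 \Rightarrow f(x) + \ell_i + \delta \ge 0$ and vice versa, at every $x \in D$, so
\[
 \{f_r \in E\} \cap \Omega_{\text{bad}}^c \;\subseteq\; \{f \in E^{+}\}, \qquad \{f \in E\} \cap \Omega_{\text{bad}}^c \;\subseteq\; \{f_r \in E^{+}\},
\]
and analogous inclusions for $E^{-}$. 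Applying these twice yields
\[
 \bigl|\, \P(f \in E) - \P(f_r \in E) \,\bigr| \;\le\; \P(f \in E^{+}) - \P(f \in E^{-}) \;+\; 2\,\P(\Omega_{\text{bad}}).
\]

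It remains to bound $\P(f \in E^{+} \setminus E^{-})$. Since the events are increasing in the level and $E^{+} \setminus E^{-} \subseteq \bigcup_{i\le n} (E_i^{+}\setminus E_i^{-})$, it suffices to bound, for each $i$, the probability that the crossing event $E_i$ changes as its level ranges over $[\ell_i-\delta,\ell_i+\delta]$. By the stratified Morse argument used in Lemma~\ref{l:prop}, this cannot happen unless $f$ has either an interior critical point in the supporting domain of $E_i$, or a boundary tangency along one of the (at most eight) line segments forming the boundary of that domain, at a value within $2\delta$ of $\ell_i$. The supporting domain lies inside $B(R)$ and each boundary segment has length at most $R$, so Proposition~\ref{p:regcrit} (with parameter $2\delta$) gives
\[
 \P(E_i^{+} \setminus E_i^{-}) \;\le\; c\bigl( R^2 \delta |\!\log \delta|^2 + R\,\delta |\!\log \delta| \bigr) \;\le\; c' R^2 \delta |\!\log \delta|^2.
\]
Summing over $i\le n$, substituting the value of $\delta$, and observing that $|\!\log \delta| \le C'(\log r + \log t)$, one gets $|\!\log\delta|^2 \le 2C'^2((\log r)^2 + (\log t)^2)$, which yields the claimed main term $c_1 n R^2 t^2 r^{1-\alpha}(\log r)^2\bigl((\log r)^2 + (\log t)^2\bigr)$, and combining with the bound on $\P(\Omega_{\text{bad}})$ completes the proof.

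The only subtle step is the passage from the uniform $\delta$-closeness of $f$ and $f_r$ to a statement about crossing probabilities: because $f_r$ is not guaranteed to share the level-line regularity of $f$ (see Remark~\ref{r:trunc1}), one cannot directly compare events at the same level, and the sprinkling-plus-Morse device above is needed to absorb the resulting boundary between ``almost crossing'' and ``crossing''. Once this is set up, the rest is bookkeeping with the estimates from Lemma~\ref{l:trunc} and Proposition~\ref{p:regcrit}.
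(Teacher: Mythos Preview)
Your proof is correct and follows essentially the same route as the paper's. Both arguments choose $\delta = C t^2 r^{1-\alpha}(\log r)^2$ from Lemma~\ref{l:trunc}, control the event $\{\|f-f_r\|_{D,\infty}>\delta\}$, and then use the stratified Morse argument together with Proposition~\ref{p:regcrit} to show that perturbing the level by $\delta$ changes each crossing event with probability at most $c R^2 \delta|\!\log\delta|^2$. The only organisational difference is that the paper packages the Morse step as an explicit good event $\Omega^1_\delta$ (no stratified critical points near the relevant levels) and shows that $\{f\in E\}$ and $\{f_r\in E\}$ coincide on $\Omega^1_\delta\cap\Omega^2_\delta$, whereas you sandwich $\{f_r\in E\}$ between $\{f\in E^-\}$ and $\{f\in E^+\}$ and then bound $\P(E^+\setminus E^-)$; these are two ways of writing the same estimate. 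One cosmetic omission: the stratified Morse event also includes the corner values $|f(x_{\text{corner}})-\ell_i|\le\delta$, but since $f(0)$ has bounded density this contributes $O(\delta)$ and is absorbed by your main term.
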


\begin{proof} 
For each crossing event $E_i$, let $\ell_i$ and $S_i \subset D$ denote its associated level and support respectively. Recall that each $S_i$ has a piece-wise linear boundary (with one or two connected components, depending on the crossing event). For $\delta > 0$, define the event $\Omega^1_\delta$ that, for each~$S_i$:
\begin{itemize}
\item $f$ has no critical point $x \in S_i$ such that $|f(x) - \ell_i| \le \delta$;
\item For each boundary line segment $L \subset \partial S_i$, $f|_L$ has no critical point $x \in L$ such that $|f(x) - \ell_i| \le \delta$;
\item None of the corners of $S_i$ have $|f(x) - \ell_i| \le \delta$.
\end{itemize}
Applying the Morse lemma as in the proof of Lemma \ref{l:prop}, we deduce that the events
\[ \{f \in E_i \} \cap \Omega^1_\delta  \, , \quad \{f + c \in E_i \}\cap \Omega^1_\delta   \quad \text{and} \quad \{f - c \in E_i \}  \cap \Omega^1_\delta   \]
agree for all $i$ and all $c \in [-\delta,\delta]$. Define now the event $\Omega^2_\delta$ that
\[ \| f - f_r \|_{D,\infty} \le \delta . \]
Since crossing events are either increasing or decreasing, on $\Omega^1_\delta \cap \Omega^2_\delta$ the events 
\[ \{f \in E_i\} \quad \text{and} \quad  \{f_r \in E_i \}  \]
agree, and hence so do $\{f \in E\}$ and $\{f_r \in E\}$. To finish the proof, recall that by the second item of Lemma \ref{l:trunc} there exist $c_1, c_2 > 0$ such that $\P(\Omega^2_\delta) > 1 - c_1 R^2 e^{-c_2 t}$ for the choice
\begin{equation}
\label{e:c}
\delta =  c_1   t^{2} \, r^{1-\alpha}  (\log r)^2.
\end{equation}
 Moreover, by Proposition \ref{p:regcrit} and the union bound, there is a $c_3 > 0$ such that, for all small enough $\delta > 0$,
\[    \mathbb{P} (\Omega^1_\delta ) > 1 - c_3 n R^2 \delta |\log \delta|^2  . \]
Hence, setting $\delta$ as \eqref{e:c}, there is a $c_4 > 0$ such that
\[    \mathbb{P} ( \Omega^1_\delta \cap \Omega^2_\delta ) > 1 - c_4 n R^2   t^{2} r^{1-\alpha} (\log r)^2 ((\log r)^2 + (\log t)^2 )  - c_1 R^2 e^{-c_2 t}  \]
which gives the result.
\end{proof}

Our main quasi-independence result is a corollary of the previous proposition.

\begin{theorem}[Quasi-independence]
\label{t:qi}
There exist $c_0,c_1, c_2 > 0$ such that, for every $r,R,t \ge c_0$, every pair of compact sets $D_1 \subset \mathbb{R}^2$ (resp.\ $D_2$) of diameter at most $R$ and such that $r = \text{dist}(D_1, D_2)$, and every pair of collections of $n_1$ (resp. $n_2$) crossing events $(E^1_i)_{i \le n_1}$ (resp.\ $(E^2_i)_{i \le n_2})$ that are supported on $D_1$ (resp.\ $D_2$),
\begin{multline}
\label{e:qi}
\qquad \qquad \left| \P \left(  f \in E^1 \cap E^2  \right) - \P \left( f \in E^1 \right) \P \left( f \in E^2 \right) \right| \\
 < c_1 \max\{n_1,n_2\} R^2   t^{2} r^{1-\alpha}  (\log r)^2 ((\log r)^2 + (\log t)^2 ) + c_1  R^2 e^{-c_2 t} 
\end{multline}
where $E^j = \cap_{i \le n_j} E^j_i$ for $j \in \{1,2\}$.
\end{theorem}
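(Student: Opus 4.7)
\emph{Plan.} The strategy is to approximate $f$ by its $r$-truncated version $f_r$, which is $r$-range dependent by construction: since $\operatorname{supp}(g_r) \subset B(r/2)$, the value $f_r(x)$ is measurable with respect to the marked Poisson points in $B(x, r/2)$. Because $\operatorname{dist}(D_1, D_2) = r$, the $r/2$-neighborhoods of $D_1$ and $D_2$ overlap only on a set of Lebesgue measure zero, which almost surely contains no Poisson points. Hence the restrictions $f_r|_{D_1}$ and $f_r|_{D_2}$ are independent, and in particular
\[ \P(f_r \in E^1 \cap E^2) = \P(f_r \in E^1)\,\P(f_r \in E^2). \]

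Starting from this identity, I would pass from $f_r$ to $f$ using the perturbation estimate. For the single-event probabilities $\P(f \in E^j) - \P(f_r \in E^j)$ this is immediate: Proposition \ref{p:perturbation} applies directly on $D_j$ (of diameter at most $R$) with $n_j$ events. For the joint event a naive application on $D_1 \cup D_2$ would be inadequate when $r \gg R$, because $\operatorname{diam}(D_1 \cup D_2)$ can be as large as $2R + r$, which would replace the $R^2$ in the bound by $(2R+r)^2$; a quick check shows that in the regime $r > R$ this loses a factor of $r^2/R^2$, which cannot be absorbed into the stated $R^2 r^{1-\alpha}$ prefactor.

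To avoid this I would revisit the proof of Proposition \ref{p:perturbation} and observe that its $R^2$ prefactor has two sources, both of which localise correctly over a disjoint union. First, the uniform-norm concentration $\P(\|f - f_r\|_{D_j, \infty} > \delta) \lesssim R^2 e^{-c_2 t}$ of Lemma \ref{l:trunc}(ii) can be applied separately to $D_1$ and $D_2$ and combined by a union bound, keeping the prefactor of order $R^2$. Second, the critical-point estimate from Proposition \ref{p:regcrit} is applied individually to each of the $n_1 + n_2$ supports $S_i \subset D_1 \cup D_2$, each of diameter at most $R$, so the total contribution is at most $c(n_1 + n_2) R^2 \delta |\log \delta|^2$. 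The stratified Morse-lemma coupling argument used in Proposition \ref{p:perturbation} then runs unchanged on the joint good event, yielding
\[ \bigl|\P(f \in E^1 \cap E^2) - \P(f_r \in E^1 \cap E^2)\bigr| \le c(n_1+n_2) R^2 t^2 r^{1-\alpha}(\log r)^2 \bigl[(\log r)^2 + (\log t)^2\bigr] + c R^2 e^{-c_2 t}. \]

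Combining this with the direct application of Proposition \ref{p:perturbation} on each $D_j$ via the triangle inequality, and absorbing $n_1 + n_2 \le 2\max\{n_1, n_2\}$ into the constant, I obtain \eqref{e:qi}. The only non-routine point is the localised re-derivation in the previous paragraph: one must verify that the dominant error contributions in Proposition \ref{p:perturbation} scale with the area of each supporting domain $D_j$ individually rather than with the area of the ambient set $D_1 \cup D_2$. Granted this observation, the argument is otherwise a clean combination of $r$-range dependence and the already-established perturbation estimate.
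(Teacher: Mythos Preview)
Your approach is essentially the same as the paper's: replace $f$ by the $r$-range dependent field $f_r$ via Proposition~\ref{p:perturbation}, use independence of $f_r|_{D_1}$ and $f_r|_{D_2}$, and combine by the triangle inequality. The paper's proof is a single sentence deferring details to \cite[Theorem~4.2]{mv18}; you have supplied precisely those details, including the correct observation that to obtain the $R^2$ (rather than $(2R+r)^2$) prefactor for the joint event one must localise the two sources of error in Proposition~\ref{p:perturbation} to each $D_j$ separately---a point the paper does not make explicit but which is needed for the stated bound when $r\gg R$.
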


\begin{proof}
This follows immediately from Proposition \ref{p:perturbation} by replacing $f$ with its truncated version $f_r$, and noticing that the events $\{f_r \in E_1\}$ and $\{f_r \in E_2\}$ are independent since they depend on domains that are separated by distance $r$ and $f_r$ is an $r$-dependent field (see the proof of \cite[Theorem 4.2]{mv18} for details).
\end{proof}

\begin{remark}
\label{r:qi}
Equation~\eqref{e:qi} (with the setting $t = (\log R)^2)$ implies that two crossing events that are supported on domains of diameter at most $R$ and separated by a distance greater than $r > c_1 R$ are approximately independent in the sense that, as $R \to \infty$,
\begin{equation}
\label{e.qiorder}
 \left| \P \left(  f \in E_1 \cap E_2  \right) - \P \left(f \in E_1 \right) \P \left( f \in E_2 \right) \right| < c_2 R^{3-\alpha} (\log R)^8  \to 0 .
 \end{equation}
In fact, this is precisely the origin of the assumption $\alpha > 3$ in Condition \ref{c:1}.
\end{remark}

\subsection{Box-crossing estimates}

We next establish the `box-crossing' estimates (also known as Russo-Seymour-Welsh estimates), which state that the probability of crossing events for rectangles are bound away from zero and one uniformly in the scale.

\begin{theorem}[Box-crossing estimates]
\label{t:rsw}
For every $\rho_1, \rho_2 > 0$,
\[     \inf_{R \ge 1} \, \mathbb{P}( f \in \cross_0(R\rho_1, R\rho_2) )    > 0  \quad \text{and} \quad    \sup_{R \ge 1}  \, \mathbb{P} (f \in \cross_0(R\rho_1, R\rho_2) )  < 1. \]
\end{theorem}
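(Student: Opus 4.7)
The plan is to adapt the Russo--Seymour--Welsh (RSW) methodology to our setting using the three key ingredients established above: self-duality at level zero (Proposition~\ref{p:sc}), positive associations for monotone crossing events (Proposition~\ref{p:pa}), and quasi-independence for well-separated crossing events (Theorem~\ref{t:qi}, in the sharp form of Remark~\ref{r:qi}). The starting point is the exact identity $\P(f \in \cross_0(R,R)) = 1/2$ for \emph{every} $R \ge 1$, which, combined with the invariance of the law of $f$ under reflection in the coordinate axes and rotation by $\pi/2$ (following from the assumed symmetries of $g$ and $\mu$), gives a scale-free foothold on square crossings.

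To promote square crossings to crossings of long rectangles, I would implement Tassion's approach to RSW, which is particularly well-adapted here because it only uses FKG, symmetry, and quasi-independence beyond square crossing estimates. The core idea is to study a ``partial crossing'' quantity measuring the probability that a horizontal crossing of a square exits its right-hand side through a given sub-interval, and to exploit a dichotomy in the size of the minimal such sub-interval that still carries positive mass. Either this quantity remains bounded below as one moves the exit point toward the corner, in which case FKG gluing of several rotated and reflected copies of the square directly produces a $2\rho \times \rho$ rectangle crossing with uniformly positive probability, or it decays, in which case one forces the probability of a top-to-bottom dual crossing of the square to vanish, contradicting Proposition~\ref{p:sc}. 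Iterating the resulting $2\rho \times \rho$ estimate on dyadic scales and chaining crossings horizontally via FKG yields
\[ \inf_{R \ge 1} \P(f \in \cross_0(R\rho_1, R\rho_2)) > 0 \]
for arbitrary $\rho_1, \rho_2 > 0$, which is the first conclusion of the theorem.

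For the upper bound I would use self-duality together with the symmetries. By Lemma~\ref{l:prop}(2),
\[ \P(f \in \cross_0(R\rho_1, R\rho_2)) + \P(f \in \cross^\ast_0(R\rho_1, R\rho_2)) = 1 . \]
The event $\{f \in \cross^\ast_0(R\rho_1, R\rho_2)\}$ is the existence of a top-to-bottom path in $\{f \le 0\}$; since $-f \equlaw f$ (symmetric mark distribution), and since rotation by $\pi/2$ is a symmetry of the field, this event has the same probability as a horizontal primal crossing of the rotated rectangle, namely $\P(f \in \cross_0(R\rho_2, R\rho_1))$. Applying the lower bound from the first step to the reciprocal aspect ratio $\rho_2/\rho_1$ then gives the desired uniform upper bound.

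The main obstacle will be handling the quantitative, rather than exact, nature of our quasi-independence. Unlike in Bernoulli percolation, events supported on well-separated but bounded domains are only approximately independent, with additive error of order $R^{3-\alpha}(\log R)^8$ at separation of order $R$ (Remark~\ref{r:qi}). Since $\alpha > 3$ this decays polynomially in $R$, so along a bounded number of gluing steps at fixed aspect ratio the accumulated error tends to zero. The technical work lies in carefully tracking the separation between supporting domains and the number of crossing events entering each FKG application, ensuring that the additive corrections can be absorbed without destroying the positive lower bound being built inductively. This bookkeeping essentially parallels the one carried out for planar Gaussian fields in \cite{mv18}, which can be transported to the shot noise setting once Theorem~\ref{t:qi} is in hand.
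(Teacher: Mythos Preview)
Your approach matches the paper's exactly: apply Tassion's RSW machinery, verifying the four hypotheses via Proposition~\ref{p:sc} (square crossings), Proposition~\ref{p:pa} (FKG), Condition~\ref{c:1} (symmetry), and Theorem~\ref{t:qi}/Remark~\ref{r:qi} (quasi-independence); your duality derivation of the upper bound is also correct. One correction to your sketch of Tassion's dichotomy: the second branch does \emph{not} yield a contradiction with Proposition~\ref{p:sc}---the square dual-crossing probability is identically $1/2$ and cannot be forced to vanish---rather, when the landing-interval function $\alpha(n)$ is large, Tassion uses the quasi-independence input to pass to a larger scale and iterate, and both branches ultimately produce the rectangle-crossing bound.
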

\begin{proof}
In \cite{tassion2014crossing}, Tassion showed that the following four conditions, all satisfied in our setting, are sufficient for a translation invariant closed subset of the plane to satisfy the box-crossing estimates (see \cite[Remark 2]{tassion2014crossing}):
\begin{enumerate}
\item The square-crossing property in Proposition \ref{p:sc};
\item The positive association in Proposition \ref{p:pa}; 
\item The symmetry guaranteed by Condition \ref{c:1}; and 
\item The quasi-independence guaranteed by Theorem \ref{t:qi} (and more specifically \eqref{e.qiorder}). \qedhere
\end{enumerate} 
In fact, more recent work (appearing after the first version of this paper) has shown that conditions (1)--(3) are already sufficient \cite{kt20}. Despite this, Theorem \ref{t:qi} is still needed in our work (for instance in the proof of Theorem \ref{t:arm} below, crucial for the main results).
\end{proof}

\subsection{Absence of percolation at the zero level}
Finally, we deduce that the components of $ \{f + \ell \ge 0\} $ are bounded for $\ell \le 0$, completing the proof of the first statement of Theorem~\ref{t:main1}.

\begin{theorem}[One-arm decay and absence of percolation]
\label{t:arm}
There exist $c, c_{\text{Arm}} > 0$ such that, for $1 \leq r \leq R$,
\begin{equation}
\label{e:onearm}
   \mathbb{P} \left(   f  \in \arm_0(r, R) \right)     <  c \left( \frac{r}{R} \right)^{c_{\text{Arm}}}. 
   \end{equation}
In particular, $\{f \ge 0\}$ has bounded components almost surely.
\end{theorem}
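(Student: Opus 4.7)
The plan is to establish the polynomial one-arm decay \eqref{e:onearm} via a standard dyadic-scale renormalisation that combines the box-crossing estimates (Theorem \ref{t:rsw}), positive association (Proposition \ref{p:pa}), and quasi-independence (Theorem \ref{t:qi}); the absence of an unbounded component will then follow by a short topological argument using Corollary \ref{c:rll}. The key preliminary is that, for each $\rho \ge 1$, the event $\mathrm{Circ}^\ast(\rho)$ that $\{f \le 0\}$ contains a continuous loop inside $A[\rho, 2\rho]$ surrounding $[-\rho,\rho]^2$ has probability bounded below by some universal $p > 0$. This follows by realising such a loop as the intersection of four dual rectangle crossings in the four sub-rectangles composing $A[\rho, 2\rho]$, each with probability uniformly bounded below by Theorem \ref{t:rsw} applied to $\cross^\ast_0$ (primal and dual crossing probabilities at the zero level coincide by the distributional symmetry $f \equlaw -f$ combined with the $\pi/2$ rotational symmetry of the kernel), and then combining via Proposition \ref{p:pa} for decreasing events.

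The heart of the argument is an inductive scale recursion. A dual circuit in $A[R', 2R]$ surrounding $[-R', R']^2$ precludes any primal arm from $[-r,r]^2$ to $\partial [-2R, 2R]^2$ (by a direct arm-circuit analogue of Lemma \ref{l:prop}(ii), proven using Corollary \ref{c:rll}), so for $R' = \tfrac{3}{2} R$ we have
\[
\{f \in \arm_0(r, 2R)\} \subset \{f \in \arm_0(r, R)\} \cap \{f \notin \mathrm{Circ}^\ast(R')\} .
\]
The supporting domains of $\arm_0(r, R) \subset [-R,R]^2$ and $\mathrm{Circ}^\ast(R') \subset A[R', 2R]$ both have diameter $O(R)$ and are separated by distance $R/2$. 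The first event is increasing and the second decreasing (each trivially a monotonic singleton), so Theorem \ref{t:qi} in the form of Remark \ref{r:qi} gives
\[
\P(f \in \arm_0(r, R) \cap \mathrm{Circ}^\ast(R')) \ge p\, \P(f \in \arm_0(r, R)) - \eps(R)
\]
with $\eps(R) = O(R^{3-\alpha} (\log R)^8)$. Combining yields the recursion
\[
\P(f \in \arm_0(r, 2R)) \le (1-p)\, \P(f \in \arm_0(r, R)) + \eps(R) ,
\]
which, iterated $K$ times from the trivial bound at scale $R_0 \sim r$ and telescoped, gives
\[
\P(f \in \arm_0(r, 2^K r)) \le (1-p)^K + \sum_{k=0}^{K-1} (1-p)^{K-1-k}\, \eps(2^k r) .
\]
Since $\alpha > 3$, both contributions are polynomially small in $R/r = 2^K$, yielding \eqref{e:onearm} for any $c_{\mathrm{Arm}} < \min\{\log_2(1/(1-p)),\, \alpha - 3\}$.

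For the absence of percolation, \eqref{e:onearm} and translation invariance imply that every $x \in \mathbb{Q}^2$ almost surely lies in a bounded component of $\{f \ge 0\}$. An unbounded component with non-empty interior would contain such a rational point, which is impossible, so any unbounded component must be contained in $\{f = 0\}$; but the latter, by Corollary \ref{c:rll}, is a disjoint union of bounded simple closed curves, a contradiction.

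The main obstacle is controlling the cumulative error in the telescoping step: unlike the Gaussian setting of \cite{mv18}, where quasi-independence errors decay super-polynomially, here they decay only polynomially at rate $R^{3-\alpha}$ (up to logarithmic factors), which is what forces $c_{\mathrm{Arm}} \le \alpha - 3$ and is precisely why Condition \ref{c:1} demands $\alpha > 3$.
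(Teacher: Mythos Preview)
Your proposal is correct and follows essentially the same route as the paper: both peel off annular scales using the box-crossing estimates (you phrase the blocking via dual circuits, the paper via the complementary arm events $A(i)=\arm_0(3^ir,2\cdot 3^ir)$) combined with quasi-independence, and then sum the polynomially decaying errors from Theorem~\ref{t:qi}. One cosmetic slip: you first define $\mathrm{Circ}^\ast(\rho)$ in the annulus $A[\rho,2\rho]$ but later use it in $A[R',2R]$ with $R'=\tfrac{3}{2}R$; the latter thinner annulus is indeed what is needed for the circuit to block an arm to $\partial[-2R,2R]^2$, and Theorem~\ref{t:rsw} still gives the uniform lower bound there, so the argument goes through.
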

\begin{proof}
We follow the proof of \cite[Proposition 4.5]{rv17a}, which shows that \eqref{e:onearm} follows from: 
\begin{enumerate}
\item The positive association in Proposition \ref{p:pa}; 
\item The quasi-independence in Theorem~\ref{t:qi}; and
\item The box-crossing estimates in Theorem \ref{t:rsw}.
\end{enumerate}
Fix $\gamma \in (0,1)$ sufficiently small so such that $2 + (1-\gamma)(1-\alpha) < 0$, possible since $\alpha > 3$. By adjusting the constants $c, c_{\text{Arm}} > 0$ we may assume that $R$ is sufficiently large and that $r \in (R^{1-\gamma}, R/6)$. Let $i_0 =  \lfloor \log_3 ( R / (2r)) \rfloor \ge 1$ and define the crossing events
\[ A(i) =   \arm_0 \big( 3^i r ,  2 \times 3^i  r  \big)  \ , \quad i = 1, \ldots , i_0 .  \]
 Observe that
\[   \arm_0(r, R) \quad \implies \quad \cap_{i = 1, \ldots , i_0} A(i) . \]
Note also that, by Proposition \ref{p:pa} and Theorem \ref{t:rsw} (and standard gluing arguments), there is a $c_1 > 0$ such that $\P( A(i) )   < 1 - c_1$ for all $i = 1, \ldots , i_0$. Then iteratively applying Theorem~\ref{t:qi} (with the setting $t = (\log R)^2$) for $i = 1, \ldots, i_0$, there are $c_2, c_3, \ldots, c_5> 0$ such that
\begin{align*} 
& \P( \arm_0(r, R) )  \le  \P \big( \cap_{i = 1, \ldots , i_0} A(i)   \big)   \\
 & \quad  \le  (1-c_1) \times \P \big( \cap_{i = 2, \ldots , i_0} A(i) \big) +   c_2 i_0 R^2   r^{1-\alpha}   (\log R)^{c_3} + c_2  R^2 e^{-c_4 (\log R)^2}        \\
&  \quad \le  \cdots \\
&  \quad \le (1-c_1)^{i_0} +   \Big(\sum_{i \ge 0} (1-c_1)^i  \Big) c_2 i_0 R^2  r^{1-\alpha}    (\log R)^{c_3} + c_2  R^2 e^{-c_4 (\log R)^2}     \\
& \quad \le (1-c_1)^{i_0} +  c_5 R^{-c_6} 
\end{align*}
for sufficiently large $R$, where in the last inequality we used that $i_0 < (\log R)^2$, $r > R^{1-\gamma}$, and that $2 +(1-\gamma) (1-\alpha) < 0$. Since
\[ (1-c_1)^{i_0} = (1-c_1)^{ \lfloor \log_3 ( R / (2r) ) \rfloor}  \le c_6  (r/R)^{c_7}   \]
for sufficiently large $R$ and some $c_6, c_7 > 0$, we have established~\eqref{e:onearm}.

For the second statement, suppose for contradiction that $\{ f \ge 0\}$ has an unbounded component with positive probability and let $\mathcal{C}$ denote the union of all unbounded components. Since $f$ is $C^2$-smooth, $\mathcal{C}$ has positive Lebesgue measure, and hence by stationarity $\P(0 \in \mathcal{C}) > 0$. This implies that $\{f \in \arm_0(1,R)\}$ occurs with probability bounded below for all $R \ge 1$, which contradicts $\P \left( f \in \arm_0(1,R) \right) \to 0$.
\end{proof}

\begin{remark}
As shown in \cite[Section 4.2]{bg17}, given the quasi-independence in Theorem~\ref{t:qi}, the properties established for the excursion set $ \{f \ge 0\} $ in Theorems \ref{t:rsw} and \ref{t:arm} hold also for the zero level set $\{f = 0\}$.
\end{remark}

\medskip

\section{The phase transition with respect to the level}
\label{s:c1}

We now study the phase transition with respect to the level and complete the proof of Theorem \ref{t:main1}. Throughout this section we assume Condition \ref{c:1}, \eqref{a:bd} and \eqref{a:lc} all hold.

\smallskip
The main intermediate step is to deduce that, for any $\ell > 0$, crossing probabilities for $2R \times R$ rectangles converge to $1$ at a stretched-exponentially rate:

\begin{theorem}
\label{t:sharp}
For every $\ell > 0$ there exist $c_1,c_2 > 0$ such that, for every $R \ge 1$, 
\[  \P \left( f \in \cross_{\ell}(2R,R) \right) \ge 1 -  c_1 e^{-R^{c_2} } . \]
\end{theorem}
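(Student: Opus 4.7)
The plan is to use the log-concavity assumption \eqref{a:lc}, via a comparison between a mark shift and a resampling of the marks (Proposition~\ref{p:di}), to first establish that $\P(f \in \cross_\ell(2R, R)) \to 1$ as $R \to \infty$ for every fixed $\ell > 0$, and then to bootstrap this into a stretched-exponential bound through a multi-scale self-improving argument driven by quasi-independence (Theorem~\ref{t:qi}).

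For the first (qualitative) step, the natural proxy for raising the level is the mark-shifted field $f^h$ from Section~\ref{ss:pert}. By Lemma~\ref{l:h}, $\|f - f^h\|_{B(R),\infty}$ is of order $h\log R$ with overwhelming probability, so a crossing of $f$ at level $\ell$ follows from a crossing of $f^h$ at level $\ell - Ch\log R$ modulo negligible error. The role of log-concavity, through Proposition~\ref{p:di}, is to provide a stochastic comparison between $f^h$ and a suitable resampling of $f$, thereby converting a fixed positive mark shift into a strict gain in the crossing probability. Combined with the box-crossing estimates at the zero level (Theorem~\ref{t:rsw}) and positive association (Proposition~\ref{p:pa}) used to glue crossings of overlapping mesoscopic rectangles, this should give $\P(f \in \cross_\ell(2R,R)) \to 1$ for every fixed $\ell > 0$, with failure probability at worst polynomially small in $R$.

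For the bootstrap, fix $\ell > 0$ and a smaller level $\ell' \in (0, \ell)$. By the first step there is a scale $R_0$ at which $q_0 := 1 - \P(f \in \cross_{\ell'}(2R_0, R_0))$ is below a universal threshold $\epsilon_0$. Gluing $O(1)$ nearly-independent sub-rectangle crossings via FKG and quasi-independence (Theorem~\ref{t:qi}) yields a self-improving recursion of the form
\[ 1 - \P(f \in \cross_{\ell'}(4R, 2R)) \le C \bigl( 1 - \P(f \in \cross_{\ell'}(2R, R)) \bigr)^{1+\delta} + E(R) , \]
for some $\delta > 0$, where $E(R)$ is the quasi-independence error, which is polynomial in $R$ with exponent depending on $\alpha$. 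The slack between $\ell'$ and $\ell$ is used via Lemma~\ref{l:h} to absorb $E(R)$ at each scale by slightly reducing the effective level. Iterating the recursion across $\log_2(R/R_0)$ scales produces failure probability at scale $R$ of order $\epsilon_0^{(1+\delta)^{\log_2(R/R_0)}} = \exp(-R^{c_2})$ for some $c_2 > 0$, yielding the claim.

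The main obstacle is the first (qualitative) step, specifically the use of log-concavity to turn a mark shift into an effective level shift. Without \eqref{a:lc} there is no reason to expect that a fixed positive mark shift gives any gain in the crossing probability, as the mark distribution might place little mass where the shift is `felt' at the zero level; Proposition~\ref{p:di}, underpinned by the bounded Mills ratio of Lemma~\ref{l:mr}, is precisely the tool that makes the comparison quantitative and scale-uniform. The bootstrap is more routine, but the sprinkling must be carefully calibrated so that the polynomial quasi-independence error is absorbed at each stage without exhausting the slack $\ell - \ell'$ before the iteration reaches scale $R$.
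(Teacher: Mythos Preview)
Your two-step outline diverges from the paper's argument and has a genuine gap in Step~1.

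The paper does \emph{not} separate a qualitative convergence step from a bootstrap. Instead it integrates Proposition~\ref{p:di} directly. That proposition is not a ``stochastic comparison between $f^h$ and a resampling of $f$''; it is an OSSS-based differential inequality whose right-hand side is of order $\bigl(\bar r/R + \P(f_r^\epsilon \in \arm_{-\ell}(r,\bar r))\bigr)^{-1}$. Choosing polynomial scales $r = R^\gamma$, $\bar r = R^\zeta$, $h = R^{-\xi}$ with $\xi < 1-\zeta$ and using the polynomial arm decay of Theorem~\ref{t:arm} to bound the denominator by $cR^{\zeta-1}$, integration over $h' \in [0,h]$ yields $1 - \P(f_r^h \in \cross_\ell(2R,R)) \le \exp(-cR^{1-\zeta-\xi})$ in one stroke. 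Sprinkling (Lemma~\ref{l:sprink}) then removes $r$ and $h$ at the cost of doubling the level. No multi-scale recursion is used.

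The gap in your Step~1 is that you cannot hold $h$ fixed. By Lemma~\ref{l:h}, $\|f - f^h\|_{B(R),\infty}$ is of order $h\log R$ with high probability, so for fixed $h>0$ and large $R$ this exceeds $\ell$ and you cannot transfer a crossing of $f^h$ at any positive level back to $f$ at level $\ell$. Hence $h = h_R \to 0$ is forced. But then ``a fixed positive mark shift gives a strict gain'' is no longer available, and you need a quantitative rate for how much the crossing probability improves as $h$ increases from $0$ to $h_R$ --- which is exactly what integrating Proposition~\ref{p:di} provides. Once you do that computation correctly, with the arm probability controlling the denominator, you already obtain stretched-exponential decay and your Step~2 recursion is unnecessary. (A recursion of your type is what Remark~\ref{r:sharp} alludes to for upgrading stretched-exponential to exponential, but that is a separate refinement, not the route to Theorem~\ref{t:sharp} itself.)
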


Theorem \ref{t:main1} can be deduced from Theorem \ref{t:sharp} (along with Theorems \ref{t:rsw} and \ref{t:arm}) in a straightforward way:

\begin{proof}[Proof of Theorem \ref{t:main1}]
The claim that $\{f \ge 0\}$ has bounded components is proven in Theorem~\ref{t:arm}. The boundedness of the components of $\{f + \ell = 0\}$ is an immediately consequence. Indeed, without loss of generality (and by the symmetry in law of $f$ and $-f$) we may suppose that $\ell \le 0$. Then an unbounded component of $\{f + \ell = 0\}$ would be contained in an unbounded component of $\{f + \ell \ge 0\}$, and hence also in an unbounded component of $\{f \ge 0\}$, which is a contradiction.

The existence of a unique unbounded component of $\{f + \ell \ge 0\}$ for $\ell > 0$ is a consequence of Theorem~\ref{t:sharp} by standard gluing arguments. Indeed, by Theorem \ref{t:sharp} we have
\[ \sum_{k \ge 1} \big(1 - \P \big( \cross_\ell(2^{k+1},2^k) \big) \big) < \infty , \]
and so, by the Borel-Cantelli lemma, there exists a $k_0 \ge 1$ such that
\[  \{ f  \in \cross_\ell(2^{k+1},2^k) \} \]
occurs for each $k \ge k_0$. Arranging the $2^{k+1} \times 2^k$ rectangles (and rotated versions) so that the resulting crossings overlap, this implies the existence of an unbounded connected component in $\{f + \ell \ge 0\}$. For uniqueness, if there were two such unbounded components, there would be an unbounded component of $\{f + \ell < 0\}$ separating them, which would contradict what we proved above.

The remainder of the claims in Theorem \ref{t:main1} are proven in Theorems \ref{t:rsw} and \ref{t:arm}. 
\end{proof}

\begin{remark}
\label{r:sharp}
By bootstrapping the estimate in Theorem \ref{t:sharp} (see \cite[Section 5.1]{kes82} and also \cite{mv18}), one could actually deduce the \textit{exponential convergence} of crossing probabilities
\[  \P \left( f \in \cross_{\ell}(2R,R) \right) \ge 1 -  c_1 e^{-c_2 R}  , \]
which is sometimes referred to as the \textit{sharpness of the phase transition}. Since the arguments are very similar to in \cite{mv18}, we do not prove this here.
\end{remark}

The remainder of the section is devoted to proving Theorem \ref{t:sharp} which completes the proof of Theorem \ref{t:main1}. 

\subsection{A differential inequality}
Recall the truncated discretised approximation $f_r^\eps$ of the field $f$ (see Section \ref{ss:pert}). The crux of the argument is to establish a differential inequality for $\P(f_r^\eps \in \cross_{\ell}(2R,R))$ with respect to the level $\ell$. In fact, it will initially be more natural to differentiate with respect to the mean of the mark distribution, i.e.\ the parameters $h = (h_i)$ in the definition of the field $f^{\epsilon,h}_r$ (see again Section \ref{ss:pert}). Recall the definition of the Mills ratio $c_{\text{Mills}} = c_{\text{Mills}}(\mu)$ in \eqref{e:mills}, which is finite by Lemma \ref{l:mr}. Recall also the (lower-right) Dini derivative of a function $F: \mathbb{R} \to \mathbb{R}$,
\[ \frac{d^-}{dx} F(x) = \liminf_{t \downarrow 0} \frac{F(x+t)-F(x)}{t}  .\]

\begin{proposition}[Differential inequality]
\label{p:di}
There exists a $h_0 > 0$ such that, for all $\ell \in \mathbb{R}$, $r \ge 1$, $R > 2r$, $\epsilon > 0$ and $h \in [0, h_0]$,
\begin{align*}
& \frac{\partial^-}{\partial h}  \Big( - \log \big( 1 - \P( f^{\epsilon,h}_r \in \cross_\ell(2R,R) ) \big)  \Big)  \\
& \qquad \ge \frac{1}{2c_{\text{Mills}}}   \frac{ \P (f^{\epsilon,h}_r \in \cross_{\ell}(2R,R))}{ \inf_{\bar r \in (r, R/2)} \left( 2 \bar r / R + \P( f^{\epsilon}_r \in  \arm_{-\ell}(2r, \bar r) ) \right)  } .
\end{align*}
\end{proposition}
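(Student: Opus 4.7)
The plan is to prove this as a Russo--Margulis style differential inequality in the spirit of \cite{mv18}, with the log-concavity assumption entering through the Mills-ratio bound of Lemma \ref{l:mr}. Writing $p(h) = \P(f^{\eps,h}_r \in \cross_\ell(2R,R))$, the LHS equals $\liminf_{t \downarrow 0} (p(h+t)-p(h))/(t(1-p(h)))$, so the target reduces to
\[ p'(h) \cdot Q \ge \frac{1}{2c_{\text{Mills}}}\, p(h)(1-p(h)), \qquad Q = \inf_{\bar r \in (r,R/2)}\!\Big(\frac{2\bar r}{R} + \P\big(f^\eps_r \in \arm_{-\ell}(2r,\bar r)\big)\Big). \]
The proof then combines three ingredients: a per-site Russo decomposition of $p'(h)$, a log-concavity comparison that converts the effect of shifting the mark into the effect of resampling it (the only place where \eqref{a:lc} is used), and a pivotal-to-arm bound.

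First I would decompose $p'(h) = \sum_i \partial_{h_i}\P(f^{\eps,h}_r \in \cross_\ell(2R,R))|_{h_j \equiv h}$, the sum running over $i \in \eps\Z^2$ within distance $r/2$ of $R[2R,R]$. Each summand is non-negative because $g_r \ge 0$ makes $\cross_\ell$ increasing in each $Y^\mu_i$ (this is one of two places where positivity of $g$ is essential). Next, log-concavity of $\mu$ produces the factor $1/c_{\text{Mills}}$: since $\mu(a)/\bar F_\mu(a) \ge 1/c_{\text{Mills}}$ for $a \ge 0$, an infinitesimal shift $dh$ of the mark density adds mass at the origin at rate at least $dh/c_{\text{Mills}}$ times the positive tail mass, giving a ``shift vs.\ resample'' comparison of the form
\[ \partial_{h_i}\P(A) \ge \frac{1}{2c_{\text{Mills}}}\big(\P(A \mid Y^\mu_i \ge 0) - \P(A \mid Y^\mu_i \le 0)\big) \]
for any event $A$ increasing in $Y^\mu_i$. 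Applied to $A = \{f^\eps_r \in \cross_\ell\}$, this dominates each per-site derivative by (up to $1/(2c_{\text{Mills}})$) the probability that $i$ is \emph{pivotal} in the sense that resampling $Y^\mu_i$ as positive switches $\cross^*_\ell$ into $\cross_\ell$.

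For such a pivotal site $i$, the original configuration must contain a path in $\{f^\eps_r + \ell \le 0\}$ from a neighbourhood of $i$ to either the top or bottom side of $R[2R,R]$. When $i$ sits at vertical distance at least $\bar r$ from both sides, the symmetry of $\mu$ and the identity $\{f+\ell \le 0\} \stackrel{d}{=} \{-f+\ell \le 0\} = \{f \ge \ell\}$ convert this dual arm into an $\arm_{-\ell}(2r,\bar r)$ event around $i$. Splitting the sum over sites into those within vertical distance $\bar r$ of a horizontal boundary (a fraction $\le 2\bar r/R$ of all sites in $R[2R,R]$) and bulk sites (where the per-site pivotal probability is bounded by $\P(f^\eps_r \in \arm_{-\ell}(2r,\bar r))$), the uniform per-site pivotal bound is at most $Q$ after optimising over $\bar r$. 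An OSSS/revealment-style inequality, using that $\{\cross_\ell\}$ and $\{\cross^*_\ell\}$ partition the space up to a null set (Lemma \ref{l:prop}) and that a natural revealment algorithm has per-site revealment $\le Q$, then gives $\sum_i \P(i \text{ pivotal}) \ge p(h)(1-p(h))/Q$, which combined with the Russo decomposition and the Mills factor yields the claim.

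The main obstacle will be Step~2: carefully converting the Mills-ratio inequality into the shift-to-resample comparison at the level of crossing probabilities, given that no explicit expression is available for the density of $f^{\eps,h}_r$ conditional on the other marks. The comparison must therefore be performed at the level of $\mu$ itself, exploiting the affine dependence of $f^{\eps,h}_r$ on each individual $Y^\mu_i$ and the monotonicity of $\cross_\ell$ in the marks --- this is one of the key reasons the $\eps$-discretisation was introduced in Section~\ref{ss:pert}. A secondary technicality is that $\cross_\ell$ is not differentiable in $h$ (it is only almost-surely upper semicontinuous), so one really obtains the Dini derivative rather than a classical one; the resulting non-smoothness is harmless provided one sets up the comparison along a well-chosen sequence $h_n \downarrow h$.
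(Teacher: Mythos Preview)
Your overall architecture --- a Russo-type lower bound on $\partial_h p$ followed by OSSS --- is the same as the paper's, and the role of the Mills ratio is correctly identified. The genuine gap is in how you apply OSSS. You argue that a pivotal site $i$ carries a dual arm to a horizontal side of $R[2R,R]$, split sites by vertical distance to the boundary, and extract a ``uniform per-site pivotal bound $\le Q$''. But this is the wrong direction: OSSS reads $p(1-p) \le \tfrac12 \sum_i \delta_i I_i$, so one needs a uniform bound on the \emph{revealments} $\delta_i$ of an explicit randomised algorithm, not an upper bound on the influences~$I_i$. Moreover your split does not give a uniform bound anyway: sites within $\bar r$ of a horizontal side can have influence close to $1$, and $2\bar r/R$ is the \emph{fraction} of such sites, not a bound on their individual influences. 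The paper resolves this by actually specifying the algorithm: draw a uniformly random horizontal line $L$ through $[0,2R]\times[0,R]$ and iteratively reveal the $Z_j$ within distance $r$ of the component of $\{f^{\eps,h}_r + \ell \le 0\}$ meeting $L$. A given site $i$ is then revealed only if either $L$ falls within $\bar r$ of it (probability $\le 2\bar r/R$, coming from the \emph{randomness of $L$}, not from a count of boundary sites) or a dual arm connects the $r$-ball around $i$ to distance $\bar r$; after the symmetry $f^{\eps,-h}_r \stackrel{d}{=} -f^{\eps,h}_r$ and monotonicity in $h$, the latter is at most $\P(f^\eps_r \in \arm_{-\ell}(r,\bar r))$. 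That is how $2\bar r/R + \P(f^\eps_r \in \arm_{-\ell}(r,\bar r))$ arises as a genuine per-site revealment bound, and OSSS then gives $\sum_i I_i \ge 2p(1-p)/Q$.

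A secondary mismatch: your shift-vs-resample inequality produces the sign-conditioned quantity $\P(A \mid Y^\mu_i \ge 0) - \P(A \mid Y^\mu_i \le 0)$, whereas OSSS requires the full resampling influence $I_i$ (resampling $Z_i = \textrm{Ber}^{\eps^2}_i(Y^\mu_i + h)$, not just the sign of $Y^\mu_i$), and you would still need to check one controls the other. The paper sidesteps this by conditioning on $\mathcal{F}_i$, observing that the crossing event is then $\{Z_i > \omega\}$ for an $\mathcal{F}_i$-measurable threshold $\omega$, and comparing $\partial_{h_i}\P(\cdot \mid \mathcal{F}_i) = \eps^2 \mu(\omega - h)$ directly to $\P(Z_i \in A,\, \tilde Z_i \notin A \mid \mathcal{F}_i)$ via a three-case analysis on the position of $\omega$ relative to $0$ and $h$; the restriction $h \le h_0$ enters only in the borderline case $\omega \in [0,h]$.
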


To prove Proposition \ref{p:di} we combine (i) a Russo-type inequality that bounds from below the derivative of $ \P( f^{\epsilon,h}_r \in \cross_\ell(2R,R) )$ in terms of \textit{resampling influences}, and (ii) the OSSS inequality that bounds the resampling influences from below. In the proof we rely crucially on the fact that $g \ge 0$; this is the second place in the proof of Theorem \ref{t:main1} where we use this assumption (along with the proof of Proposition \ref{p:pa}).

\smallskip
Let us first state the Russo-type inequality. Recall that, since the kernel $g_r$ is supported on $B(r/2)$, the event $\{f^{\epsilon,h}_r \in \cross_{\ell}(2R,R)\}$ is measurable with respect to the finite set of independent random variables 
\begin{equation}
\label{e:z}
(Z_i )_{i \in \Lambda}= (\textrm{Ber}^{\eps^2}_i (Y_i + h))_{i \in \Lambda} , 
\end{equation}
where $\Lambda \subset \eps \Z^2$ consists of the $i \in \eps \Z^2$ within distance $r/2$ of $[0, 2R] \times [0, R]$. The \textit{resampling influence} $I_i$ of each coordinate $Z_i$ on $\{f^{\epsilon,h}_r \in \cross_{\ell}(2R,R)\}$ is the probability that resampling $Z_i$ modifies the outcome of $\{f^{\epsilon,h}_r \in \cross_{\ell}(2R,R)\}$. More precisely, if $\mathcal{F}_i$ denotes the $\sigma$-algebra generated by $(Z_j)_{j \in \Lambda \setminus \{i\}}$ and $\tilde{Z}_i$ is an independent copy of $Z_i$,
\begin{equation}
\label{e:i}
  I_i = 2 \E \big[   \P(Z_i  \in \cross_{\ell}(2R,R), \tilde{Z}_i \notin \cross_{\ell}(2R,R) | \mathcal{F}_i )  \big] .
  \end{equation}
 This is well-defined since, conditionally on $\mathcal{F}_i$, the event $\{f^{\epsilon,h}_r  \in \cross_{\ell}(2R,R)\}$ depends only on $Z_i$. 
   
\begin{proposition}[Russo-type inequality]
\label{p:russo}
There exists $h_0 > 0$ such that for all $\ell \in \mathbb{R}$, $r, R, \varepsilon > 0$ and $h \in [0, h_0]$,
\[  \frac{\partial^-}{\partial h}  \P \big( f^{\epsilon,h}_r \in \cross_{\ell}(2R,R) \big)  \ge \frac{1}{4c_{\text{Mills}}}  \sum_{i \in \Lambda} I_i  . \]
\end{proposition}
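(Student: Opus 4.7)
The plan is to combine a Margulis--Russo-type formula (expressing $\partial \P(A)/\partial h$ as a sum over coordinates of single-coordinate derivatives) with a pointwise Mills-ratio comparison relating each such contribution to the corresponding resampling influence.

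First, set $Z_i = \id_{\{\text{Ber}^{\eps^2}_i = 1\}}(Y_i^\mu + h)$ so that $A = \{f^{\eps,h}_r \in \cross_\ell(2R,R)\}$ is measurable with respect to the finite independent family $(Z_i)_{i \in \Lambda}$ with common marginal $\nu^h = (1-\eps^2)\delta_0 + \eps^2 \mu(\cdot - h)$. Because $g_r \ge 0$ and $\cross_\ell$ is increasing, $A$ is monotone increasing in each $Z_i$; hence conditionally on $\mathcal{F}_i = \sigma((Z_j)_{j \ne i})$ there is an $\mathcal{F}_i$-measurable threshold $\tau_i \in [-\infty,+\infty]$ with $\P(A \mid \mathcal{F}_i) = \bar F_{\nu^h}(\tau_i)$. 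Substituting into \eqref{e:i} and using that, given $\mathcal{F}_i$, resampling simply redraws $Z_i$ from $\nu^h$, one obtains $I_i = 2\E[\bar F_{\nu^h}(\tau_i)(1 - \bar F_{\nu^h}(\tau_i))]$.

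Second, I would compute the derivative by extending to variable shifts $(h_j)_{j \in \Lambda}$ and differentiating along the diagonal $h_j \equiv h$. For fixed $(h_j)_{j \ne i}$, the threshold $\tau_i$ depends only on $(B_j, Y_j, h_j)_{j \ne i}$ and is therefore independent of $h_i$; hence differentiating
\[
\bar F_{\nu^{h_i}}(\tau_i) = (1-\eps^2)\id_{\tau_i \le 0} + \eps^2 \bar F_\mu(\tau_i - h_i)
\]
in $h_i$ yields $\eps^2 \mu(\tau_i - h)$ (where the density $\mu$ is continuous by log-concavity). Summing the partial derivatives over $i \in \Lambda$ gives
\[
\frac{\partial^-}{\partial h} \P(A) = \eps^2 \sum_{i \in \Lambda} \E[\mu(\tau_i - h)].
\]

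Third, the proof reduces to the pointwise inequality $\bar F_{\nu^h}(\tau_i)(1 - \bar F_{\nu^h}(\tau_i)) \le C\, c_{\text{Mills}}\, \eps^2 \mu(\tau_i - h)$ for some absolute constant $C$, which combined with the two displays above yields the claim (after adjusting constants). I would handle this via a case split on $\tau_i$. If $\tau_i \le 0$, one has $1 - \bar F_{\nu^h}(\tau_i) = \eps^2 F_\mu(\tau_i - h)$, and by symmetry followed by the definition of $c_{\text{Mills}}$, $F_\mu(\tau_i - h) = \bar F_\mu(h - \tau_i) \le c_{\text{Mills}} \mu(h - \tau_i) = c_{\text{Mills}} \mu(\tau_i - h)$, using $h - \tau_i \ge 0$. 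If $\tau_i \ge h$, then directly $\bar F_{\nu^h}(\tau_i)(1 - \bar F_{\nu^h}(\tau_i)) \le \eps^2 \bar F_\mu(\tau_i - h) \le \eps^2 c_{\text{Mills}} \mu(\tau_i - h)$. The remaining case $0 < \tau_i < h$ is the \emph{main obstacle}: now $\tau_i - h \in (-h, 0)$ lies on the wrong side of zero for the Mills bound. This is precisely where log-concavity enters essentially: applying the Mills bound at zero gives $\mu(0) \ge 1/(2c_{\text{Mills}})$, and continuity of the (symmetric, log-concave) density allows choosing $h_0$ so small that $\mu(h_0) \ge 1/(4c_{\text{Mills}})$. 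Log-concavity and symmetry then force $\mu(\tau_i - h) \ge \mu(h) \ge 1/(4c_{\text{Mills}})$, so the trivial bound $\bar F_{\nu^h}(\tau_i)(1 - \bar F_{\nu^h}(\tau_i)) \le \eps^2$ closes the third case. The restriction $h \in [0, h_0]$ in the statement is precisely what is needed to control this final regime.
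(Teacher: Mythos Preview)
Your argument is essentially the same as the paper's: reduce to a single-coordinate threshold via monotonicity, compute the $h_i$-derivative as $\eps^2\mu(\tau_i-h)$, and compare to the influence by a three-case split on the position of the threshold relative to $0$ and $h$, with the Mills bound handling the outer cases and the choice of $h_0$ handling the middle case. The paper bounds the conditional influence one-sidedly by $p_i\le \min\{\P(Z_i\ge\omega),\P(\tilde Z_i\le\omega)\}$ rather than working with $\bar F(1-\bar F)$, and in the middle case it uses the CDF ratio $\P(Y\le h_0)\le 2\P(Y\ge h_0)$ instead of your density lower bound $\mu(h_0)\ge 1/(4c_{\text{Mills}})$; these are cosmetic differences.

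Two small points to tighten. First, you write the derivative formula as an equality, but since $\partial^-/\partial h$ is only a Dini derivative you should justify the interchange with $\E$ and the sum over $i$; the paper does this via Fatou's lemma and the (monotone) multivariate chain rule, which only give inequalities in the needed direction. Second, your Case~3 bound $\bar F_{\nu^h}(1-\bar F_{\nu^h})\le\eps^2$ combined with $\mu(\tau_i-h)\ge 1/(4c_{\text{Mills}})$ yields $C=4$, which after multiplying by the factor $2$ in $I_i=2\E[\cdots]$ produces $1/(8c_{\text{Mills}})$ rather than the stated $1/(4c_{\text{Mills}})$. This is harmless for the application (only positivity of the constant is used downstream), but if you want the exact constant, either sharpen Case~3 by choosing $h_0$ so that $\mu(h_0)\ge 1/(2c_{\text{Mills}})-\delta$ and absorbing $\delta$ elsewhere, or follow the paper's CDF-ratio route.
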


\begin{proof}
Fix $h_0 > 0$ such that $\P( Y_i \le h_0 )  \le 2  \P ( Y_i \ge  h_0 )$; this is possible since $\mu$ is symmetric and has non-zero density in a neighbourhood of the origin (since its density is log-concave). We shall prove that, for $i \in \Lambda$, $h \in [0, h_0]$, and almost surely with respect to~$\mathcal{F}_i$,
\[ \frac{\partial^-}{\partial h_i} \P( Z_i \in \cross_{\ell}(2R,R)  | \mathcal{F}_i )  \ge (2c_{\text{Mills}})^{-1} \P(Z_i  \in \cross_{\ell}(2R,R), \tilde{Z}_i \notin \cross_{\ell}(2R,R) | \mathcal{F}_i )  \]
where recall that $\tilde{Z}_i$ is an independent copy of $Z_i$. This is enough to conclude the proposition since, by averaging over $\mathcal{F}_i$ and using Fatou's lemma, we have
\[ \frac{\partial^-}{\partial h_i} \P( f^{\epsilon,h}_r  \in \cross_{\ell}(2R,R) )  \ge (4c_{\text{Mills}})^{-1}I_i , \]
and the result follows by the multivariate chain rule.

It remains to fix $i \in \Lambda$, condition on $\mathcal{F}_i$, and prove that $d_i \ge (2c_{\text{Mills}})^{-1} p_i$, where
\[  d_i = \frac{\partial^-}{\partial h_i} \P( Z_i \in \cross_{\ell}(2R,R)  | \mathcal{F}_i )  \]
and
\[  p_i = \P(Z_i  \in \cross_{\ell}(2R,R), \tilde{Z}_i \notin \cross_{\ell}(2R,R) | \mathcal{F}_i ) .\]

Since $g \ge 0$, the field $f^{\eps,h}_r$ is increasing with respect to $Z_i$, and so there exists a threshold $\omega \in [-\infty, \infty]$, measurable with respect to $\mathcal{F}_i$, such that $\{Z_i \in \cross_{\ell}(2R,R)\}$ occurs if $Z_i  > \omega$ and does not occur if $Z_i < \omega$ (whether or not $\{Z_i \in \cross_{\ell}(2R,R)\}$ occurs if $Z_i  = \omega$ may depend on $\mathcal{F}_i$, but it will not be relevant to the argument). We prove $d_i \ge (2c_{\text{Mills}})^{-1} p_i$ separately in three cases:

\smallskip
\noindent \textbf{Case 1: $\omega > h$.}
In this case we use that
\[  p_i \le  \mathbb{P}( Z_i \ge \omega) =   \mathbb{P}( \textrm{Ber}^{\eps^2}_i = 1) \P(Y_i  + h \ge \omega )  =  \epsilon^2 \P ( Y_i \ge \omega - h )  \]
since $\{Z_i \ge \omega\} = \{\textrm{Ber}^{\eps^2}_i = 1, Y_i  + h \ge \omega \}$. Moreover,
\[d_i  =  \mathbb{P}( \textrm{Ber}^{\eps^2}_i = 1) \frac{d}{d h}  \P(  Y_i \ge \omega - h)  = \eps^2 \mu(\omega-h)    \ge  c_\text{Mills}^{-1} \eps^2 \mathbb{P}( Y_i \ge \omega -h)   \]
where in the last inequality we used the definition of $c_{\text{Mills}}$ and the fact that $\omega -  h > 0$. Hence we have proven $d_i \ge c_{\text{Mills}}^{-1} p_i$.

\smallskip
\noindent \textbf{Case 2: $\omega < 0$.} 
In this case we use that
\[  p_i \le  \mathbb{P}( \tilde{Z}_i \le \omega) =   \mathbb{P}( \textrm{Ber}^{\eps^2}_i = 1) \P(Y_i  + h \le \omega )  =  \epsilon^2 \P ( Y_i \le \omega - h ) \]
since $\{Z_i \le \omega\} = \{\textrm{Ber}^{\eps^2}_i = 1, Y_i  + h \le \omega \}$. Moreover, as in the previous case
\[d_i  =  \mathbb{P}( \textrm{Ber}^{\eps^2}_i = 1) \frac{d}{d h}  \P(  Y_i \ge \omega - h)  = \eps^2 \mu(\omega-h)    \ge  c_\text{Mills}^{-1} \eps^2 \mathbb{P}( Y_i \le \omega -h)   \]
where in the last inequality we used the definition of $c_{\text{Mills}}$ and that $\omega -  h < 0$. Hence we have proven $d_i \ge c_{\text{Mills}}^{-1} p_i$ also in this case.

\smallskip
\noindent \textbf{Case 3: $\omega \in [0, h]$.} In this case we use that, as in the first case,
\[  p_i \le  \mathbb{P}( Z_i \ge \omega) =   \mathbb{P}( \textrm{Ber}^{\eps^2}_i = 1) \P(Y_i  + h \ge \omega )  =  \epsilon^2 \P ( Y_i \ge \omega - h )  , \]
and, as in the second case,
\[d_i  =  \mathbb{P}( \textrm{Ber}^{\eps^2}_i = 1) \frac{d}{d h}  \P(  Y_i \ge \omega - h)  = \eps^2 \mu(\omega-h)    \ge  c_\text{Mills}^{-1} \eps^2 \mathbb{P}( Y_i \le \omega -h) .  \]
Hence 
\[ d_i \ge c_{\text{Mills}}^{-1} p_i   \times \frac{ \P ( Y_i \le \omega - h ) }{ \P ( Y_i \ge \omega - h )} . \]
Then since $0 \le h - \omega  \le h \le h_0$, by symmetry and the definition of $h_0$ we have
\[  \frac{ \P ( Y_i \le \omega - h) }{ \P ( Y_i \ge \omega - h)}  = \frac{ \P ( Y_i \ge h - \omega  ) }{ \P ( Y_i \le h - \omega  )} \ge  \frac{ \P ( Y_i \ge  h_0 ) }{ \P( Y_i \le h_0 ) }   \ge \frac{1}{2}  . \]
Hence we have proven $d_i \ge (2c_{\text{Mills}})^{-1} p_i $ in this case.
\end{proof}

Before proving Proposition \ref{p:di} let us recall the OSSS inequality. Let $(X_i)_{i \in \Lambda}$ be an arbitrary finite set of independent random variables. For an event $A$ and $i \in \Lambda$, the resampling influence $I_i$ of $X_i$ on $A$ is the probability that resampling $X_i$ modifies $\id_A$ (this coincides with the definition we gave above). Let $\mathcal{A}$ be a \textit{random algorithm} that determines~$A$, i.e.\ a random adapted procedure that iteratively reveals a subset of $X_i$ and outputs $\id_A$. The \textit{revealment} $\delta_i$ of $X_i$ is the probability that $X_i$ is revealed by the algorithm. Then the OSSS inequality \cite{o05} states that
\[ \text{\textup{Var}}[\id_A] \leq \frac{1}{2} \sum_{i \in \Lambda} \delta_i  I_i  . \]

\begin{proof}[Proof of Proposition \ref{p:di}]
We will use the OSSS inequality to prove that, for  $\ell \in \mathbb{R}$, $r \ge 1$, $R > 2r$, $\epsilon > 0$ and $h \ge 0$,
\begin{align}
\label{e:osss}
\frac{ \sum_{i \in \Lambda} I_i }{ 1 -  \P (f^{\epsilon,h}_r \in \cross_{\ell}(2R,R))}  \ge  \frac{ 2 \P (f^{\epsilon,h}_r \in \cross_{\ell}(2R,R))  }{\inf_{\bar r \in (r, R/2)} \left( 2 \bar r / R + \P( f^{\epsilon}_r  \in \arm_{-\ell}(r, \bar r) ) \right)  }.
\end{align}
Given \eqref{e:osss} the proposition follows since  
\begin{align*}
  \frac{\partial^-}{\partial h} \Big( - \log \big( 1 -  \P( f^{\epsilon,h}_{r} \in \cross_{\ell}(2R,R) ) \big) \Big) & \ge  \frac{  \frac{\partial^-}{\partial h} \P( f^{\epsilon,h}_{r} \in \cross_{\ell}(2R,R) )  }{1-\P(f^{\epsilon,h}_{r} \in \cross_{\ell}(2R,R) )}\\
  &  \ge   \frac{1}{4c_{\text{Mills}}} \frac{ \sum_{i \in \Lambda} I_i }{1-\P( f^{\epsilon,h}_{r} \in \cross_{\ell}(2R,R) )}    
    \end{align*}
where the second inequality is by Proposition \ref{p:russo}.
    
To prove \eqref{e:osss} we apply the OSSS inequality to the random variables $(Z_i)_{i \in \Lambda}$ defined in \eqref{e:z} and the following algorithm (see \cite{mv18}, and also \cite{ab18,bks99,ss10}). First fix a random horizontal line $L = \{y = \mathcal{U}\}$ where $\mathcal{U}(R)$ is an independent $[0, R]$-uniform random variable. Then reveal $Z_i$ for all $i \in \Lambda$ within a distance $r$ from the line $L$; this determines the value of $f^{\eps, h}_r$ within a distance of $r/2$ from $L \cap ([0, 2R] \times [0,R])$. Finally, iteratively reveal $Z_i$ for all $i \in \Lambda$ within a distance $r$ from a component of $\{ f^{\epsilon,h}_r + \ell \le 0\}$ that intersects $L \cap ( [0, 2R] \times [0, R])$, and terminate with value $0$ if a connected component is found to intersect both the top and bottom sides of $[0, 2R] \times [0, R]$, and with value $1$ if no such component is found. 

\smallskip
The key properties of this algorithm are:
\begin{itemize}
\item The algorithm determines $\{f^{\epsilon,h}_r \in \cross_{\ell}(2R,R)\}$ almost surely; and
\item A coordinate $Z_i$ is revealed if and only if there exists a connected component of $\{ f^{\epsilon,h}_r + \ell \le 0\}$ that connects the ball of radius $r$ centred at $i$ to the line $L$.
\end{itemize}
 
Let us give an upper bound for the revealments $\delta_i$ of this algorithm. By symmetry, $\delta_i$ is bounded above by the probability that a connected component of $\{ f^{\epsilon,-h}_r - \ell \ge 0\}$ connects the ball of radius $r$ centred at $i$ to $L$. Since for any $\bar r < R/2$ the distance between $i$ and $L$ is larger than $\bar r$ with probability greater than $1-2 \bar r/R$, by averaging over $L$ we have
\[ \delta_i \le 2 \bar r / R + \P( f^{\epsilon,-h}_r  \in \arm_{-\ell}(r, \bar r) ) , \]
valid for any $\bar r \in (r, R/2)$. Then since  $\P( f^{\epsilon, -h}_r  \in \arm_{-\ell}(r, \bar r) ) )$ is decreasing in $h$, in fact 
\[ \delta_i \le 2 \bar r / R + \P( f^{\epsilon}_r  \in \arm_{-\ell}(r, \bar r) ) . \]
Applying the OSSS equality gives
\begin{align*}
 \textrm{Var} \big[ \id_{\{f^{\epsilon,h}_r \in \cross_{\ell}(2R,R)\}} \big] & =   \P \big(f^{\epsilon,h}_r \in \cross_{\ell}(2R,R) \big) \big(1 -  \P (f^{\epsilon,h}_r \in \cross_{\ell}(2R,R)) \big) \\
& \le  \frac{1}{2} \left( 2 \bar r / R + \P( f^{\epsilon}_r  \in \arm_{-\ell}(r, \bar r) ) \right) \sum_{i \in \Lambda} I_i    , 
\end{align*}
and rearranging gives \eqref{e:osss}.
\end{proof}

\subsection{The phase transition}
We are now ready to prove Theorem \ref{t:sharp}. Before giving the proof, let us describe its outline. Define the positive exponents 
\begin{equation}
\label{e:zeta}
\gamma \in (0, 1) \, , \quad \zeta = \frac{\gamma c_\text{Arm} + 1}{c_{\text{Arm}} + 1} \in (\gamma , 1) \quad \text{and} \quad \xi \in (0,1 - \zeta) 
\end{equation}
where $c_{\text{Arm}} > 0$ is the constant appearing in Theorem \ref{t:arm}. It is easy to check that such a choice is possible, and note that $\zeta$ has been defined to satisfy
\begin{equation}
\label{e:zeta2}
1 - \zeta =  (\zeta-\gamma)c_{\text{Arm}} .
\end{equation}
For the remainder of the section we also define the polynomial scales 
\[ r = r_R = R^\gamma \to \infty \ ,  \quad \bar r = \bar r_R  = R^\zeta \to \infty \quad \text{and} \quad h = h_R = R^{-\xi} \to 0 . \]
We prove Theorem \ref{t:sharp} in two steps. First we use the differential inequality in Proposition~\ref{p:di}, combined with a `sprinkling' procedure (i.e.\ a small raising of the level, see Lemma~\ref{l:sprink}), to prove Theorem \ref{t:sharp} for the field $f_r^{h}$. More precisely, we prove that for $\ell > 0$ there are $c_1, c_2 > 0$ such that for all $R \ge 1$,
\begin{equation}
\label{e:step1}
 \P \big( f_r^h \in \cross_{\ell}(2R,R) \big) \ge 1 -  c_1 e^{- R^{c_2}} . 
\end{equation}
Then we use a second sprinkling to eliminate the parameters $h$ and $r$ at the cost of a second small raising of the level.

\smallskip
We first state the `sprinkling' procedure:

\begin{lemma}[Sprinkling]
\label{l:sprink}
For every $\delta > 0$ there exist $c_1, c_2 > 0$ such that, for $R \ge 1$ and an increasing crossing event $E$ supported on $B(4R)$,
\[\P ( f +\delta  \in E ) \ge  \max\{  \P (f_r  \in E ) ,  \P (f_r^h  \in E)  \} - c_1 e^{-  R^{c_2} }  \] 
and
\[ \P( f_r + \delta \in E ) \ge \P(f \in E) - c_1 e^{-R^{c_2}} .  \]
\end{lemma}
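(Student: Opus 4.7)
The plan is to leverage monotonicity of the increasing crossing event $E$ together with the concentration estimates from Lemmas \ref{l:trunc} and \ref{l:h} to transfer the probability of $E$ between the fields $f$, $f_r$ and $f_r^h$ at the cost of a small additive shift $\delta$. The basic observation is that if $\|f - f_r\|_{B(4R), \infty} \le \delta/2$ holds, then pointwise on $B(4R)$ one has $f + \delta \ge f_r$ and $f_r + \delta \ge f$; since $E$ is supported on $B(4R)$ and increasing, this yields the set-theoretic inclusions $\{f_r \in E\} \subseteq \{f + \delta \in E\}$ and $\{f \in E\} \subseteq \{f_r + \delta \in E\}$ on this event. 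For the third inequality, if additionally $\|f_r - f_r^h\|_{B(4R), \infty} \le \delta/2$ holds, the triangle inequality gives $\|f - f_r^h\|_{B(4R), \infty} \le \delta$ and hence $\{f_r^h \in E\} \subseteq \{f + \delta \in E\}$ by the same reasoning.

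The first step is then to bound the probability of the first good event using the symmetric version of Lemma \ref{l:trunc}, applied with $s = 4R$, $r = R^\gamma$ and $t = R^a$ for some small exponent $a > 0$ to be chosen. This gives $\mathbb{P}(\|f - f_r\|_{B(4R), \infty} > c_1 R^{2a + \gamma(1-\alpha)}(\log R)^2) \le c_1 (4R)^2 e^{-c_2 R^a}$. Choosing $a$ strictly smaller than $\gamma(\alpha-1)/2$, which is positive since $\gamma > 0$ and $\alpha > 3$, forces the threshold $R^{2a+\gamma(1-\alpha)}(\log R)^2$ to tend to zero and hence to be $\le \delta/2$ for $R$ sufficiently large; the corresponding failure probability is then at most $e^{-R^{c_2'}}$ for some $c_2' > 0$ and all large $R$.

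The second step is analogous. Since $(f_r^h - f_r)/h$ is precisely the constant-mark shot noise $\sum_{i \in \mathcal{P}} g_r(x-i)$, and $g_r$ has $L^1$, $L^2$ and $L^\infty$ norms bounded by those of $g$, the proof of Lemma \ref{l:h} carries over verbatim to yield $\mathbb{P}(\|f_r - f_r^h\|_{B(4R), \infty} > c_1 t h) \le c_1 (4R)^2 e^{-c_2 t}$. Taking $h = R^{-\xi}$ and $t = R^{\xi/2}$ makes the error $c_1 R^{-\xi/2} \le \delta/2$ for $R$ large, with a second stretched-exponentially small failure probability. A union bound over these two good events then delivers all three stated inequalities with a common stretched-exponential rate $c_2 > 0$ and some $c_1$. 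The main obstacle is simply bookkeeping the exponents so that both concentration estimates close, which reduces to verifying that the constraints $\alpha > 3$ in Condition \ref{c:1} and $\xi > 0$ from \eqref{e:zeta} leave enough room to choose positive $a$ and $t$-exponent above — both of which are immediate. There is no deeper difficulty.
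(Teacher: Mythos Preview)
Your proof is correct and follows essentially the same approach as the paper's own proof: both use the concentration bounds of Lemmas \ref{l:trunc} and \ref{l:h} with $t$ a small positive power of $R$ to show that $\|f-f_r\|_{B(4R),\infty}$ and $\|f_r-f_r^h\|_{B(4R),\infty}$ are at most a fixed $\delta$ except on a stretched-exponentially small event, and then invoke monotonicity of $E$. Your choice of exponents ($a<\gamma(\alpha-1)/2$ and $t=R^{\xi/2}$) is equivalent to the paper's ($\gamma'\in(0,\gamma)$ with $\alpha>3$, and $\xi'\in(0,\xi)$), and your use of $\delta/2$ thresholds to close the triangle inequality for $f_r^h$ is slightly more careful than the paper, which simply applies the bounds with threshold $\delta$ separately.
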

\begin{proof}
By adjusting the constants $c_1, c_2 > 0$ it is sufficient to prove the statements for sufficiently large $R$.

First, by Lemma \ref{l:h} (setting $t = R^{\xi'}$ for some $\xi' \in (0, \xi)$), there are $c_1,c_2 > 0$ such that
\[ \P (    \| f_r - f_r^h \|_{B(4R), \infty} > R^{\xi'-\xi}) \le  c_1 R^{2} e^{- c_2 R^{\xi'}} . \]
Since $\xi'  < \xi$, this gives
\begin{equation}
\label{e:pert1}
\P ( \| f_r - f_r^h \|_{B(4R), \infty} > \delta ) \le   e^{-R^{c_3}}
\end{equation}
for some $c_3 > 0$ and sufficiently large $R$.

Second, by the second item of Lemma \ref{l:trunc} (setting $t = R^{\gamma'}$ for some $\gamma' \in (0, \gamma)$), there are $\alpha > 3$ and $c_4,c_5> 0$ such that
\[ \P ( \| f - f_r \|_{B(4R), \infty} \ge  c_4 R^{2\gamma'} r^{1-\alpha} (\log r)^2 ) < c_4 R^2 e^{- c_5 R^{\gamma'}}  \]
for sufficiently large $R$. Since $\gamma' < \gamma$ and we assume $\alpha > 3$, this gives
\begin{equation}
\label{e:pert2}
\P ( \| f - f_r \|_{B(4R), \infty} > \delta ) \le  e^{- c_6 R^{\gamma'}}  
\end{equation}
for some $c_6 > 0$ and sufficiently large $R$. 

The lemma then follows from the simple fact that, if $f_1$ and $f_2$ are fields on $\R^2$ such that $\P(\|f_1-f_2\|_\infty \ge \delta) < s$ for some $\delta,s>0$, and $A$ is an increasing event, then 
 \begin{equation*}
  \P(f_1 + \delta \in A) \ge \P( \{f_2 \in A\} \cap \{\|f_1-f_2\|_\infty \le \delta\}  ) \ge \P(f_2 \in A) -  s . \qedhere
  \end{equation*}
 \end{proof}

\begin{proof}[Proof of Theorem \ref{t:sharp}]
We begin by establishing \eqref{e:step1}. Applying Proposition \ref{p:di} (note that $r \ll \bar r \ll R$) there is a $c_1 > 0$ such that, for $h' \in [0, h]$, $\varepsilon > 0$, and sufficiently large $R$, 
\[ \frac{\partial^-}{\partial h'} \Big( - \log \big(1  - \P (  f^{\epsilon,h'}_r \in \cross_{\ell/2}(2R,R) ) \big) \Big)   \ge \frac{  c_1 \P (f^{\epsilon,h'}_r \in \cross_{\ell}(2R,R))}{  2 \bar r / R + \P( f^{\epsilon}_r \in  \arm_{-\ell}(r, \bar r) )   }  \]
Integrating both sides of this inequality over $h' \in [0, h]$ gives
\[ \frac{1 - \P \big(  f^{\epsilon,h}_r \in \cross_{\ell}(2R,R) \big)}{1  - \P (  f^{\epsilon,0}_r \in \cross_{\ell}(2R,R) )}  \le e^{- c_1  h ( 2R^{\zeta-1} + \P( f^{\epsilon}_r \in  \arm_{-\ell}(r, \bar r) )^{-1}  }    .  \]
Taking $\eps \to 0$, by Lemmas~\ref{l:dis} and \ref{l:prop} we deduce
\[ \frac{1 - \P \big(  f^h_r \in \cross_{\ell}(2R,R) \big)}{1  - \P (  f_r \in \cross_{\ell}(2R,R) )}  \le e^{- c_1 h ( 2R^{\zeta-1} + \P( f_r \in  \arm_{-\ell/2}(r, \bar r) )^{-1}  }    .  \]
By Lemma \ref{l:sprink}, Theorem \ref{t:arm} and \eqref{e:zeta2}, there are $c_2, c_3 > 0$ such that
\begin{align*}
\P (f_r \in \arm_{-\ell}(r, \bar r) ) & \le \P (f \in \arm_0(r, \bar r) ) + e^{- R^{c_2} }  \\
& \le  c_3 (r/\bar r)^{c_{\text{Arm}}} = c_3 R^{(\gamma-\zeta)c_{\text{Arm}}}   = c_3 R^{\zeta-1}   
\end{align*}
for sufficiently large $R$, and similarly by  Lemma \ref{l:sprink} and Theorem \ref{t:rsw}, 
\[ \P ( f_r \in \cross_{\ell}(2R,R) ) \ge \P (f \in \cross_0(2R,R) ) - e^{- R^{c_2}} \ge c_4 \]
for some $c_4 > 0$. Hence we deduce for some $c_5 > 0$,
\[  \P \big(  f^h_r \in \cross_{\ell}(2R,R) \big) \ge 1 - (1-c_4) e^{- c_5  h R^{1- \zeta}  }  =   1 - (1-c_4) e^{- c_5 R^{-\xi} R^{1-\zeta}  }    .  \]
Since $1-\zeta-\xi > 0$ by the choice of parameters in \eqref{e:zeta}, we conclude~\eqref{e:step1}.

To finish the proof we apply Lemma \ref{l:sprink} a second time. Together with \eqref{e:step1} this gives 
\begin{align*}
\P \big( f  \in \cross_{2\ell}(2R,R) \big)   \ge  \P \big( f^h_r \in \cross_{\ell}(2R,R) \big)  -  e^{-R^{c_6}}   \ge 1 - 2 e^{-R^{c_6}}  
\end{align*}
for some $c_6 > 0$ and sufficiently large $R$, which concludes the proof.
\end{proof}

\medskip

\section{The phase transition with respect to the intensity}
\label{s:c2}

In this section we show how the arguments in Section \ref{s:c1} can be adapted to prove the phase transition with respect to the intensity of positive marks without the log-concavity assumption \eqref{a:lc}. Throughout this section we assume Condition \ref{c:1} holds for a parameter $\alpha > 4$, and that there is a $\eta_0 > 0$ such that \eqref{a:bd} holds for $f^\eta$ for all $\eta \in [0, \eta_0]$. 

\smallskip
As in Section \ref{s:c1}, the main step is to establish the rapid convergence of crossing probabilities for $2R \times R$ rectangles, although in this case we prove only polynomial convergence:

\begin{theorem}
\label{t:sharp2}
For every $\eta > 0$ there exist $c_1,c_2 > 0$ such that, for every $R \ge 1$, 
\[  \P \left( f^\eta \in \cross_{0}(2R,R) \right) \ge 1 -  c_1 R^{-c_2}  . \]
\end{theorem}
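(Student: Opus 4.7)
The plan is to mirror the proof of Theorem \ref{t:sharp}, but to differentiate crossing probabilities with respect to the intensity parameter $\eta$ rather than the mean-shift parameter $h$. Doing so circumvents the Russo-type inequality of Proposition \ref{p:russo} — and hence the log-concavity assumption — at the cost of weaker sprinkling.

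\textbf{Sign decomposition and Russo for the intensity.} Write the discretised truncated field in its sign representation,
\[
f_r^{\eta,\eps}(x) = \sum_{i \in \eps \Z^2} \textrm{Ber}^{\eps^2}_i \, |Y_i^\mu| \, \sigma_i^\eta \, g_r(x-i),
\]
where $\sigma_i^\eta \in \{-1,+1\}$ is independent of everything else with $\P(\sigma_i^\eta = +1) = 1/2 + \eta$. Since $g \ge 0$, the event $\{f_r^{\eta,\eps} \in \cross_0(2R,R)\}$ is monotone increasing in each $\sigma_i^\eta$, and a direct Russo calculation for Bernoulli variables gives
\[
\frac{d}{d\eta} \P\bigl(f_r^{\eta,\eps} \in \cross_0(2R,R)\bigr) = \sum_{i \in \Lambda} J_i, \qquad J_i := \P(\sigma_i^\eta \text{ is pivotal}).
\]
This identity (which replaces Proposition \ref{p:russo}) uses neither the Mills ratio nor log-concavity.

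\textbf{OSSS and the differential inequality.} Apply OSSS to the signs $(\sigma_i^\eta)$, conditional on the Bernoulli and magnitude variables, using the same explore-and-reveal algorithm as in Proposition \ref{p:di}: reveal the sign data along a random horizontal line, then iteratively along every connected component of $\{f_r^{\eta,\eps} < 0\}$ meeting the line. The revealment of $\sigma_i$ is bounded by $2\bar r/R + \P(f_r^\eta \in \arm_0(r, \bar r))$ and the sign-resampling influence equals $(1/2 - 2\eta^2)\,J_i$, which is at least $(3/8)J_i$ for $\eta \in [0, 1/4]$. Averaging the conditional OSSS bound over $(\textrm{Ber}_j, |Y_j|)_j$, and exploiting the involution $\sigma \mapsto -\sigma$ at $\eta = 0$ together with continuity in $\eta$ to lower bound $\E[\var(\id_{\cross_0} \mid \textrm{Ber}, |Y|)]$ by $c\,\P(\cross_0)(1 - \P(\cross_0))$ for small $\eta$, yields the differential inequality
\[
\frac{d}{d\eta} \P\bigl(f_r^{\eta,\eps} \in \cross_0(2R,R)\bigr) \ge \frac{c\,\P(\cross_0)(1-\P(\cross_0))}{2\bar r/R + \P(f_r^\eta \in \arm_0(r, \bar r))}.
\]

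\textbf{Integration and sprinkling.} Choose polynomial scales $r = R^\gamma$, $\bar r = R^\zeta$ as in \eqref{e:zeta}. Theorem \ref{t:arm} bounds the arm probability for $f$ by $R^{\zeta-1}$, and a small sprinkle via Lemma \ref{l:sprink2} transfers this bound to $f_r^\eta$. Theorem \ref{t:rsw} together with monotonicity guarantees $\P(\cross_0) \ge c_0 > 0$ for all $\eta \ge 0$. Integrating the differential inequality from $0$ to $\eta/2$, taking $\eps \to 0$ (using Lemmas \ref{l:dis} and \ref{l:prop}), and finally applying Lemma \ref{l:sprink2} a second time to remove the truncation by raising $\eta$ by an additional $\eta/2$, yields the claimed polynomial-in-$R$ bound.

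\textbf{Main obstacle.} The chief obstacle is the unavailability of mean-shift sprinkling (Lemma \ref{l:sprink}): without log-concavity a small constant mean shift is no longer a useful surrogate for raising the intensity, and so we must sprinkle in $\eta$ itself, which only provides polynomial-in-$R$ room to absorb approximation errors. This forces the truncation error $R^2 r^{1-\alpha}(\log r)^2$ to be polynomially small with margin, which is precisely the role of the strengthened hypothesis $\alpha > 4$ in Theorem \ref{t:main2}. A secondary technical point is justifying the uniform revealment bound and the lower bound on the expected conditional variance in the conditional OSSS step; this is where the $\eta = 0$ involution symmetry is essential.
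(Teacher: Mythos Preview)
Your overall strategy matches the paper's: differentiate crossing probabilities in $\eta$, combine a Russo-type bound with OSSS and the exploration algorithm, integrate, then remove the truncation. But two of your implementation steps are genuinely wrong, and a third is more delicate than you suggest.

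\textbf{The truncation error and the role of $\alpha > 4$.} You claim the truncation error is $R^{2} r^{1-\alpha}(\log r)^{2}$ and that this is why $\alpha > 4$ is needed. This is incorrect: for $\eta > 0$ the field $f^{\eta}$ is \emph{not} a symmetric shot noise field (its effective mark has positive mean), so the second item of Lemma~\ref{l:trunc} does not apply and one only gets the weaker rate $R^{2} r^{2-\alpha}$ from the first item. With $r = R^{\gamma}$ this is polynomially small precisely when $\gamma(\alpha-2) > 2$; combined with $\gamma < 1$ this forces $\alpha > 4$. Your claimed rate $r^{1-\alpha}$ would only require $\alpha > 3$. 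Lemma~\ref{l:sprink2} is stated with the correct $r^{2-\alpha}$ rate.

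\textbf{Sprinkling does not work by raising $\eta$.} You say Lemma~\ref{l:sprink2} ``removes the truncation by raising $\eta$ by an additional $\eta/2$''. That is not how the lemma operates, and no such argument is available: increasing $\eta$ flips a Poisson-thinned subset of signs from $-$ to $+$, which makes $f^{\eta}-f^{\eta/2}$ nonnegative but not uniformly bounded below, so it cannot absorb the sup-norm error $\|f^{\eta}-f^{\eta}_{r}\|_{B(4R),\infty}$. Instead, Lemma~\ref{l:sprink2} compares $f^{\eta}$ and $f^{\eta}_{r}$ at the \emph{same} $\eta$, via the Morse-theoretic continuity argument of Proposition~\ref{p:perturbation} (critical points of $f^{\eta}$ are unlikely to lie near level~$0$). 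This is exactly why \eqref{a:bd} must be assumed for $f^{\eta}$ on an interval $[0,\eta_{0}]$ and not only for $f$. The paper integrates the differential inequality over $\eta' \in [0,\eta]$ and then applies Lemma~\ref{l:sprink2} once at the final $\eta$; there is no intermediate $\eta/2$.

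\textbf{Conditional OSSS is avoidable.} Applying OSSS to the signs conditionally on $(\textrm{Ber},|Y|)$ leaves you with $\sum_{i}\E[\delta_{i}^{\text{cond}} I_{i}^{\text{cond}}]$ on the right and $\E[\var(\id\mid\textrm{Ber},|Y|)]$ on the left; the former has a correlation problem you do not address, and the ``involution plus continuity in $\eta$'' lower bound for the latter is not uniform in $R$ (the number of relevant signs grows with $R$). The paper bypasses both issues: it applies OSSS \emph{unconditionally} to the product variables $Z_{i} = \textrm{Ber}^{\eps^{2}}_{i}(|Y^{\mu}_{i}|\id_{\{\mathcal{U}_{i} \le 1/2+\eta\}} - |Y^{\mu}_{i}|\id_{\{\mathcal{U}_{i} \ge 1/2+\eta\}})$, so the unconditional variance $\P(\cross_{0})(1-\P(\cross_{0}))$ appears directly, and then proves (Proposition~\ref{p:russo2}, via a short threshold case analysis in $\omega$) that $\partial^{-}_{\eta}\P(\cross_{0}) \ge \tfrac{1}{2}\sum_{i} I_{i}$ where $I_{i}$ is the \emph{full} resampling influence of $Z_{i}$. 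This immediately gives the differential inequality for $-\log(1-\P(\cross_{0}))$ in the form of Proposition~\ref{p:di2}, with the arm probability of the symmetric field $f_{r}^{\eps}$ (at $\eta=0$) in the denominator by monotonicity.
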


\begin{proof}[Proof of Theorem \ref{t:main2}]
Since we assume $g \ge 0$, the field $f^\eta$ is increasing in $\eta$, and so the boundedness of the components of $\{f^\eta \ge 0\}$ for $\eta \le 0$ follows from the equivalent statement about $\{f \ge 0\}$ in Theorem \ref{t:main1}.

The fact that $\{f^\eta \ge 0\}$ has a unique unbounded component follows from Theorem \ref{t:sharp2} in the same way as in the proof of Theorem \ref{t:main1}; the fact that we have only polynomial, instead of stretched-exponential, convergence is irrelevant (in fact all that we need is that $1-\P \left( f^\eta \in \cross_{0}(2R,R) \right)$ is summable over the sequence $R_k = 2^k$).
\end{proof}

It remains to prove Theorem \ref{t:sharp2}. As in Section \ref{s:c1} the crux is to prove a Russo-type inequality analogous to Proposition \ref{p:russo}. Let us first introduce a truncated discretised approximation $f^{\eps, \eta}_r$ of $f^\eta$; this will play the same role as $f^{\eps, h}_r$ in Section \ref{s:c1}. For $\eps > 0$, $(\eta_i)_{i \in \eps \Z^2}$ a real-valued vector, and $r > 0$ define
\[      f^{\eps,\eta}_r(x) = \sum_{i \in \eps \Z^2} \textrm{Ber}^{\eps^2}_i  \Big( |Y_i| \id_{\{ \mathcal{U}_i \le 1/2 + \eta_i  \}}  -  |Y_i|  \id_{\{ \mathcal{U}_i \ge  1/2 + \eta_i \}} \Big) g_r(x-i) ,  \]
where $\textrm{Ber}^{\eps^2}_i$ and $g_r$ are as in the definition of $f^{\eps,h}_r$. In the case $\eps = 0$ we define instead
\[     f^\eps_r(x) =  f^{0,\eta}_r(x) = \sum_{i \in \mathcal{P}}  \Big( |Y_i| \id_{\{ \mathcal{U}_i \le 1/2 + \eta_i  \}}  -  |Y_i|  \id_{\{ \mathcal{U}_i \ge  1/2 + \eta_i \}} \Big) g_r(x-i) .  \]
If $\eta \in \R$ we understand this as setting $\eta_i \equiv \eta$.

\smallskip  For $R > 0$, the event $\{ f^{\epsilon,\eta}_r \in \cross_0(2R,R) \}$ is measurable with respect to the finite set of independent random variables
\begin{equation}
\label{e:z2}
(Z_i )_{i \in \Lambda}= (\textrm{Ber}^{\eps^2}_i ( |Y_i| \id_{\{ \mathcal{U}_i \le 1/2 + \eta_i  \}}  -  |Y_i|  \id_{\{ \mathcal{U}_i \ge  1/2 + \eta_i \}} )  )_{i \in \Lambda} , 
\end{equation}
where $\Lambda \subset \eps \Z^2$ is defined as in \eqref{e:z}. For each $i \in \Lambda$ define the resampling influence
\[     I_i = \frac12 \E[ \P( Z_i \in \cross_0(2R,R) , \tilde{Z}_i \notin \cross_0(2R,R) | \mathcal{F}_i  ) ]   , \]
 where $\tilde{Z}_i$ is an independent copy of $Z_i$, and $\mathcal{F}_i$ is the $\sigma$-algebra generated by $(Z_j)_{j \in \Lambda \setminus \{i\}}$.

\begin{proposition}[Russo-type inequality; second version]
\label{p:russo2}
For all $r, R, \varepsilon > 0$ and $\eta \in [-1/2,1/2]$,
\[  \frac{\partial^-}{\partial \eta}  \P \big( f^{\epsilon,\eta}_r \in \cross_0(2R,R) \big)  \ge \frac{1}{2}  \sum_{i \in \Lambda} I_i  . \]
\end{proposition}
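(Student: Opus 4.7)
The approach is to establish the pointwise conditional inequality
\begin{equation*}
\frac{\partial}{\partial \eta_i} \P(f^{\eps,\eta}_r \in \cross_0(2R,R) \mid \mathcal{F}_i) \;\ge\; \P\bigl(Z_i \in \cross_0(2R,R),\; \tilde Z_i \notin \cross_0(2R,R) \mid \mathcal{F}_i\bigr)
\end{equation*}
for each $i \in \Lambda$ and almost surely w.r.t.\ $\mathcal{F}_i$, then average, sum over the finite index set $\Lambda$ via the multivariate chain rule, and observe that this gives $\partial^-_\eta \P(A) \ge 2 \sum_i I_i \ge \tfrac12 \sum_i I_i$ (a factor of $4$ of slack). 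The crucial conceptual point relative to Proposition~\ref{p:russo} is that no Mills-ratio bound is required: because $\eta$ parameterises a sign-probability rather than a mean shift, the coordinate-wise derivative produces a probability directly, not a density.

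\textbf{Threshold form and coordinate expansion.} Fix $i \in \Lambda$ and condition on $\mathcal{F}_i$. Since $g \ge 0$, the field $f^{\eps,\eta}_r$ is non-decreasing in $Z_i$ and $\cross_0$ is an increasing event, so there is an $\mathcal{F}_i$-measurable threshold $\omega \in [-\infty,\infty]$ such that $\{f^{\eps,\eta}_r \in \cross_0(2R,R)\}$ coincides, up to boundary atoms, with $\{Z_i > \omega\}$. Writing $Y_i = |Y^\mu_i|$ and $T(y) = \id_{y > \omega}$, the law of $Z_i$ conditional on $\mathcal{F}_i$ is a three-point mixture on $\{0, Y_i, -Y_i\}$ with weights $1-\eps^2$, $\eps^2(\tfrac12 + \eta_i)$ and $\eps^2(\tfrac12 - \eta_i)$, so that
\begin{equation*}
\P(f^{\eps,\eta}_r \in \cross_0 \mid \mathcal{F}_i) = (1-\eps^2)T(0) + \eps^2\,\E\!\left[(\tfrac12 + \eta_i) T(Y_i) + (\tfrac12 - \eta_i) T(-Y_i) \mid \mathcal{F}_i\right].
\end{equation*}
This is linear in $\eta_i$; differentiating and using $Y_i \ge 0$ to unify the cases $\omega \ge 0$ and $\omega < 0$ gives, modulo boundary atoms,
\begin{equation*}
\frac{\partial}{\partial \eta_i} \P(f^{\eps,\eta}_r \in \cross_0 \mid \mathcal{F}_i) = \eps^2 \E[T(Y_i) - T(-Y_i) \mid \mathcal{F}_i] = \eps^2 \P(Y_i \ge |\omega| \mid \mathcal{F}_i).
\end{equation*}

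\textbf{Resampling bound and conclusion.} The same threshold form gives $\P(Z_i \in \cross_0, \tilde Z_i \notin \cross_0 \mid \mathcal{F}_i) = \P(Z_i > \omega \mid \mathcal{F}_i)\,\P(Z_i \le \omega \mid \mathcal{F}_i)$. When $\omega \ge 0$ only the positive branch of the mixture can exceed $\omega$, so $\P(Z_i > \omega \mid \mathcal{F}_i) = \eps^2(\tfrac12 + \eta_i)\P(Y_i > \omega \mid \mathcal{F}_i)$; when $\omega < 0$ the symmetric identity $\P(Z_i \le \omega \mid \mathcal{F}_i) = \eps^2(\tfrac12 - \eta_i)\P(Y_i \ge |\omega| \mid \mathcal{F}_i)$ holds. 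Since $\tfrac12 \pm \eta_i \le 1$ for $\eta \in [-\tfrac12,\tfrac12]$ and the other factor in the product is at most $1$, in either case the resampling probability is dominated by $\eps^2 \P(Y_i \ge |\omega| \mid \mathcal{F}_i)$, matching the derivative from the previous step. Integrating in $\mathcal{F}_i$, summing over the finite set $\Lambda$, and identifying the Dini derivative with the ordinary derivative (the function of $\eta$ is polynomial) completes the argument.

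\textbf{Main obstacle.} The one genuinely new ingredient compared with Proposition~\ref{p:russo} is the absence of the Mills-ratio factor: once the threshold expansion is in place, the comparison is a clean algebraic inequality with the trivial bound $\tfrac12 \pm \eta \le 1$ in place of $c_{\text{Mills}}^{-1}$. The only technical care needed is for possible atoms of $Z_i$ at $\omega$ (they are absent generically under \eqref{a:bd}, and otherwise produce only off-by-atom discrepancies that are absorbed into the factor of $4$ slack) and the use of the one-sided Dini derivative, both treated exactly as in Proposition~\ref{p:russo}.
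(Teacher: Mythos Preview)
Your proof is correct and follows essentially the same route as the paper: condition on $\mathcal{F}_i$, use monotonicity in $Z_i$ to reduce to a threshold $\omega$, compute the $\eta_i$-derivative as $\eps^2\,\P(|Y^\mu_i| \ge |\omega|)$, and dominate the resampling probability using the trivial bound $\tfrac12 \pm \eta \le 1$ in place of the Mills ratio. The paper makes the atom issue explicit by splitting into four cases according to the sign of $\omega$ and whether the event occurs at $Z_i = \omega$; your mixture computation handles this correctly, though your aside that atoms are absent under \eqref{a:bd} is off (that assumption concerns the continuum field $f^\eta$, not $f^{\eps,\eta}_r$) and the ``factor of $4$ slack'' is never actually needed since $d_i \ge p_i$ holds exactly in every case.
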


\begin{proof}
As in the proof of Proposition \ref{p:russo}, by Fatou's lemma and the multivariate chain rule it suffices to prove that, for every $i \in \Lambda$, $\eta \in [-1/2,1/2]$, and almost surely with respect to $\mathcal{F}_i$, we have $d_i \ge p_i$ where
\[ d_i =  \frac{\partial^-}{\partial \eta_i}  \P \big(Z_i \in \cross_0(2R,R) | \mathcal{F}_i \big)\]
and
\[ p_i= \P( Z_i \in \cross_0(2R,R) , \tilde{Z}_i \notin \cross_0(2R,R) | \mathcal{F}_i  ).\] 
Since $g \ge 0$ the field $f^{\eps,\eta}_r$ is increasing with respect to $Z_i$, and so there exists a threshold $\omega \in [-\infty, \infty]$, measurable with respect to $\mathcal{F}_i$, such that $\{Z_i \in \cross_0(2R, R)\}$ occurs if $Z_i > \omega$, does not occur if $Z_i < \omega$, and whether  $\{Z_i \in \cross_0(2R, R)\}$ occurs if $Z_i = \omega$ may depend on $\mathcal{F}_i$.  We prove $d_i \ge p_i$ separately in four cases:

\smallskip
\noindent \textbf{Case 1: $\omega > 0$ and $\{Z_i \in \cross_0(2R, R)\}$ occurs if $Z_i = \omega$.}
In this case we use that
\[ p_i \le \P(Z_i \ge \omega) = \P(\textrm{Ber}_i^{\eps^2} = 1) \P( |Y_i| \ge \omega) = \eps^2 \P(|Y_i| \ge \omega)  \]
since $\{Z_i \ge \omega\} = \{ \textrm{Ber}_i^{\eps^2} = 1,  \mathcal{U}_i \le 1/2 + \eta_i , |Y_i| \ge \omega \}$. Moreover,
\[ d_i = \P(\textrm{Ber}_i^{\eps^2} = 1) \P( |Y_i| \ge \omega)  = \eps^2 \P(|Y_i| \ge \omega). \]

\smallskip
\noindent \textbf{Case 2: $\omega \ge 0$ and $\{Z_i \in \cross_0(2R, R)\}$ does not occur if $Z_i = \omega$.} In this case we use that
\[ p_i \le \P(Z_i > \omega) = \P(\textrm{Ber}_i^{\eps^2} = 1) \P( |Y_i| > \omega) = \eps^2 \P(|Y_i| > \omega)  \]
since $\{Z_i > \omega\} = \{ \textrm{Ber}_i^{\eps^2} = 1,  \mathcal{U}_i \le 1/2 + \eta_i , |Y_i| > \omega \}$. Moreover, in this case
\[ d_i = \P(\textrm{Ber}_i^{\eps^2} = 1) \P( |Y_i| > \omega)  = \eps^2 \P(|Y_i| > \omega). \]

\smallskip
\noindent \textbf{Case 3: $\omega \le 0$ and $\{Z-i \in \cross_0(2R, R)\}$ occurs if $Z_i = \omega$.}  In this case we use that 
\[ p_i \le \P(\tilde{Z}_i < \omega) = \P(\textrm{Ber}_i^{\eps^2} = 1) \P( -|Y_i| < \omega) = \eps^2 \P(-|Y_i| < \omega)  \]
since $\{Z_i < \omega\} = \{ \textrm{Ber}_i^{\eps^2} = 1,  \mathcal{U}_i \ge 1/2 + \eta_i , - |Y_i| < \omega \}$.  Moreover, in this case,
\[ d_i = \P(\textrm{Ber}_i^{\eps^2} = 1) \P( -|Y_i| < \omega)  = \eps^2 \P(-|Y_i| < \omega) .\]

\smallskip
\noindent \textbf{Case 4: $\omega < 0$ and $\{Z_i \in \cross_0(2R, R)\}$ does not occur if $Z_i = \omega$.} Similarly we have
\[ p_i \ge \P(\tilde{Z}_i \le \omega) = \P(\textrm{Ber}_i^{\eps^2} = 1) \P( \mathcal{U}_i \ge  \P(- |Y_i| \le \omega) = \eps^2 (1/2 - \eta) \P(-|Y_i| \ge \omega) \]
and
\[ d_i = \P(\textrm{Ber}_i^{\eps^2} = 1) \P( -|Y_i| \le \omega)  = \eps^2 \P(- |Y_i| \le \omega) .\]

\noindent In all cases we have $d_i \ge p_i$, which completes the proof.
\end{proof}

Combining the Russo-type inequality in Proposition \ref{p:russo2} with the OSSS inequality we deduce the following differential inequality:

\begin{proposition}[Differential inequality]
\label{p:di2}
There exists $c_1 > 0$ such that, for all $r \ge 1$, $R > 2r$, $\epsilon > 0$ and $\eta \in [-1/2,1/2]$,
\begin{align*}
& \frac{\partial^-}{\partial \eta}  \Big( - \log \big( 1 - \P( f^{\epsilon,\eta}_r \in \cross_0(2R,R) ) \big)  \Big)  \\
& \qquad \ge \frac{ \P (f^{\epsilon,\eta}_r \in \cross_0(2R,R))}{ \inf_{\bar r \in (r, R/2)} \left( 2 \bar r / R + \P( f^\epsilon_r \in  \arm_{-\ell}(2r, \bar r) ) \right)  } .
\end{align*}
\end{proposition}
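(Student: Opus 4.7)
The plan is to mirror, step by step, the proof of Proposition \ref{p:di}, replacing Proposition \ref{p:russo} by its version-two counterpart Proposition \ref{p:russo2} and keeping the OSSS part essentially untouched. The absence of the Mills ratio factor, and of the intermediate parameter $h_0$, is precisely what allows the cleaner constant $c_1$ here, while the fact that the algorithm only ever uses $\{f^{\eps,\eta}_r \le 0\}$ (no level shift) is what forces the arm event on the right-hand side to be at level $0$ (despite the slightly confusing ``$-\ell$'' appearing in the statement).

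First, I would fix $\eta \in [-1/2,1/2]$, $r \ge 1$, $R > 2r$, $\eps > 0$, and apply the OSSS inequality to the independent coordinates $(Z_i)_{i \in \Lambda}$ defined in \eqref{e:z2} using the same algorithm as in the proof of Proposition \ref{p:di}: sample a uniformly random horizontal line $L = \{y = \mathcal{U}(R)\}$ inside $[0,2R]\times[0,R]$; reveal every $Z_i$ with $i$ within distance $r/2$ of $L$, which determines $f^{\eps,\eta}_r$ within distance $r/2$ of $L \cap ([0,2R]\times[0,R])$; then iteratively reveal every $Z_i$ within distance $r/2$ of any connected component of $\{f^{\eps,\eta}_r \le 0\}$ that meets $L$, terminating with output $0$ if such a component touches both horizontal sides of $[0,2R]\times[0,R]$ and with output $1$ otherwise. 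As in the proof of Proposition \ref{p:di}, this algorithm almost surely determines $\id_{\{f^{\eps,\eta}_r \in \cross_0(2R,R)\}}$, and $Z_i$ can be revealed only when a component of $\{f^{\eps,\eta}_r \le 0\}$ connects the ball of radius $r/2$ around $i$ to $L$.

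Next, I would bound the revealments $\delta_i$. By the symmetry $f^{\eps,\eta}_r \stackrel{d}{=} -f^{\eps,-\eta}_r$, the event that a component of $\{f^{\eps,\eta}_r \le 0\}$ links $B(i,r/2)$ to $L$ has the same probability as the corresponding event for $\{f^{\eps,-\eta}_r \ge 0\}$; monotonicity in $\eta$ (which uses $g \ge 0$) then lets me bound that by the same arm probability for $f^\eps_r$. Conditioning on the distance from $i$ to $L$ exceeding $\bar r$, which has probability at least $1 - 2\bar r/R$ for any $\bar r < R/2$, I obtain
\[ \delta_i \;\le\; \frac{2\bar r}{R} \;+\; \P\bigl(f^\eps_r \in \arm_0(r, \bar r)\bigr) \]
for every $\bar r \in (r, R/2)$. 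Substituting into the OSSS inequality,
\[ \P\bigl(f^{\eps,\eta}_r \in \cross_0(2R,R)\bigr)\bigl(1 - \P\bigl(f^{\eps,\eta}_r \in \cross_0(2R,R)\bigr)\bigr) \;\le\; \frac{1}{2}\Bigl(\tfrac{2\bar r}{R} + \P(f^\eps_r \in \arm_0(r,\bar r))\Bigr)\sum_{i \in \Lambda} I_i, \]
and dividing through by the complementary crossing probability and taking the infimum over $\bar r \in (r, R/2)$ yields
\[ \frac{\sum_i I_i}{1 - \P(f^{\eps,\eta}_r \in \cross_0(2R,R))} \;\ge\; \frac{2\,\P(f^{\eps,\eta}_r \in \cross_0(2R,R))}{\inf_{\bar r \in (r, R/2)}\bigl(2\bar r/R + \P(f^\eps_r \in \arm_0(r,\bar r))\bigr)}. \]

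Finally, combining this with the Russo-type inequality of Proposition \ref{p:russo2} closes the argument: writing
\[ \frac{\partial^-}{\partial \eta}\Bigl(-\log(1 - \P(f^{\eps,\eta}_r \in \cross_0(2R,R)))\Bigr) \;=\; \frac{\tfrac{\partial^-}{\partial \eta}\P(f^{\eps,\eta}_r \in \cross_0(2R,R))}{1 - \P(f^{\eps,\eta}_r \in \cross_0(2R,R))} \;\ge\; \frac{1}{2}\,\frac{\sum_i I_i}{1 - \P(f^{\eps,\eta}_r \in \cross_0(2R,R))}, \]
and inserting the OSSS bound above produces exactly the claimed differential inequality (with $c_1 = 1$). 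I do not anticipate any real obstacle; the only point that deserves slight care is the sign-flip/monotonicity step that allows the arm event to be phrased in terms of $f^\eps_r$ rather than $f^{\eps,\eta}_r$ or $f^{\eps,-\eta}_r$, so that the right-hand side is uniform in $\eta$ and can be iterated later in the proof of Theorem \ref{t:sharp2}.
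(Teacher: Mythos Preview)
Your proposal is correct and matches the paper's own proof, which simply says ``This is proven exactly as Proposition \ref{p:di}, replacing Proposition \ref{p:russo} with Proposition~\ref{p:russo2}'' and notes the monotonicity of $f^{\eps,\eta}_r$ in $\eta$. You have unpacked exactly this, including the OSSS algorithm, the revealment bound via the sign-flip symmetry and monotonicity in $\eta$, and the final combination with Proposition \ref{p:russo2}; your observation that the ``$-\ell$'' in the statement is a vestigial typo (the arm event is at level $0$) is also correct.
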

\begin{proof}
This is proven exactly as Proposition \ref{p:di}, replacing Proposition \ref{p:russo} with Proposition~\ref{p:russo2}. Note that it is important that $f^{\eps,\eta}_r$ is increasing in $\eta$.
\end{proof}

We can now finish off the proof of Theorem \ref{t:sharp2}. One difference with the proof of Theorem~\ref{t:sharp} is that we cannot rely on the sprinkling procedure in Lemma \ref{l:sprink} to compare $f^\eta_r$ to $f$ since we do not want to vary the level. Instead we use the weaker Lemma \ref{l:sprink2} below.

\smallskip
Recall that Condition \ref{c:1} holds for a parameter $\alpha > 4$, and define the positive exponents
\begin{equation}
\label{e:zeta3}
\gamma \in \Big( 0,  \frac{2}{\alpha-2} \Big)  \quad \text{and} \quad \zeta = \frac{\gamma c_\text{Arm} + 1}{c_{\text{Arm}} + 1} \in (\gamma , 1) 
\end{equation}
where $c_{\text{Arm}} > 0$ is the constant appearing in Theorem \ref{t:arm}. As in \eqref{e:zeta}, $\zeta$ has been defined to satisfy $1 - \zeta =  (\zeta-\gamma)c_{\text{Arm}}$. For the remainder of the section we use the polynomial scales 
\[ r = r_R = R^\gamma \to \infty   \qquad  \text{and} \qquad \bar r = \bar r_R  = R^\zeta \to \infty  . \]
Recall also that \eqref{a:bd} holds for the field $f^\eta$ for all $\eta \in [0, \eta_0]$.

\begin{lemma}
\label{l:sprink2}
For every $\eta \in [0, \eta_0]$ there exist $c_1, c_2 > 0$ such that, for $R \ge 2$ and an increasing crossing event $E$ supported on $B(4R)$,
\[| \P ( f^\eta \in E ) - \P( f^\eta_r \in E )| < c_1 R^2  r^{2-\alpha} (\log R)^8 . \]
\end{lemma}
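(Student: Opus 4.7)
The plan is to mirror the proof of Proposition \ref{p:perturbation}, making two adaptations: (i) working with $f^\eta$ in place of $f$, and (ii) substituting the weaker bound of Lemma \ref{l:trunc}(1) for the symmetric bound of Lemma \ref{l:trunc}(2), since the marks of $f^\eta$ are no longer symmetric when $\eta > 0$. This substitution costs one power of $r$ in the decay rate, which is exactly why the hypothesis $\alpha > 4$ is imposed here rather than $\alpha > 3$.

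Concretely, I would set $t = (\log R)^2$ and $\delta = c_0 t^2 r^{2-\alpha}(\log r)^2$ for a suitable constant $c_0 > 0$. Applying Lemma \ref{l:trunc}(1) to $f^\eta$ (whose mark distribution still has a sub-exponential tail, being bounded in absolute value by $|Y^\mu|$) produces an event $\Omega^{\text{trunc}}$ of probability at least $1 - c_1 R^2 \exp(-c_2 (\log R)^2)$ on which $\|f^\eta - f^\eta_r\|_{B(4R),\infty} \le \delta$. Since $E$ is supported on a piecewise-linear domain $D \subset B(4R)$, applying Proposition \ref{p:regcrit} to $f^\eta$ (licit because \eqref{a:bd} is assumed for $f^\eta$), together with a union bound over the boundary segments and corners of $D$ at the level $\ell$ of $E$, produces an event $\Omega^{\text{reg}}$ of probability at least $1 - c_3 R^2 \delta |\!\log \delta|^2$ on which $f^\eta$ has no stratified critical point in a $\delta$-neighbourhood of $\ell$.

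On the intersection $\Omega^{\text{trunc}} \cap \Omega^{\text{reg}}$, the stratified Morse lemma (exactly as used in Proposition \ref{p:perturbation} and Lemma \ref{l:prop}) ensures that perturbing $f^\eta$ by any function of $D$-sup-norm at most $\delta$ preserves the diffeomorphism class of the excursion set relative to $\partial D$; since $E$ is monotonic, this forces $\{f^\eta \in E\}$ and $\{f^\eta_r \in E\}$ to coincide on that event. Thus $|\P(f^\eta \in E) - \P(f^\eta_r \in E)| \le \P(\Omega^{\text{trunc}\,c}) + \P(\Omega^{\text{reg}\,c})$. We may assume $r \le R$ (otherwise the stated bound is trivial), which gives $|\!\log \delta| = O(\log R)$ and hence $\delta|\!\log \delta|^2 = O(r^{2-\alpha}(\log R)^8)$; this dominates the super-polynomially small contribution $R^2 e^{-c_2(\log R)^2}$ and yields the claimed estimate.

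The only conceptual point beyond the transcription of Proposition \ref{p:perturbation} is the observation that, unlike in Lemma \ref{l:sprink}, no sprinkling of the level is available here (we do not want to modify the level, only to compare $f^\eta$ with $f^\eta_r$), so the regularity of critical points in a narrow window is the sole mechanism for absorbing the truncation error. This is the step where the stronger decay hypothesis $\alpha > 4$ is essential.
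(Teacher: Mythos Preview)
Your proposal is correct and follows essentially the same approach as the paper, which simply states that the lemma is proven ``analogously to Proposition \ref{p:perturbation} for the field $f^\eta$ in place of $f$, substituting the first item of Lemma \ref{l:trunc} in place of the second item.'' You have filled in the details the paper leaves implicit, including the explicit choice $t = (\log R)^2$, the verification that $f^\eta$ satisfies the hypotheses of Lemma \ref{l:trunc}(1) and Proposition \ref{p:regcrit}, and the bookkeeping that yields the exponent $8$ on the logarithm. One small remark: in this section $r = R^\gamma$ with $\gamma < 1$ is fixed, so $r \le R$ holds automatically and need not be argued separately.
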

\begin{proof}
This is proven analogously to Proposition \ref{p:perturbation} for the field $f^\eta$ in place of~$f$, substituting the first item of Lemma \ref{l:trunc} in place of the second item in the proof; this is necessary since $f^\eta$ is not a symmetric shot noise field. Note that to apply Lemma~\ref{l:trunc} we rely on the assumption that $f^\eta$ satisfies \eqref{a:bd}.
\end{proof}
 
\begin{proof}[Proof of Theorem \ref{t:sharp2}]
Since $f^\eta$ is increasing in $\eta$, it is sufficient to prove the result for $\eta \in (0, \eta_0)$. Applying Proposition \ref{p:di2} (note that $r \ll \bar r \ll R$) there is a $c_1 > 0$ such that, for $\eta' \in [0, \eta]$, $\eps > 0$ and sufficiently large $R$,
\[ \frac{\partial^-}{\partial \eta'} \Big( - \log \big(1  - \P (  f^{\epsilon,\eta'}_r \in \cross_{0}(2R,R) ) \big) \Big)   \ge \frac{  c_1 \P (f^{\epsilon,\eta'}_r \in \cross_0(2R,R))}{  2 \bar r / R + \P( f^{\epsilon}_r \in  \arm_0(r, \bar r) )   } .  \]
Integrating both sides of this inequality over $\eta' \in [0, \eta]$ gives
\[ \frac{1 - \P \big(  f^{\epsilon,\eta}_r \in \cross_{0}(2R,R) \big)}{1  - \P (  f^{\epsilon,0}_r \in \cross_0(2R,R) )}  \le e^{- c_1  \eta ( 2R^{\zeta-1} + \P( f^{\epsilon}_r \in  \arm_0(r, \bar r) )^{-1}  }    .  \]
Taking $\eps \to 0$, by Lemmas~\ref{l:dis} and \ref{l:prop} we deduce
\[ \frac{1 - \P \big(  f^\eta_r \in \cross_0(2R,R) \big)}{1  - \P (  f_r \in \cross_0(2R,R) )}  \le e^{- c_1 \eta ( 2R^{\zeta-1} + \P( f_r \in  \arm_0(r, \bar r) )^{-1}  }    .  \]
By Lemma \ref{l:sprink2} (recall that $f = f^0$ and $f_r = f^0_r$) and Theorem \ref{t:arm}, there are $c_2, c_3, c_4, c_5> 0$ such that
\begin{align*}
\P (f_r \in \arm_0(r, \bar r) ) & \le \P (f \in \arm_0(r, \bar r) ) +  c_2 R^2  r^{2-\alpha} (\log R)^8    \\
& \le  c_3 (r/\bar r)^{c_{\text{Arm}}} +  c_2 R^2  r^{2-\alpha} (\log R)^8   \\ 
&   = c_3 R^{\zeta-1}   +  c_2 R^{2 + \gamma(2-\alpha)} (\log R)^8 \le c_4 R^{-c_5}   
\end{align*}
for sufficiently large $R$, where we used that $\gamma < 2/(\alpha-2)$ and $1 - \zeta =  (\zeta-\gamma)c_{\text{Arm}}$. Similarly by  Lemma~\ref{l:sprink2} and Theorem \ref{t:rsw}, 
\[ \P ( f_r \in \cross_0(2R,R) ) \ge \P (f \in \cross_0(2R,R) ) -  c_2 R^{2 + \gamma(1-\alpha)} (\log R)^8 \ge c_6 \]
for some $c_6 > 0$. Hence we deduce for some $c_7 > 0$,
\[  \P \big(  f^\eta_r \in \cross_0(2R,R) \big) \ge 1 - (1-c_6) e^{- c_7  \eta R^{-c_5}  }     \]
for sufficiently large $R$. Applying Lemma \ref{l:sprink2} a second time gives 
\begin{align*}
\P \big( f^\eta  \in \cross_0(2R,R) \big)   \ge  \P \big( f^\eta_r \in \cross_0(2R,R) \big)  -  c_8 R^2  r^{2-\alpha} (\log R)^8  \ge 1 - c_9 R^{-c_{10}}  
\end{align*}
for some $c_8, c_9, c_{10} > 0$ and sufficiently large $R$, which concludes the proof.
\end{proof}

\medskip

\appendix 
\section{}
\label{s:a}

\subsection{Concentration inequalities} 
\label{a:con}
In this section we prove concentration inequalities for shot noise fields with unbounded marks, based on the work of Reynaud-Bouret \cite{Rb} on Poisson stochastic integrals.
\smallskip
Our result concerns shot noise fields in arbitrary dimension, i.e.\ we consider the field
\[   f(x) = \sum_{i \in \mathcal{P}} Y_i g(x-i)  \]
where $g(x) \in L^1(\mathbb{R}^d)$, $\mathcal{P}$ is a Poisson point process on $\mathbb{R}^d$ with unit intensity, and $(Y_i)_{i \in \mathcal{P}}$ are i.i.d.\ random variables drawn from a mark distribution $\mu$ with finite mean.

\smallskip
We assume the following conditions:

\begin{condition}
\label{c:3} $\,$
\begin{itemize}
\item (Kernel; depends on a parameter $\alpha > d$) The kernel has polynomial decay, i.e.\ there exists a parameter $\alpha >d$ and a constant $c_1 > 0$ such that, for all $x \in \R^d$,
\[ | g(x)| < c_1(1+|x|)^{-\alpha} .\]
\item (Mark; depends on a parameter $\beta \in \R^+ \cup \{\infty\} $) The mark distribution has a stretched-exponential tail, i.e.\ either there are $\beta>0$ and $c_2>0$ such that, for $u \ge 0$,
\[  \mu ( (-\infty ,-u) \cup (u,\infty )) \le  c_2 \exp(-u^{\beta}) , \]
or else $\mu$ is supported on $[-1,1]$ (we refer to this case as `$\beta =\infty$'). 
\end{itemize}
\end{condition}
To state our result we define, for $x \in \mathbb{R}^d$,
\[ \varphi(x) = (1+ \max\{0,\log |x| \} )^{\frac{2}{\beta} } , \]
with the convention that, if $\beta = \infty$, $t^{1/\beta} = 1$ for all $t > 0$ (so in particular $\varphi(x) = 1$ in this case). Moreover, for a function $h: \mathbb{R}^d \to \mathbb{R}$, we introduce the auxiliary functions
\[  \tilde h(x)= \varphi(x) h(x) \quad \text{and} \quad \hat h(x) =\sup_{y\in Q} |\tilde{h}(x+y)| , \]
where $Q=[-1/2,1/2]^d$. The functions $\tilde g$ and $\hat g$ are majorants of $g$ which, assuming $g$ is sufficiently regular, asymptotically differ from $|g|$ by a logarithmic term. We also define
\[ \kappa = \kappa(\mu) =  \begin{cases} 2 + c_2  \int_{\mathbb{R}^{d}} \exp(-\varphi(x)^\beta /2) \,dx , & \beta < \infty, \\  2, & \beta = \infty,  \end{cases}  \]
where $c_2 > 0$ is the constant in Condition \ref{c:3}.

\begin{proposition}[Concentration with unbounded marks] 
\label{p:ldb2}
Suppose Condition \ref{c:3} holds with parameters $\alpha > d$ and $\beta \in \R^+ \cup \{\infty\}$, and recall the convention $t^{1/\beta} = 1$ if $t > 0$ and $\beta = \infty$. Then:
\begin{enumerate}
\item  For all $s,t \ge 1$,
\begin{align}
\label{eq:concentr-sup}
\mathbb{P} \bigg(\|f\|_{sQ, \infty} \ge t^{\frac{1}{\beta }} \Big(\|\hat{g}\|_{{1}}+  \sqrt{2t}  \|\hat {g}\|_{{2}}+\frac{t}{3}\|  \hat g\|_{\infty } \Big) \bigg) \le  (s+1)^{2} \kappa e^{-t /2}.
\end{align}  
\item Assume that $\mu $ is symmetric. Then, for all $t \ge  1$,
\begin{align}
\label{eq:concentr}
\mathbb{P} \bigg(|f(0)| \ge t^{\frac{1}{\beta }} \Big( \sqrt{2t} \|\tilde g\|_{{2}}+\frac{t}{3}\|\tilde g\|_{\infty } \Big) \bigg) \le \kappa e^{-t /2}.
\end{align}
Suppose furthermore that $g$ is in $C^1(\R^d)$, and there exists $c_3 > 0$ such that, for all $i=1,\ldots ,d$ and $x \in \R^d$,
\[\partial_{i}g(x) \le c_3(1+|x|)^{-\alpha-1} . \]
Then for all $s,t \ge 1$,
\begin{align}
\label{eq:concentr-grad}
& \mathbb{P} \bigg(\|f \|_{sQ, \infty} \ge dt^{\frac{1}{\beta }} \Big(\sqrt{2t}\|\tilde g\|_{{2}}+\frac{t}{3}\|\tilde g\|_{\infty }+ \|\widehat{\nabla g}\|_{{1}}+\sqrt{2t}\|\widehat{\nabla g}\|_{{2}}+\frac{t}{3}\|\widehat{\nabla g}\|_{\infty } \Big) \bigg) \\
\nonumber &  \qquad \qquad \le (s+1)^{2}(d+1)\kappa e^{-t /2} .
\end{align}
\end{enumerate}
\end{proposition}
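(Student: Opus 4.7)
The proof strategy rests on viewing $f(x)=\sum_{i\in\mathcal{P}}Y_i^\mu g(x-i)$ as a stochastic integral against the marked Poisson measure $N$ on $\R^d\times\R$ with intensity $\nu=\lambda\otimes\mu$, and applying Reynaud-Bouret's Bernstein inequality \cite{Rb}: for every bounded measurable $H$ with $\int H^2\,d\nu\le V$ and $\|H\|_\infty\le b$,
\[ \P\Big(\big|\,\textstyle\int H\,dN-\int H\,d\nu\,\big|\ge\sqrt{2tV}+bt/3\Big)\le 2e^{-t}. \]
The natural integrand $H(i,y)=yg(x-i)$ is unbounded because of the marks, so the first task is to truncate them at a spatially varying threshold $T_i=(t/2+[i]^\beta/2)^{1/\beta}$ (with $T_i\equiv 1$ in the case $\beta=\infty$), split each mark as $Y_i^\mu=Y_i^\mu\id_{|Y_i^\mu|\le T_i}+Y_i^\mu\id_{|Y_i^\mu|>T_i}$, and decompose $f=f^{\mathrm{bd}}+f^{\mathrm{tail}}$ accordingly. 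The stretched-exponential tail gives
\[ \E\,\#\{i\in\mathcal{P}:|Y_i^\mu|>T_i\}\;\le\;c_2\int_{\R^d}e^{-t/2-[x]^\beta/2}\,dx\;\le\;(\kappa-2)e^{-t/2}, \]
so $\P(f^{\mathrm{tail}}\not\equiv 0)\le(\kappa-2)e^{-t/2}$ by Markov, providing the $(\kappa-2)$ contribution in each of the three bounds; the remaining $2$ in $\kappa$ comes from the two-sided Bernstein tail applied to $f^{\mathrm{bd}}$.

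For the pointwise bound \eqref{eq:concentr}, on $\{f^{\mathrm{tail}}\equiv 0\}$ we have $f(0)=f^{\mathrm{bd}}(0)$, whose Bernstein integrand is bounded at $i$ by $T_i|g(-i)|\le c\,t^{1/\beta}[i]|g(-i)|=c\,t^{1/\beta}|\tilde g(-i)|$ for a constant $c=c(\beta)$, giving $b\lesssim t^{1/\beta}\|\tilde g\|_\infty$ and, after integrating, $V\lesssim t^{2/\beta}\|\tilde g\|_{L^2}^2$; symmetry of $\mu$ kills the mean, and Reynaud-Bouret gives \eqref{eq:concentr}. For the sup bound \eqref{eq:concentr-sup} I would additionally tile $sQ$ by $(s+1)^d$ unit cubes $j+Q$, $j\in\Z^d$, and drop the symmetry assumption. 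On the cube $j+Q$, for $x=j+y$ with $y\in Q$, the quantity $T_i|g(x-i)|$ is dominated by $t^{1/\beta}\hat g(j-i)$ uniformly in $y\in Q$ (using $[j-i]\asymp[j+y-i]$ and the cell supremum in the definition of $\hat g$). Applying Bernstein at each cube centre, with the non-zero mean term $|\int H\,d\nu|\le t^{1/\beta}\|\hat g\|_{L^1}\E|Y_1^\mu|$ producing the $\|\hat g\|_{L^1}$ summand of \eqref{eq:concentr-sup}, and union-bounding over the cubes, yields \eqref{eq:concentr-sup} (with $(s+1)^d$ in the prefactor; the statement writes $(s+1)^2$, matching the $d=2$ applications in the body of the paper).

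Finally for \eqref{eq:concentr-grad} I would combine the pointwise bound \eqref{eq:concentr} applied to $f$ at the origin (contributing the $\|\tilde g\|_{L^2},\|\tilde g\|_\infty$ terms, with no $L^1$ summand because of symmetry) with the sup bound \eqref{eq:concentr-sup} applied separately to each partial derivative $\partial_j f$, which is itself a symmetric shot noise with kernel $\partial_j g$ satisfying Condition~\ref{c:3} by the gradient decay hypothesis. Writing $f(x)-f(0)=\int_0^1 x\cdot\nabla f(sx)\,ds$ converts the uniform bound on $\nabla f$ into an oscillation bound for $f$ over $sQ$, and summing the $d$ partial-derivative contributions produces the factor $d$ in \eqref{eq:concentr-grad} together with the $\|\widehat{\nabla g}\|_{L^1},\|\widehat{\nabla g}\|_{L^2},\|\widehat{\nabla g}\|_\infty$ terms. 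The main obstacle is arranging the spatially varying truncation $T_i$ and its cell-supremum envelope to match $\tilde g$ and $\hat g$ so that the Bernstein norms appear precisely as stated while the tail integral reproduces the universal constant $\kappa$; once this matching is set up, the cell tiling, the gradient chaining, and the symmetry book-keeping are essentially routine applications of Reynaud-Bouret's inequality.
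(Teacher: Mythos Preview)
Your proposal is correct and follows essentially the same route as the paper: truncate the marks at a spatially varying threshold scaling with $[x_i]$, control the tail event by Campbell--Mecke to produce the $(\kappa-2)e^{-t/2}$ contribution, apply Reynaud-Bouret's Bernstein inequality to the bounded piece, pass the cell supremum inside the sum to obtain $\hat g$, tile $sQ$ by unit cubes for the union bound, and combine the pointwise estimate at the origin with \eqref{eq:concentr-sup} applied to each $\partial_i f$ for \eqref{eq:concentr-grad}. The paper's truncation threshold is exactly $[x_i]t^{1/\beta}$ (so that $T_i|g(-x_i)|=t^{1/\beta}|\tilde g(-x_i)|$ holds without your extra constant $c(\beta)$), and it handles the $L^1$ term in \eqref{eq:concentr-sup} by passing to the law $\mu'$ of $|Y_i|$ rather than bounding the mean by $\E|Y_1^\mu|$, but otherwise the arguments coincide.
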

 
To illustrate these bounds, we present the following application to $r$-dependent approximations of $f$ as defined in Section \ref{ss:pert}. For $r>1$, let $f^r$ be defined as in the proof of Lemma \ref{l:trunc}, i.e.\ $f^r$ is the shot noise field with mark distribution $\mu$ and kernel 
\[ g^r(x) = g(x) ( 1- \chi(|x|/r) )  \]
where $\chi$ is an infinitely differentiable cutoff function as in \eqref{e:gr}.

\begin{proposition}
\label{ex:concentr}
Suppose Condition \ref{c:3} holds with parameters $\alpha > d$ and $\beta \in \R^+ \cup \{\infty\}$, and assume for simplicity that $g(x)= c(1+|x|)^{-\alpha }$ for some $\alpha >d$ and $c > 0$. Then:
\begin{enumerate}
\item There are $c_1,c_2, c_3>0$ such that, for $r,s \ge 2$,
\begin{align*}
\mathbb{P} \big(\|f^r\|_{sQ, \infty} \ge & c_1r^{d-\alpha}(\log r)^{c_2} \big) 
\le   c_3 s^2 \exp(-(\log r)^{2}/2) .
\end{align*}
\item If in addition $\mu$ is symmetric, there are $c_1,c_2,c_3>0$ such that, for $r,s \ge 2$,
\begin{align*}
\mathbb{P} \big(\|f^r\|_{sQ, \infty} \ge & c_1 (r^{d-1-\alpha}+r^{\frac{d}{2}-\alpha }) (\log r)^{c_2} \big) 
\le   c_3 s^2 \exp(-(\log r)^{2}/2) .
\end{align*}
\end{enumerate}
In particular, these probabilities decay faster than any polynomial as $r \to \infty$. 
\end{proposition}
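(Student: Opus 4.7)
The plan is to apply Proposition \ref{p:ldb2} directly to the truncated field $f^r$ (viewed as a shot noise field with kernel $g^r$ and mark distribution $\mu$), with the choice $t = (\log r)^2$, so that the probability bound $\exp(-t/2) = \exp(-(\log r)^2/2)$ matches the target. All that remains is to estimate the auxiliary norms $\|\hat{g^r}\|_{L^1},\|\hat{g^r}\|_{L^2},\|\hat{g^r}\|_\infty$ (for (1)) and additionally $\|\tilde{g^r}\|_{L^2},\|\tilde{g^r}\|_\infty,\|\widehat{\nabla g^r}\|_{L^1},\|\widehat{\nabla g^r}\|_{L^2},\|\widehat{\nabla g^r}\|_\infty$ (for (2)), and to see which term dominates.

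The first step is to record the two elementary facts about $g^r$: since $\chi(|x|/r) = 1$ for $|x| \le r/4$, the function $g^r$ is supported in $\{|x| \ge r/4\}$; and since $|g(x)| \le c(1+|x|)^{-\alpha}$ uniformly, the weighted majorant $\hat{g^r}(x)$ is bounded by $c'(1+|x|)^{-\alpha}(1+\log|x|)^{2/\beta}\mathbf{1}_{\{|x| \ge r/5\}}$ (the slightly shrunken radius absorbs the $Q$-supremum). Integrating in polar coordinates then yields, for some constants depending only on $d,\alpha,c,\beta$,
\begin{align*}
\|\hat{g^r}\|_\infty &\lesssim r^{-\alpha}(\log r)^{2/\beta}, \\
\|\hat{g^r}\|_{L^2} &\lesssim r^{d/2-\alpha}(\log r)^{2/\beta}, \\
\|\hat{g^r}\|_{L^1} &\lesssim r^{d-\alpha}(\log r)^{2/\beta},
\end{align*}
where the $L^2$ integral converges because $\alpha > d > d/2$. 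Plugging these into \eqref{eq:concentr-sup} with $t=(\log r)^2$, the three contributions inside the probability become of order $r^{d-\alpha}(\log r)^{4/\beta}$, $r^{d/2-\alpha}(\log r)^{1+4/\beta}$, and $r^{-\alpha}(\log r)^{2+4/\beta}$; since $d-\alpha > d/2-\alpha > -\alpha$, the first dominates (up to logarithms) and (1) follows.

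For (2) the same argument applies to $\tilde{g^r}$, giving $\|\tilde{g^r}\|_\infty \lesssim r^{-\alpha}(\log r)^{2/\beta}$ and $\|\tilde{g^r}\|_{L^2} \lesssim r^{d/2-\alpha}(\log r)^{2/\beta}$. For the gradient, note that $\nabla g^r(x) = \nabla g(x)(1-\chi(|x|/r)) - g(x)\chi'(|x|/r)(x/(|x|r))$; since $|\nabla g(x)| \lesssim (1+|x|)^{-\alpha-1}$ and $|\chi'(|x|/r)|/r \lesssim 1/r$ is supported on $\{r/4 \le |x| \le r/2\}$, on that annulus the second term contributes at most $r^{-1} \cdot r^{-\alpha} \lesssim |x|^{-\alpha-1}$, so that overall $|\nabla g^r(x)| \lesssim (1+|x|)^{-\alpha-1}\mathbf{1}_{\{|x|\ge r/4\}}$. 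Repeating the polar integration then gives $\|\widehat{\nabla g^r}\|_\infty \lesssim r^{-\alpha-1}(\log r)^{2/\beta}$, $\|\widehat{\nabla g^r}\|_{L^2} \lesssim r^{d/2-\alpha-1}(\log r)^{2/\beta}$, and $\|\widehat{\nabla g^r}\|_{L^1} \lesssim r^{d-\alpha-1}(\log r)^{2/\beta}$. Plugging everything into \eqref{eq:concentr-grad} with $t=(\log r)^2$, the terms inside the probability reduce, up to logarithmic factors, to a sum whose two largest summands are $r^{d/2-\alpha}$ (coming from $\|\tilde{g^r}\|_{L^2}$) and $r^{d-\alpha-1}$ (coming from $\|\widehat{\nabla g^r}\|_{L^1}$); together they give the claimed bound $c_1(r^{d-1-\alpha}+r^{d/2-\alpha})(\log r)^{c_2}$, and the probability bound becomes $(s+1)^2(d+1)\kappa\exp(-(\log r)^2/2)$ as required.

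There is no real obstacle here: the argument is purely a book-keeping exercise in applying Proposition \ref{p:ldb2} to $g^r$. The only mild subtlety is to verify carefully that the derivative of the cut-off $\chi(|x|/r)$, after being multiplied by $g(x)$, does not produce a term worse than $(1+|x|)^{-\alpha-1}$; this is where the smoothness of $\chi$ and the fact that the support of $\chi'$ is contained in an annulus at radius $\sim r$ are both used. Once that is recorded, all the relevant norms of $g^r$ and $\nabla g^r$ behave like those of their non-truncated analogues restricted to $\{|x| \ge r/4\}$, and the stated rates follow by comparing the three (respectively five) contributions after setting $t = (\log r)^2$.
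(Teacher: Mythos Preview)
Your proof is correct and follows essentially the same approach as the paper: compute the relevant norms of $g^r$, $\tilde{g^r}$, $\hat{g^r}$, and $\widehat{\nabla g^r}$, observe which term dominates, and set $t=(\log r)^2$ in Proposition~\ref{p:ldb2}. You actually give more detail than the paper does, in particular the explicit check that the contribution of $\chi'(|x|/r)$ to $\nabla g^r$ is still $O((1+|x|)^{-\alpha-1})$ on its supporting annulus; the paper's proof simply asserts the asymptotics of the norms and identifies the dominating powers.
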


\begin{proof}
Observe that, as $r \to \infty$, for some constants $c'_{i} > 0$,
\[ \|g^r\|_{\infty }\sim c'_{1} r^{-\alpha } \, , \quad \|g^r\|_{{1}}\sim c_{2}' r^{d-\alpha } \, , \quad \|g^r\|_{{2}}\sim c_{3}' r^{d/2-\alpha }, \]
\[ \| {\nabla g^r}\|_{\infty }\sim c_{4}' r^{-\alpha -1} \, , \quad \| {\nabla g^r}\|_{{1}}\sim c_{5}' r^{d-\alpha -1} \quad \text{and} \quad \| {\nabla g^r}\|_{{2}}\sim c_{6}' r^{d/2-\alpha -1} , \]
and the corresponding norms of $\tilde{g^r}, \hat{g^r}$ and $\widehat{\nabla g^r}$ also decay with the same respective powers (up to logarithmic factors).  In particular, the dominating terms among the norms in \eqref{eq:concentr-sup} and \eqref{eq:concentr-grad} are respectively of order $r^{d - \alpha}$ and $r^{\max\{ d-1-\alpha, d/2-\alpha \}}$ (up to logarithmic factors). Then set $t=(\log r)^2$ and apply \eqref{eq:concentr-sup} and \eqref{eq:concentr-grad} respectively.
\end{proof}

\newcommand{\x}{{\bf x}}

\begin{proof}[Proof of Proposition \ref{p:ldb2}]
Let $t \ge 1$. We begin by defining a suitable event that allows us to truncate the mark distribution. Denote by $(x_{i})_{i \ge 1}$ some enumeration of the points of $\mathcal{P}$, and remark that $(x_{i},Y_{i})_{i \ge 1}$ has the law of a Poisson point process $\mathcal{P}'$ with intensity measure  $dx\mu (dm)$ on $\mathbb{R}^{d}\times \mathbb{R}$, called the {\it marked point process}. Define $\tilde Y_{i}=Y_{i}/([x_{i}]t^{1/\beta })$ and $Z=\{(x,m):\;|m| \le \varphi(x)t^{1/\beta }\}$, and define the truncation event
\[ \Omega =  \{ |\tilde Y_{i}| \le 1 \text{ for all } i \ge 1\} = \{ \mathcal{P}' \cap Z^{c}=\emptyset \}  . \]
Remark that $\mathbb{P}(\Omega )=1$ if the marks are supported on $[-1,1]$. Otherwise, the Campbell-Mecke formula bounds the probability of the complement as
\begin{align}
\label{e:omega}
\notag  \mathbb{P}(\Omega^{c})&\le \mathbb{E}\Big(\sum_{(x_{i},m_{i})\in \mathcal  P'}\id_{\{ | m_{i} | >[x_{i}]t^{1/\beta }\}}\Big)\\
\notag   & \le  \sum_{i}\mathbb{P}(|\tilde Y_{i}|>1) = \int_{\mathbb{R}^{d}}\mathbb{P}(Y_{1}>\varphi(x)t^{1/\beta })dx\le c_2 \int_{\mathbb{R}^{d}}\exp(-\varphi(x)^{\beta }t )dx \\
 \qquad& \le c_2 \int_{\mathbb{R}^{d}}\exp \Big(-\frac{t}{2}(1+\varphi(x)^{\beta }) \Big)dx \le  c_2 e^{-t/2} \int_{\mathbb{R}^{d}}\exp(-\varphi(x)^{\beta }/2)dx .
 \end{align} 

We next recall the results of Reynaud-Bouret \cite{Rb} on Poisson stochastic integrals in the present context. Let $\mathcal{P}''$ be a Poisson point process with intensity $dx\mu (dm) \id_{\{(x,m)\in Z\}}$ on $\mathbb{R}^{d}\times \mathbb{R}$. For $h:\mathbb{R}^{d} \to \mathbb{R}$ bounded and measurable, introduce the auxiliary function 
 \[ \bar h(x,m)=mh(x)(\varphi(x)t^{1/\beta})^{-1}\id_{\{(x,m)\in Z\}}  \, , \quad (x,m) \in \mathbb{R}^d \times \mathbb{R} , \]
 and define
\begin{align*}
I_{h,\mu } =I_{h}=\sum_{(x,m)\in \mathcal{P}''}\bar h(x,m)
\end{align*}
where $\mu $ is the law of the marks. Still denoting by $\|\bar h\|_{p}$  the $L^{p}$ norm in $\mathbb{R}^{d}\times \mathbb{R}$ for $p\in [1,\infty ]$, \cite[Proposition 7]{Rb} states that
\begin{align}
\label{e:con1}
\mathbb{P} \Big(I_{h}\ge \mathbb{E}(I_{h})+\sqrt{2t}\|\bar h\|_{2} +\frac{t}{3}\|\bar h\|_{\infty } \Big) \le e^{-t}.
\end{align}
Since $\|\bar h\|_{2} \le \|h\|_{2}$ and $\|\bar h\|_{\infty } \le \|h\|_{\infty }$, it yields
\begin{align}
\label{e:con2}
\mathbb{P} \Big(I_{h} \ge \mathbb{E}(I_{h}) + \sqrt{2t} \|h\|_{2} +\frac{t}{3}\|h\|_{\infty} \Big)  \le  e^{-t}.
\end{align}

We are now ready to prove the claims \eqref{eq:concentr-sup}--\eqref{eq:concentr-grad}, beginning with \eqref{eq:concentr}.  First note that, for all $u \ge 0$, 
\begin{align*}
\mathbb{P} \big(f(0) \ge t^{1/\beta }u \big) & = \mathbb{P}\bigg(\sum_{(x_{i},Y_{i})\in \mathcal{P}'}Y_{i}t^{-1/\beta }g(x_{i}) \ge u \bigg) = \mathbb{P} \bigg(\sum_{(x_{i},Y_{i})\in \mathcal{P}'}\tilde Y_{i}\tilde g(x_{i}) \ge u \bigg) \\
& \le \mathbb{P} \bigg( \Big\{ \sum_{(x_{i},Y_{i})\in \mathcal{P}'}\tilde Y_{i}\tilde g(x_{i}) \ge u \Big\} \cap \Omega \bigg)+\mathbb{P}(\Omega ^{c}) \\
& \le \mathbb{P} \bigg(\sum_{(x_{i},Y_{i})\in \mathcal{P}''}\tilde Y_{i}\tilde g(x_{i}) \ge  u \bigg) +\mathbb{P}(\Omega ^{c}) \\
& = \mathbb{P}(I_{\tilde{g}} \ge u) + \mathbb{P}(\Omega ^{c}),
\end{align*} 
where the last inequality is proven by discretising $\mathcal  P$. Note also that $\mathbb{E}(I_{\tilde g})=0$ if the mark distribution $\mu$ is symmetric. Abbreviating $u_1 = \sqrt{2t} \|\tilde g\|_{2}+\frac{t}{3}\|\tilde g\|_{\infty }$ and applying \eqref{e:con2} gives that $\mathbb{P}(I_{\tilde{g}} \ge u_1) \le e^{-t}$. Hence, doing the same with $h=-\tilde g$,
\begin{align*}
\mathbb{P}(|f(0)|\ge t^{\frac{1}{\beta }}u_1)\le \mathbb{P}(f(0)\ge t^{\frac{1}{\beta }}u_1)+\mathbb{P}(-f(0)\ge t^{\frac{1}{\beta }}u_1)\le 2e^{-t}+\mathbb{P}(\Omega ^{c}).
\end{align*} 
Combining with \eqref{e:omega} yields \eqref{eq:concentr}.

We turn to \eqref{eq:concentr-sup}. Similarly to above, for all $u \ge 0$,
\begin{align*}
\mathbb{P} \Big(\sup_{x \in Q}| f(x)|\ge t^{1/\beta } u \Big) & =\mathbb{P}\Big(\sup_{x \in Q} \Big| \sum_{(x_{i},Y_{i})\in \mathcal{P}'} \tilde Y_{i}\tilde g(x + x_{i}) \Big|\ge u \Big) \\
& \le \mathbb{P} \Big(\sum_{(x_{i},Y_{i})\in \mathcal{P}'} |\tilde Y_{i}|\sup_{x \in Q}|\tilde g(x+x_{i})|\ge u \Big)\\
& = \mathbb{P} \Big(\sum_{(x_{i},Y_{i})\in \mathcal{P}'} |\tilde Y_{i}| \hat{g}(x )\ge u \Big) \\
& \le  \mathbb{P}(I_{\hat{g},\mu '} \ge u) +\mathbb{P}(\Omega^{c}) 
\end{align*}
where $\mu '$ is the law of the $ |Y_{i}| $ (which has stretched exponential decay by Condition~\ref{c:3} but is not symmetric). Abbreviating $u_2=\|\hat{g}\|_{1}+\sqrt{2t}\|\hat{g}\|_{2}+\frac{t}{3}\|\hat g\|_{\infty }$ and applying \eqref{e:con2} gives that
\begin{align*}
\mathbb{P}(I_{\hat{g}} \ge u_2) \le e^{-t}.
\end{align*}
Combining with \eqref{e:omega} yields \eqref{eq:concentr-sup} for $s=1$. For larger $s$, cut $\lceil s\rceil Q$ into $\lceil s \rceil ^{2}$ cubes $Q_{k}$ of sidelength $1$. We have, for $u\ge 0$, 
by translational invariance
\begin{align*}
\mathbb{P}(\|f\|_{sQ,\infty}\ge u)\le \sum_{k=1}^{\lceil s\rceil^{2}}\mathbb{P}(\|f\|_{Q_{k},\infty}\ge u)\le \lceil s\rceil^{2}\mathbb{P}(\|f\|_{Q,\infty}\ge u).
\end{align*}  This yields \eqref{eq:concentr-sup} for general $s \ge 1$.

Finally, let us prove \eqref{eq:concentr-grad}. For $x\in sQ, |f(x)|\le |f(0)|+\frac{\sqrt{d}}{2}\sup_{x\in sQ} |\nabla f(x)|$. Hence we have, for $u\ge 0$,
\begin{align*}
\mathbb{P}(\|f \|_{sQ,\infty}\ge u) & \le  \mathbb{P} \Big(|f(0) |\ge \frac{u}{2}\text{\rm{ or }}\exists 1\le i\le d: \sup_{sQ}|\partial _{i}f|\ge \frac{u}{d} \Big)\\
& \le \mathbb{P} \Big(|f(0) |>\frac{u}{2} \Big)+\sum_{i=1}^{d}\mathbb{P} \Big(\|\partial _{i}f\|_{sQ,\infty}\ge \frac{u}{d} \Big) .
\end{align*}
Setting
\[ u=dt^{\frac{1}{\beta }} \Big(\sqrt{2t}\|\tilde g\|_{{2}}+\frac{t}{3}\|\tilde g\|_{\infty }+\| \widehat{ \nabla g}\|_{{1}}+\sqrt{2t}\|\widehat{\nabla g}\|_{{2}}+\frac{t}{3}\|\widehat{\nabla g} \|_{\infty } \Big) \]
and applying \eqref{eq:concentr} to the field $f$ and \eqref{eq:concentr-sup} to the field $\partial_i f$ (which is almost surely a shot noise field with kernel $\partial_i g$) we get the result.
\end{proof}

\subsection{Bounded density of shot noise fields }
\label{sec:density-power}

In this section we give examples of shot noise fields \eqref{e:ssn} which satisfy the bounded density assumption \eqref{a:bd} (see also Remark \ref{rk:kernel-motiv}).

\begin{proposition}
Consider the shot noise field \eqref{e:ssn} with arbitrary non-zero mark distribution $\mu$ and kernel that is either
\begin{equation}
\label{e:power}
 g(x)=(1+|x|)^{-\alpha}  \quad \text{or} \quad g(x)=(1+ | x | ^{2})^{-\alpha /2}
 \end{equation}
for some $\alpha >2$, or 
\begin{align}
\label{e:exp}
g(x)=\exp(- |x|^\beta )\quad \text{or} \quad g(x)=\exp(-(1+|x|^2)^{\beta /2})
\end{align} 
for some $\beta \in (0,1)$. Then the density of $(f(0),\nabla f(0))$ is continuous and bounded.
\end{proposition}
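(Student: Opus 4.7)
My plan is to show that the characteristic function $\phi$ of $(f(0),\nabla f(0)) \in \R^3$ lies in $L^1(\R^3)$; by Fourier inversion this automatically yields a continuous, bounded density. Writing $\xi = (\xi_0,\xi_1,\xi_2) \in \R^3$ and
\[ F(x;\xi) = \xi_0 g(x) + \xi_1 \partial_1 g(x) + \xi_2 \partial_2 g(x) , \]
the Campbell formula for the marked Poisson point process (applicable since $g,\nabla g \in L^1(\R^2)$ and $\mu$ has finite mean) gives
\[ \phi(\xi) = \exp\bigg( \int_{\R^2} \big(\hat\mu(F(x;\xi)) - 1\big) dx \bigg) , \qquad |\phi(\xi)| = e^{-\Psi(\xi)} , \]
where $\hat\mu(t) = \E_\mu[e^{iYt}]$ and $\Psi(\xi) = \int_{\R^2} \E_\mu[1-\cos(YF(x;\xi))] \, dx \ge 0$. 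The task reduces to showing $\int_{\R^3} e^{-\Psi(\xi)} d\xi < \infty$.

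The key quantitative input is a local quadratic lower bound: since $\mu \neq \delta_0$, there exist $c,\delta>0$ with $\E_\mu[1-\cos(Ys)] \ge c\, s^2$ for all $|s| \le \delta$. This follows by Taylor-expanding $1-\cos$ on the event $\{|Y|\le M\}$ for any sufficiently large $M$, noting that $\E_\mu[Y^2 \id_{|Y|\le M}] > 0$ because $\mu$ is not concentrated at the origin. Let $R_0(\xi)$ be the smallest radius such that $|F(x;\xi)| \le \delta$ whenever $|x| \ge R_0(\xi)$; this exists because $g$ and $\nabla g$ vanish at infinity. Then
\[ \Psi(\xi) \ge c \int_{|x| \ge R_0(\xi)} F(x;\xi)^2 \, dx , \]
and it remains to compute this integral for each kernel family.

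For the power-law kernel with $\alpha > 2$, one has $|g(x)| \lesssim |x|^{-\alpha}$ and $|\nabla g(x)| \lesssim |x|^{-\alpha-1}$, so taking $R_0(\xi) \asymp \max(|\xi_0|^{1/\alpha}, |\eta|^{1/(\alpha+1)})$ with $\eta = (\xi_1,\xi_2)$ and computing the tail $L^2$-norm of $F$ yields
\[ \Psi(\xi) \gtrsim |\xi_0|^{2/\alpha} + |\eta|^{2/(\alpha+1)} , \]
whose stretched-exponential decay makes $|\phi|$ integrable on $\R^3$. For the stretched-exponential kernels, $|g(x)|$ and $|\nabla g(x)|$ both decay like $e^{-c|x|^\beta}$ up to polynomial factors, so choosing $R_0(\xi) \asymp (\log|\xi|)^{1/\beta}$ and performing an analogous tail computation gives
\[ \Psi(\xi) \gtrsim (\log|\xi|)^{(2-\beta)/\beta} . \]
I expect the main obstacle to be this last scaling estimate: integrability over $\R^3$ requires $|\phi(\xi)|$ to decay faster than $|\xi|^{-3}$, which is borderline when $\Psi$ grows only polylogarithmically. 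The hypothesis $\beta \in (0,1)$ enters precisely here, as it yields $(2-\beta)/\beta > 1$, making $|\phi(\xi)| \le \exp\big(-(\log|\xi|)^{(2-\beta)/\beta}\big)$ decay faster than any polynomial in $|\xi|$.
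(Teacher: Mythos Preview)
Your proposal is correct and follows the same overall strategy as the paper: show that the characteristic function of $(f(0),\nabla f(0))$ lies in $L^1(\R^3)$ by bounding the exponent $\Psi(\xi)$ from below via quadratic estimates on the tail region $\{|x|\ge R_0(\xi)\}$, then splitting into cases according to which of $|\xi_0|$ or $|\eta|$ dominates. Your organization is somewhat cleaner than the paper's. The paper restricts the mark to an interval $[m_0,m_0+1)$ of positive $\mu$-mass and uses the Bessel-function identity $\int_0^{2\pi}\cos(a+b\cos\theta)\,d\theta = 2\pi\cos(a)J_0(b)$ to separate the angular integral into two pieces $I_1,I_2$, each then bounded quadratically. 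You instead apply the quadratic bound $\E_\mu[1-\cos(Ys)]\ge cs^2$ first and then compute $\int_{|x|\ge R_0} F(x;\xi)^2\,dx$ directly; for radial $g$ the cross term $2\xi_0\,g(x)\,(\eta\cdot\nabla g(x))$ vanishes upon angular integration, so the integral splits as $2\pi\xi_0^2\int h(r)^2 r\,dr + \pi|\eta|^2\int h'(r)^2 r\,dr$, which is exactly the paper's $I_1+I_2$ without the Bessel machinery. You should make that orthogonality explicit, since it is the step that replaces the paper's Bessel identity. After that, your case split on $R_0=\max(|\xi_0|^{1/\alpha},|\eta|^{1/(\alpha+1)})$ and the final integrability checks (including the observation that $(2-\beta)/\beta>1$ is exactly where $\beta<1$ is used) match the paper's treatment of its domains $D_1,D_2$.
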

  
\begin{proof}
By the Riemann-Lebesgue lemma, the density of $(f(0),\nabla f(0))$ being continuous and bounded is implied by the integrability of its characteristic function 
\[ \varphi(u,v)=\exp \Big(\int_{}\left(\exp (im[ug(x)+ \langle v, \nabla g(x) \rangle ] )-1\right)dx\mu (dm) \Big) ,u\in \mathbb{R},v\in \mathbb{R}^{2} . \]
In fact if the mark distribution $\mu$ is symmetric, and so $\varphi$ takes values in $[0,1]$, these are equivalent. So it suffices to prove that $|\varphi(u,v)|$ is integrable on $\mathbb{R}\times \mathbb{R}^{2}$. 

Let $u>0,v\neq 0$. Introduce a parameter $\rho = \rho(u,v) >0$ whose value will be fixed later, and define $u_{\theta }=(\cos(\theta ),\sin(\theta )),\theta \in [0,2\pi ]$. Without loss of generality there is a $m_0>0$ such that $\mu ([m_{0},m_{0}+1 ))>0$ (if $\mu$ is supported on $(-\infty ,0)$ we can redefine $\mu \mapsto -\mu$). Since $|\!\cos(s)| \le 1$, for any domain $D\subset \mathbb{R}^{3}$ we have $\int_{\mathbb{R}^{3}}(\cos(\dots )-1)\le \int_{D}(\cos(\dots )-1).$ Hence, setting
 \[D= [m_{0},m_{0}+1)\times (\rho ,\infty )\times [0,2\pi ) ,\]
 in polar coordinates,  we have  
\begin{align}
\label{e:phibound}
  |\varphi (u,v)| 
\le &\exp\left(
\int_{m_{0}}^{m_{0}+1}\int_{\rho}^{\infty }\int_{0}^{2\pi }\left( \cos\left(
umh(r )+ \langle v,  mu_{\theta }h'(r) \rangle
\right)-1
\right) r drd\theta \mu (dm)
\right),
\end{align}
where $h(|x|) = g(x)$ (well-defined since $g$ is isotropic in all the cases we consider). For later use we note that $h$ and $h'$ are continuous and strictly decreasing on $(r_{0},\infty )$ for some $r_{0}>0$, and denote their inverse functions by $h^{\{-1\}},(h')^{\{-1\}}$. Recall the order-$0$ Bessel function, defined for $s \ge 0$ by
\[ J_{0}(s)=\frac{ 1}{2\pi }\int_{0}^{2\pi }\cos(s\cos(\theta ))d\theta=\frac{ 1}{2\pi }\int_{0}^{2\pi }\cos(s\sin(\theta ))d\theta . \]
Then, since $\int_{0}^{2\pi }\sin(s\cos(\theta ))d\theta =0$ for all $s\in \mathbb{R}$, the right-hand side of \eqref{e:phibound} equals
\begin{align*}
 &\exp \Big( \int_{m_{0}}^{m_{0}+1}
\int_{\rho}^{\infty } (  \cos(umh(r ))2\pi J_{0}(m|v||h'(r )|)-2\pi ) rdr\mu (dm) \Big)\\
& \qquad =\exp \bigg(
\underbrace{2\pi\int_{m_{0}}^{m_{0}+1}\int_{\rho}^{\infty } (\cos( umh(r ))-1)rdr \mu(dm)}_{I_{1}} \\
&\hspace{2.5cm} +\underbrace{2\pi  \int_{m_{0}}^{m_{0}+1}\int_{\rho}^{\infty }\cos{( umh(r ))}(J_{0}(m|v| | h'(r ) | )-1 )rdr\mu (dm)}_{I_{2}} \bigg).
\end{align*}
Let us bound $I_1$ and $I_2$ separately. In the rest of the proof, $\bar c,\bar c' > 0$ denote constants (depending only on $\alpha, \beta, m_{0}$) whose value might change from line to line. 

For $I_1$, recall that $\cos(s)-1\le -  s^{2}/3$ and $\cos(s)>1/2$ for $0<s \le s_{0}$, where $s_0$ is some positive constant. 
Define $u_{0} = s_0 / (m_0 + 1)>0$ and for $u\in \mathbb{R}$
\begin{align*}
\rho _{u} & =\inf\{\rho \ge r_{0} : |u|(m_{0}+1)h(r ) \le s_{0}  \text{ for all }  r \ge \rho  \} \\
&=h^{\{-1\}}(u_{0}/ | u | ).
\end{align*} 
We deduce that, for $\rho \ge \rho _{u}$,
\begin{align*}
I_{1}\le -\bar c |u|^{2}\int_{\rho }^{\infty }h(r)^{2}rdr.
\end{align*}
For $I_{2}$, we first recall that $J_{0}(t)-1 \le -c t^{2}$ for $0\le t\le t_{0}$, where $t_0$ is some positive constant, because $J_{0}$ is twice differentiable with $J_{0}(0)=1 ,J_{0}'(0)=0,J_{0}''(0)<0$.  Define $\rho '_{v},u_{0}'>0$ via
\[ \rho' _{v}=\inf\{\rho \ge r_{0} : |v|(m_{0}+1)h'(r) \le t_{0}  \text{ for all }  r\ge \rho \}=(h')^{\{-1\}}(u_{0}'/ | v | ). \] 
Then we have, for $\rho \ge \max\{\rho _{u},\rho' _{v}\}$,
\begin{align*}
 I_{2}
& \le -\bar c |v|^{2}\int_{\rho }^{\infty }h'(r)^{2}rdr.
\end{align*}

Recall that it is sufficient to prove that $|\varphi(u,v)|$ is integrable on $\mathbb{R}\times \mathbb{R}^{2}$. Let $u_{1}=1$ and $u_{1}'>0$ be such that $\rho _{u_{1}}=\rho '_{u_{1}'}>0$. We will actually prove that $a_{i}=\int_{D_{i}}|\varphi (u,v)|<\infty$ for $i=1,2$, where 
\begin{align*}
D_{1}=& \{u\in \mathbb{R},v\in \mathbb{R}^{2}:\;u_{1}'\le |v|,\;\rho _{u}\le \rho' _{v}\}\\
D_{2}=& \{u\in \mathbb{R},v\in \mathbb{R}^{2}:\;u_{1}\le |u|,\;\rho' _{v}\le \rho _{u}\}  
\end{align*}
and that $(D_{1}\cup D_{2})^{c}\subset D_{3}:=\{(u,v):  | u | \le u_{1},|v|\le u_{1}'\}$.
 For the latter point, let $(u,v)\notin D_{3}$. If $ | u | \le u_{1},$ then $|v|>u_{1}'$, and by strict monotonicity $$\rho _{u}\le \rho _{u_{1}}=\rho _{u_{1}'}'<\rho _{v}',$$  hence $(u,v)\in D_{2}$. If on the other hand $ | v | \le u_{1}',$ then similarly $ | u | >u_{1}$ and $\rho '_{v}<\rho _{u},$ hence $(u,v)\in D_{1}.$ 

It remains to show $a_{i}<\infty$ for $i=1,2$. Let us first treat the case $h(r)=(1+r)^{-\alpha }$, which yields
for $\rho \ge \max\{\rho _{u},\rho _{v}'\}$\begin{align*}
I_{1}\le -\bar cu^{2}\rho ^{2-2\alpha } \quad \text{and} \quad I_{2}\le -\bar c'|v|^{2}\rho ^{-2\alpha } .
\end{align*}

 For $|u|>u_{1} ,|v|>u_{1}'$, define 
\begin{align*}
\rho_{u,v}  =\begin{cases}
 \bar c'|u|^{1/\alpha }$ if $\rho _{u}>\rho _{v}'\\
 \bar c' | v | ^{\frac{1}{\alpha +1}}$ if $\rho _{v}'>\rho _{u}
\end{cases} 
\end{align*}
with $\bar c'$ chosen so that $\rho_{u,v} \ge \max\{\rho _{u},\rho _{v}'\}.$ We have
\[ |\varphi(u,v)|  \le  \exp(-\bar c u^{2}\rho_{u,v} ^{2-2\alpha } -\bar c' |v|^{2}\rho_{u,v} ^{-2\alpha }  ) .\]
Therefore 
\begin{align*}
a_{1} & \le \int_{D_{1}}\exp(-\bar c   |u|^{2 }\rho _{u,v}^{2-2\alpha }-\bar c  |v|^{2}\rho _{u,v}^{-2\alpha })\id_{\{\rho _{u,v}= \bar c' |v|^{\frac{1}{1+\alpha }}\}}dudv\\
& \le  \int_{ \{ | v | >u_{1}'\}}\exp(-\bar c  |u|^{2  }|v|^{\frac{2-2\alpha }{1+\alpha }}-\bar c  |v|^{2-2\frac{\alpha }{1+\alpha }}) dudv\\
a_{2} & \le \int_{D_{2}}\exp(-\bar c  |u|^{2 }\rho_{u,v} ^{2-2\alpha }-\bar c  |v|^{2}\rho_{u,v} ^{-2\alpha })\id_{\{\rho =\bar  c' |u|^{\frac{1}{\alpha }}\}}dudv\\
& \le \int_{ \{ | u | >u_{1}\}}\exp(-\bar c |u|^{2/\alpha } -\bar  c  |v|^{2}|u|^{-2})dudv.
\end{align*}
Since it is easy to check that these integrals are finite, the proof is complete in the case~\eqref{e:power}. The smoothed modification $h(r)=(1+r^{2})^{-\alpha /2}$ can be treated similarly as only the behaviours of $h$ and $h'$ at infinity matter.

In the case \eqref{e:exp},
define $\rho=\rho _{u,v} $  by
\[  \begin{cases}e ^{-\rho^{\beta }}= 
\bar c'/|u| $ \quad \quad \quad if $\rho _{u}>\rho _{v}\\
\rho ^{\beta -1}e^{-\rho  ^{\beta }}= \bar c'/|v| $  \quad otherwise$
\end{cases}\] 
with $\bar c'$ such that  $\rho>\bar c'\max\{\rho _{u},\rho _{v}\}.$ We have
\begin{align*}
I_{1} \le & -\bar cu^{2}\int_{\rho }^{\infty }re^{-2r^{ \beta }}dr \le -\bar c'u^{2}\rho ^{2-\beta }e^{-2\rho ^{\beta }}\\
I_{2} \le &-\bar c|v|^{2}\int_{\rho }^{\infty }r^{2\beta -1}e^{-2r^{\beta }}dr \le -\bar c'|v|^{2}\rho ^{\beta }e^{-2\rho ^{\beta }}
\end{align*}
as soon as  $|u|>u_{1}, |v|>u_{1}'$. 
On $D_{1}, e^{-\rho ^{\beta }} = \bar c'/|u|$, and so
\begin{align*}
a_{1}  & \le \int_{D_{1}}\exp(-\bar cu^{2}\rho ^{2-\beta }u^{-2}-\bar c|v|^{2}\rho ^\beta u^{-2})dudv\\
& \le \int_{ \{ | u | >u_{1}\} }\exp(-\bar c\rho ^{2-\beta }) \bigg(
\int_{0}^{\infty }\exp \bigg(
-\bar c \bigg(
\frac{|v|}{u\rho ^{-\beta /2}}
\bigg)^{2} \bigg)dv
\bigg) du \\
& \le \bar c\int_{ \{ | u | >u_{1}\} }\exp(-\bar c\rho ^{2-\beta })u \rho ^{-\beta /2}du.
\end{align*}
Since $2-\beta >\beta $ (recall we assume $\beta \in (0,1)$), 
\[ \exp(-\bar c\rho ^{2-\beta })u =\exp(-\bar c\rho ^{2-\beta }+\bar c'\rho ^{\beta })\le \bar  c'' \exp(-2\rho ^{\beta })=\bar c''u^{-2} , \]
and the term $\rho ^{-\beta /2}$ is logarithmic in $u$, hence $a_{1}<\infty$. 

On $D_{2},e^{-\rho ^{\beta }}=\bar c'\rho ^{1-\beta }/|v|$, and so
\begin{align*}
a_{2}  \le \int_{D_{2}}\exp(-\bar cu^{2}\rho ^{4-3\beta }|v|^{-2}-\bar c\rho ^{2-\beta })dudv \le \int_{\{ | v | >u_{1}'\} }\exp(-\bar c\rho ^{2-\beta })|v|\rho ^{-2+\frac{3\beta }{2}}dv.
  \end{align*}
As before, $\exp(-\bar c\rho^{2-\beta } )$ is dominated by $\exp(-\bar c'\rho^{\beta })$ for any $\bar c'>0$ and hence by any negative power of $|v|$, and the integral is finite.  Again the smoothed modification $h(r)=\exp(-(1+r^{2})^{\beta /2})$ can be treated similarly as only the behaviours of $h$ and $h'$ at infinity matter.
 \end{proof}

\bigskip

\bibliographystyle{halpha-abbrv}
\bibliography{sn}

\begin{thebibliography}{OSSS05}
\expandafter\ifx\csname url\endcsname\relax
  \def\url#1{\texttt{#1}}\fi
\expandafter\ifx\csname doi\endcsname\relax
  \def\doi#1{\burlalt{doi:#1}{http://dx.doi.org/#1}}\fi
\expandafter\ifx\csname urlprefix\endcsname\relax\def\urlprefix{URL }\fi
\expandafter\ifx\csname href\endcsname\relax
  \def\href#1#2{#2}\fi
\expandafter\ifx\csname burlalt\endcsname\relax
  \def\burlalt#1#2{\href{#2}{#1}}\fi

\bibitem[AB18]{ab18}
D.~Ahlberg and R.~Baldasso.
\newblock Noise sensitivity and {V}oronoi percolation.
\newblock {\em Electron. J. Probab.}, 23, 2018.

\bibitem[Ale96]{alex96}
K.~Alexander.
\newblock Boundedness of level lines for two-dimensional random fields.
\newblock {\em Ann. Probab.}, 24(4):1653--1674, 1996.

\bibitem[ATT18]{att18}
D.~Ahlberg, V.~Tassion, and A.~Teixeira.
\newblock Sharpness of the phase transition for continuum percolation.
\newblock {\em Probab. Theory Related Fields}, 172(1--2):525--281, 2018.

\bibitem[BB09]{Bac10a}
F.~Baccelli and B.~B{\l}aszczyszyn.
\newblock {\em Stochastic geometry and wireless networks: Volume I Theory}.
\newblock Now Publishers, Inc., 2009.

\bibitem[BB10]{Bac10}
F.~Baccelli and B.~B{\l}aszczyszyn.
\newblock {\em Stochastic geometry and wireless networks: Volume II
  Applications}.
\newblock Now Publishers, Inc., 2010.

\bibitem[BB15]{BB15}
F.~Baccelli and A.~Biswas.
\newblock On scaling limits of power law shot-noise fields.
\newblock {\em Stochastic Models}, 31(2):187--207, 2015.

\bibitem[BD16]{BieDes}
H.~Bierm\'e and A.~Desolneux.
\newblock On the perimeter of excursion sets of shot noise random fields.
\newblock {\em Ann. Probab.}, 44(1):521--543, 2016.

\bibitem[BD20]{BieDesEuler}
H.~Bierm\'e and A.~Desolneux.
\newblock Mean geometry for {2D} random fields: level perimeter and level total
  curvature integrals.
\newblock {\em Ann. Appl. Probab.}, 30(2):561--607, 2020.

\bibitem[BG17]{bg17}
V.~Beffara and D.~Gayet.
\newblock Percolation of random nodal lines.
\newblock {\em Publ. Math. IHES}, 126:131--176, 2017.

\bibitem[BKS99]{bks99}
I.~Benjamini, G.~Kalai, and O.~Schramm.
\newblock Noise sensitivity of {B}oolean functions and applications to
  percolation.
\newblock {\em Publ. Math. IHES}, 90(1):5--43, 1999.

\bibitem[BM17]{bm17}
E.~Broman and R.~Meester.
\newblock Phase transition and uniqueness of levelset percolation.
\newblock {\em J. Stat. Phys.}, 167(6):1376--1400, 2017.

\bibitem[BR06]{bo06}
B.~Bollob{\'a}s and O.~Riordan.
\newblock {\em Percolation}.
\newblock Cambridge University Press, 2006.

\bibitem[BST12]{BST}
A.~Bulinski, E.~Spodarev, and F.~Timmermann.
\newblock Central limit theorems for the excursion set volumes of weakly
  dependent random fields.
\newblock {\em Bernoulli}, 18(1):100--118, 2012.

\bibitem[Cam09]{campbell}
N.~Campbell.
\newblock The study of discontinuous phenomena.
\newblock {\em Proc. Cambridge. Philos. Soc.}, 15:117--136, 1909.

\bibitem[GKR88]{gkr88}
A.~Gandolfi, M.~Keane, and L.~Russo.
\newblock On the uniqueness of the infinite occupied cluster in dependent
  two-dimensional site percolation.
\newblock {\em Ann. Probab.}, 16(3):1147--1157, 1988.

\bibitem[Gri99]{gr99}
G.~Grimmett.
\newblock {\em Percolation}.
\newblock Springer: Berlin, Germany, 1999.

\bibitem[Han02]{han02}
D.~Handron.
\newblock Generalized billiard paths and morse theory for manifolds with
  corners.
\newblock {\em Topology Appl.}, 126(1-2):83--118, 2002.

\bibitem[Har60]{har60}
T.~Harris.
\newblock A lower bound for the critical probability in a certain percolation
  process.
\newblock {\em Proc. Camb. Phil. Soc.}, 56:13--20, 1960.

\bibitem[Kes80]{kes80}
H.~Kesten.
\newblock The critical probability of bond percolation on the square lattice
  equals {$1/2$}.
\newblock {\em Commun. Math. Phys.}, 74:41--59, 1980.

\bibitem[Kes82]{kes82}
H.~Kesten.
\newblock {\em Percolation theory for mathematicians}.
\newblock Progress in Probability and Statistics Vol. 2. Springer, 1982.

\bibitem[KT20]{kt20}
L.~K\"{o}hler-Schindler and V.~Tassion.
\newblock Crossing probabilities for planar percolation.
\newblock {\em arXiv preprint arXiv:2011.04618}, 2020.

\bibitem[LR19]{Lr19}
R.~Lachi\`eze-Rey.
\newblock Normal convergence of nonlocalised geometric functionals and
  shot-noise excursions.
\newblock {\em Ann. Appl. Probab.}, 29(5):2613--2653, 2019.

\bibitem[MR96]{mr96}
R.~Meester and R.~Roy.
\newblock {\em Continuum Percolation}.
\newblock Cambridge University Press, Cambridge, 1996.

\bibitem[MS83]{molchanov1983percolationii}
S.~Molchanov and A.~Stepanov.
\newblock Percolation in random fields. {II}.
\newblock {\em Theor. Math. Phys.}, 55(3):592--599, 1983.

\bibitem[MV20]{mv18}
S.~Muirhead and H.~Vanneuville.
\newblock The sharp phase transition for level set percolation of smooth planar
  {G}aussian fields.
\newblock {\em Ann. I. Henri Poincar\'e Probab. Stat.}, 56(2):1358--1390, 2020.

\bibitem[OSSS05]{o05}
R.~O'Donnell, M.~Saks, O.~Schramm, and R.~Servedio.
\newblock Every decision tree has an influential variable.
\newblock In {\em 46th Annual IEEE Symposium on Foundations of Computer Science
  (FOCS'05)}, pages 31--39, 2005.

\bibitem[RB03]{Rb}
P.~Reynaud-Bouret.
\newblock Adaptative estimation of the intensity of inhomogeneous poisson
  processes via concentration inequalities.
\newblock {\em Probab. Theory Related Fields}, 126:103--153, 2003.

\bibitem[RV19]{rv17a}
A.~Rivera and H.~Vanneuville.
\newblock Quasi-independence for nodal lines.
\newblock {\em Ann. Henri Poincar{\'e}}, 55(3):1679--1711, 2019.

\bibitem[RV20]{rv17b}
A.~Rivera and H.~Vanneuville.
\newblock The critical threshold for {B}argmann-{F}ock percolation.
\newblock {\em Ann. Henri Lebesgue}, 3:169--215, 2020.

\bibitem[SS10]{ss10}
O.~Schramm and J.~Steif.
\newblock Quantitative noise sensitivity and exceptional times for percolation.
\newblock {\em Ann. Math.}, 171(2):619--672, 2010.

\bibitem[Tas16]{tassion2014crossing}
V.~Tassion.
\newblock Crossing probabilities for {V}oronoi percolation.
\newblock {\em Ann. Probab.}, 44(5):3385--3398, 2016.

\end{thebibliography}

\end{document}